\theoremstyle{plain}
\newtheorem{proposition}{Proposition}[section]
\newtheorem{theorem}[proposition]{Theorem}
\newtheorem{lemma}[proposition]{Lemma}
\newtheorem{corollary}[proposition]{Corollary}
\theoremstyle{definition}
\newtheorem{definition}[proposition]{Definition}
\newtheorem{observation}[proposition]{Observation}
\theoremstyle{remark}
\newtheorem{remark}[proposition]{Remark}
\newtheorem{question}[proposition]{Question}
\DeclareMathOperator{\Aut}{Aut}
\DeclareMathOperator{\supp}{supp}
\DeclareMathOperator{\id}{id}
\DeclareMathOperator{\injrad}{inj-rad}
\DeclareMathOperator{\dist}{d}
\DeclareMathOperator{\Cc}{\mathcal{C}}
\DeclareMathOperator{\Fc}{\mathcal{F}}
\DeclareMathOperator{\Nc}{\mathcal{N}}
\DeclareMathOperator{\Oc}{\mathcal{O}}
\DeclareMathOperator{\Tc}{\mathcal{T}}
\DeclareMathOperator{\Uc}{\mathcal{U}}
\DeclareMathOperator{\Vc}{\mathcal{V}}
\DeclareMathOperator{\Pc}{\mathcal{P}}
\DeclareMathOperator{\Sc}{\mathcal{S}}
\DeclareMathOperator{\Cb}{\mathbb{C}}
\DeclareMathOperator{\Nb}{\mathbb{N}}
\DeclareMathOperator{\Rb}{\mathbb{R}}
\DeclareMathOperator{\Sb}{\mathbb{S}}
\newcommand{\abs}[1]{\left|#1\right|}
\newcommand{\norm}[1]{\left\|#1\right\|}
\newcommand{\ip}[1]{\left\langle #1\right\rangle}
\begin{document}

\title[Families of strongly pseudoconvex domains]{Smooth equivalence of families of strongly pseudoconvex domains}

\author[]{Herv\'{e} Gaussier,$^1$ Xianghong Gong,$^2$ \and Andrew Zimmer$^3$}
\address{H. Gaussier:
Univ. Grenoble Alpes, CNRS, IF, F-38000 Grenoble, France
}
\email{herve.gaussier@univ-grenoble-alpes.fr}
\address{X. Gong:
Department of Mathematics,
University of Wisconsin-Madison, Madison, WI 53706, U.S.A.}
\email{gong@math.wisc.edu}

\address{A. Zimmer:
Department of Mathematics,
University of Wisconsin-Madison, Madison, WI 53706, U.S.A.}
\email{amzimmer2@wisc.edu}

\thanks{$^1$Partially supported by  ERC ALKAGE. \\
\indent $^2$Partially supported by  NSF grant DMS-2054989. \\
\indent $^3$Partially supported by a Sloan research fellowship and NSF grants DMS-2105580 and DMS-2104381.}
\date{\today}

 \keywords{Isometry groups, families of Riemannian metrics, strongly pseudoconvex domains,  automorphisms of domains, smooth deformation of domains}
 \subjclass[2020]{32T15,   53B20,32G05,32H40}

\begin{abstract}

We establish a smoothness result for families of biholomorphisms between smooth families of strongly pseudoconvex domains, each with trivial biholomorphism group. This is accomplished by considering the Riemannian geometry of their Bergman metrics and proving a result about the smoothness of families of isometries between smooth families of Riemannian manifolds.

\end{abstract}

\maketitle

\sloppy

\section{Introduction}

In this paper we study families of complex manifolds $\mathcal{M}=\{M_t\}$ that depend smoothly on a parameter $t\in \Tc$. This is a rich theory with many deep results including Kodaira--Spencer's deformation theory for compact complex manifolds and complex spaces with singularity (see~\cite{GLS-deformation, Kodaira-deformation} and the references therein), Newlander--Nirenberg's~\cite{NN} and Nijenhuis--Woolf's~\cite{NW} works on deformation of complex structures, and Hamilton's~\cite{Hamilton3} Newlander--Nirenberg type theorem for families of compact domains with boundary.

We are particularly interested in studying the regularity properties of  families of biholomorphisms  between two families of complex manifolds and more specifically the following question.

\begin{question}\label{main question} Suppose $\mathcal{M}=\{M_t\}$ and $\hat{\mathcal{M}}=\{\hat{M}_t\}$ are two families of complex manifolds  that depend smoothly on a parameter $t\in \Tc$ (where $\Tc$ is an open set in a smooth manifold). If for every $t \in \Tc$ the  manifolds   $M_t$ and $\hat{M}_t$ are biholomorphic, then is it possible to find a family of biholomorphisms
$\left\{F(\cdot,t) : M_t \rightarrow \hat{M}_t\right\}$  such that
 $F$ is smooth?
\end{question}

One can also ask the same question for other structures on manifolds, like smooth families of Riemannian metrics and families of  isometries instead of biholomorphisms.

In this paper, we consider strongly pseudoconvex domains in complex Euclidean space. A smooth family of strongly pseudoconvex domains can be defined precisely as follows.

\begin{definition}\label{defn: families of str pconvex domains}
A family $\{\Omega_t\}_{t \in \mathcal T}$ of smoothly bounded strongly pseudoconvex domains in $\mathbb C^d$ is \emph{smooth} if for every $t_0 \in \Tc$ there are  a neighborhood $\Tc_0$ of $t_0$ in $\Tc$,  a neighborhood $\mathcal U$ of $\overline{\Omega_{t_0}}$,
 and a $\mathcal C^{\infty}$  smooth map $\Phi : \mathcal U\times\mathcal T_0 \rightarrow \mathbb C^d$ such that for every $t \in \mathcal T_0$ the map $\Phi(\cdot, t) : \mathcal U \rightarrow \Cb^d$ is a diffeomorphism onto its image and $\Phi(\cdot, t)(\overline{\Omega_{t_0}}) = \overline{\Omega_t}$.
\end{definition}

Previously, the first two authors~\cite{GG2020}  answered  Question~\ref{main question} affirmatively in the special case of smoothly bounded domains in $\Cb$ (where every smoothly bounded domain is strongly pseudoconvex). They also considered families of strongly pseudoconvex domains in higher dimensions,   and using techniques from several complex variables proved   results about the continuity of families of biholomorphisms.

In this paper, we provide a complete answer to Question~\ref{main question} for rigid domains (i.e. domains with trivial biholomorphism group). Our approach is geometric in nature and our result for strongly pseudoconvex domains will be a consequence of a general result about smoothly varying families of Riemannian metrics. To apply this general result to our
 setting, we use   the Bergman metrics of the domains. Our main   results are  as follows.

\begin{theorem}[see Theorem~\ref{thm:smoothness of family of biholomorphisms in paper} below]
\label{thm:smoothness of family of biholomorphisms}
Suppose $\{\Omega_t\}_{t \in \Tc}$ and $\{\hat{\Omega}_t\}_{t \in \Tc}$ are two smooth families of strongly pseudoconvex domains in $\Cb^d$. If for each $t \in \Tc$
\begin{enumerate}\renewcommand{\labelenumi}{$(\alph{enumi})$}
  \item the biholomorphism group $\mathsf{Aut}(\Omega_t)$ is trivial, and
\item there  is a biholomorphism $F_t :\Omega_t \rightarrow \hat{\Omega}_t$,
\end{enumerate}
then the map
$$
(p,t)  \mapsto F_t(p)
$$
is smooth.
\end{theorem}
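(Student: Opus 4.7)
The plan is to convert the complex-analytic problem into a Riemannian one via Bergman metrics and then invoke the paper's general theorem on smooth families of isometries between smooth families of Riemannian manifolds. The triviality hypothesis on $\mathsf{Aut}(\Omega_t)$ will supply the rigidity needed to pass from the pointwise-in-$t$ data $\{F_t\}$ to smoothness in $t$.

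Fix $t_0 \in \Tc$ and shrink $\Tc$ so that Definition~\ref{defn: families of str pconvex domains} produces smooth maps $\Phi, \hat\Phi : \Uc \times \Tc \to \Cb^d$ with $\Phi(\cdot,t)$ carrying $\overline{\Omega_{t_0}}$ onto $\overline{\Omega_t}$ and similarly for $\hat\Phi$. Define
$$
\tilde F(p,t) := \hat\Phi(\cdot, t)^{-1}\circ F_t \circ \Phi(p, t),
$$
so that smoothness of $(p,t) \mapsto F_t(p)$ is equivalent to smoothness of $\tilde F$. Let $g_t$ and $\hat g_t$ be the pullbacks by $\Phi(\cdot,t)$ and $\hat\Phi(\cdot,t)$ of the Bergman metrics of $\Omega_t$ and $\hat\Omega_t$; these are complete Riemannian metrics on $\Omega_{t_0}$ and $\hat\Omega_{t_0}$ respectively. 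Since biholomorphisms are Bergman isometries, each $\tilde F(\cdot, t) : (\Omega_{t_0}, g_t) \to (\hat\Omega_{t_0}, \hat g_t)$ is an isometry.

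Next, one verifies that $\{g_t\}$ and $\{\hat g_t\}$ are smooth families of Riemannian metrics on $\Omega_{t_0}$ and $\hat\Omega_{t_0}$; this reduces to smooth dependence of the Bergman kernel under a smooth parametric deformation of a strongly pseudoconvex domain, a parametric version of classical Bergman-kernel stability. The hypothesis $\mathsf{Aut}(\Omega_t) = \{e\}$ (and hence, via $F_t$, $\mathsf{Aut}(\hat\Omega_t) = \{e\}$) pins down $\tilde F(\cdot,t)$ as the unique biholomorphism, and after ruling out antiholomorphic isometries by an orientation argument, as the unique isometry between the two metric spaces. Applying the general smoothness theorem for families of isometries to $\{(\Omega_{t_0}, g_t)\}$ and $\{(\hat\Omega_{t_0}, \hat g_t)\}$ then yields smoothness of $\tilde F$.

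The main obstacle is the general Riemannian theorem itself. One expects its proof to proceed through the exponential maps of $g_t$ and $\hat g_t$, which vary smoothly in $t$: near a fixed basepoint $p_0$, an isometry $\tilde F_t$ is determined by the first-order jet $(\tilde F_t(p_0), d_{p_0}\tilde F_t)$, and the rigidity furnished by uniqueness of the isometry should upgrade the pointwise-in-$t$ choice of jet to a smooth one, after which $\tilde F_t = \exp^{\hat g_t}_{\tilde F_t(p_0)} \circ\, d_{p_0}\tilde F_t \circ (\exp^{g_t}_{p_0})^{-1}$ locally gives smoothness. A secondary technical point is that the Bergman metric degenerates at the boundary of $\Omega_t$, but completeness ensures that isometries are already determined by their behavior on compact subsets of $\Omega_{t_0}$, sidestepping any boundary analysis in the final step.
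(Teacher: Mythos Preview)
Your overall strategy---pull back the Bergman metrics via $\Phi,\hat\Phi$ to get smooth families of complete real analytic metrics on fixed domains, observe that each $\tilde F_t$ is an isometry, then invoke the general Riemannian theorem---matches the paper's approach. Two points need correction.

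First, the ``orientation argument'' to rule out antiholomorphic isometries does not work: in $\Cb^d$ with $d$ even, complex conjugation preserves orientation, so an antiholomorphic self-map need not be detectable this way. More fundamentally, you have not explained why an arbitrary Bergman isometry of a strongly pseudoconvex domain must be holomorphic or antiholomorphic; this is a nontrivial theorem of Greene--Krantz. The paper uses exactly that result, but only to conclude that $\mathsf{Isom}(\Omega_t,g_t)$ has order at most two and is therefore \emph{discrete}---which is all the general Riemannian theorem (Theorem~\ref{thm:continuity implies smoothness in riemannian case in intro}) requires. You do not need uniqueness of the isometry.

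Second, you have omitted the continuity hypothesis. The general theorem assumes as input that $(p,t)\mapsto \tilde F_t(p)$ is continuous; it does not produce continuity from uniqueness. For smooth families of strongly pseudoconvex domains this continuity is a separate result established in~\cite{GG2020} using several-complex-variables techniques, and the paper cites it explicitly before applying Theorem~\ref{thm:continuity implies smoothness in riemannian case in intro}. Without this step your argument has a genuine gap: the passage from ``unique isometry for each $t$'' to ``smooth 1-jet in $t$'' that you sketch in the final paragraph presupposes at minimum continuity of the jet, and the paper's mechanism for upgrading continuity to smoothness (real analyticity of each $g_t$, the local Noetherian property of analytic varieties, and the implicit function theorem) is considerably more delicate than the exponential-map formula alone.
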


\begin{remark} Once smoothness is known on the interior, results in~\cite{GG2020} imply that the map $(p,t)  \mapsto F_t(p)$ extends to a smooth map on the closure
$$
\bigcup_{t \in \Tc} \overline{\Omega_t} \times \{t\} \subset \Cb^d \times \Tc.
$$
The results in~\cite{GG2020} were proved for $\Tc=[0,1]$, however, the proofs hold for any open set $\Tc$ in a smooth manifold.
\end{remark}

The triviality assumption on the biholomorphism groups is crucial in Theorem~\ref{thm:smoothness of family of biholomorphisms}. Indeed, in Section~\ref{sec: examples and conjecture} we show that any strongly pseudoconvex domain with non-trivial automorphism group is contained in two smooth families of strongly pseudoconvex domains which are pairwise biholomorphic, but there is no continuous family of biholomorphisms between the families. More precisely, we prove the following.

 \begin{proposition}[see Proposition~\ref{Omega 1/k} below]\label{Omega 1/k in intro}
Let $D$ be a strongly pseudoconvex domain in $\mathbb C^d$. If $\Aut(D) \neq \{\id_D\}$, then there are two smooth families  $\{\Omega_t\}_{t \in (-1,1)}$, $\{\hat \Omega_t\}_{t \in (-1,1)}$ of strongly pseudoconvex
  domains where $\Omega_0 = D = \hat \Omega_0$, $\Omega_t$ is biholomorphic to $\hat \Omega_t$ for all $t \in (-1,1)$,  and any family of biholomorphisms $\{ F_t : \Omega_t \rightarrow \hat \Omega_t\}_{t \in (-1,1)}$ is discontinuous at $t=0$.
\end{proposition}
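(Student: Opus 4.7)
The plan is to construct $\Omega_t$ and $\hat\Omega_t$ as bumped copies of $D$ so that $\Omega_t=\hat\Omega_t$ for $t\ge0$ (making the identity a biholomorphism) and $\hat\Omega_t=\varphi(\Omega_t)$ for $t\le0$ (making $\varphi$ a biholomorphism), for a fixed $\varphi\in\Aut(D)\setminus\{\id_D\}$. If the bump can be chosen so that $\Aut(\Omega_t)=\{\id\}$ for all sufficiently small $t\ne0$, then any family of biholomorphisms $F_t:\Omega_t\to\hat\Omega_t$ is forced to equal $\id$ for $t>0$ and $\varphi$ for $t<0$, producing a jump at $t=0$.

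To set things up, I would fix $\varphi\in\Aut(D)\setminus\{\id_D\}$, which extends biholomorphically to a neighborhood of $\overline D$ by the usual boundary regularity of automorphisms of strongly pseudoconvex domains, and pick $p\in\partial D$ with $\varphi(p)\ne p$ (such $p$ exists because otherwise boundary uniqueness of holomorphic maps would force $\varphi=\id$). Choose a small neighborhood $U$ of $p$ with $U\cap\varphi(U)=\emptyset$, a nonnegative $\chi\in C^\infty_c(U)$ with $\chi(p)>0$, and a smooth defining function $\rho$ of $D$. Since $\rho\circ\varphi^{-1}$ is another defining function of $D$ near $\partial D$, I can write $\rho\circ\varphi^{-1}=h\rho$ with $h$ smooth and positive there, and set $\hat\chi:=(\chi\circ\varphi^{-1})/h$, a smooth function supported near $\varphi(p)$. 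With flat cutoffs $\lambda(t)=e^{-1/t^{2}}\mathbf{1}_{t>0}$ and $\mu(t)=e^{-1/t^{2}}\mathbf{1}_{t<0}$, I would take
\begin{equation*}
\Omega_t:=\bigl\{\rho-(\lambda(t)+\mu(t))\chi<0\bigr\},\qquad \hat\Omega_t:=\bigl\{\rho-\lambda(t)\chi-\mu(t)\hat\chi<0\bigr\}.
\end{equation*}
Both are smooth families of strongly pseudoconvex domains (for $t$ small) with $\Omega_0=\hat\Omega_0=D$, and the identity $\rho\circ\varphi^{-1}=h\rho$ gives $\Omega_t=\hat\Omega_t$ for $t\ge0$ and $\varphi(\Omega_t)=\hat\Omega_t$ for $t\le0$.

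The main obstacle is the rigidity claim $\Aut(\Omega_t)=\{\id\}$ for all sufficiently small $t\ne0$. When $D$ is not biholomorphic to $\Bb^d$, $\Aut(D)$ is a compact Lie group and the Greene--Krantz stability theorem for automorphism groups under smooth perturbation of strongly pseudoconvex domains shows that $\Aut(\Omega_t)$ is, after conjugation, a closed subgroup of $\Aut(D)$ for small $t$. A limit argument then reduces the claim to the following: for a generic choice of $\chi$, no nontrivial element of $\Aut(D)$ preserves the bump (i.e., maps the support of $\chi$ into itself while preserving its profile). This is a standard genericity statement since $\Aut(D)$ is finite-dimensional while the space of admissible bumps is infinite-dimensional. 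When $D$ is biholomorphic to $\Bb^d$, a preliminary CR-curvature comparison at a boundary point where the bump is felt shows $\Omega_t\not\simeq\Bb^d$ for small $t\ne0$, so $\Aut(\Omega_t)$ is compact by the Bun Wong--Rosay theorem, and the same argument applies.

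Given this rigidity, the conclusion is immediate: if $F_t:\Omega_t\to\hat\Omega_t$ is any family of biholomorphisms, then for $t>0$ small one has $F_t\in\Aut(\Omega_t)=\{\id\}$, so $F_t=\id_{\Omega_t}$, while for $t<0$ small $\varphi^{-1}\circ F_t\in\Aut(\Omega_t)=\{\id\}$, so $F_t=\varphi$. Taking one-sided limits at $t=0$ would force $F_0=\id_D=\varphi$, contradicting $\varphi\ne\id$; hence $F_t$ cannot be continuous at $t=0$.
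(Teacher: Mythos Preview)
Your overall strategy matches the paper's: build $\{\Omega_t\}$ as a deformation of $D$, set $\hat\Omega_t=\Omega_t$ for $t\ge0$ and $\hat\Omega_t=\varphi(\Omega_t)$ for $t\le0$, and force a jump by making the perturbed domains rigid. The divergence is in how rigidity is obtained, and that is where your argument has a genuine gap.

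You assert that for a generic bump $\chi$ one has $\Aut(\Omega_t)=\{\id\}$ for all small $t\ne0$, and you justify this by invoking the Greene--Krantz semicontinuity theorem plus a ``limit argument'' reducing to the claim that no nontrivial element of $\Aut(D)$ preserves the bump. This reduction is not valid as stated. Greene--Krantz gives an injection $\Aut(\Omega_t)\hookrightarrow\Aut(D)$ realized by conjugation with a diffeomorphism $\psi_t$ that is merely $C^\infty$-close to the identity; the elements of $\Aut(\Omega_t)$ are \emph{not} automorphisms of $D$ and need not interact with the bump in the way you suggest. If $g_{t_n}\in\Aut(\Omega_{t_n})$ are nontrivial and $g_{t_n}\to g\in\Aut(D)$ along a subsequence, then (i) nothing prevents $g=\id$ when $\Aut(D)$ has positive dimension, and (ii) even if $g\ne\id$, the bump shrinks to a point as $t\to0$, so the limit carries no information about $g$ ``preserving the bump profile.'' Your genericity claim therefore does not follow from the ingredients you cite. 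The ball case is worse: the semicontinuity theorem requires the limit domain to \emph{not} be biholomorphic to the ball, so when $D\simeq\Bb^d$ you cannot invoke it at all, and ``the same argument applies'' is unsupported.

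The paper sidesteps this entirely. Rather than seeking rigidity for all small $t$, it uses the Burns--Shnider--Wells density theorem to produce, for each integer $k$, a defining function $\tilde\rho_k$ within $C^k$-distance $\epsilon/(k+2^k)$ of $\rho_0+\epsilon/k$ such that the associated domain is rigid. These rigid domains are then spliced into a smooth one-parameter family $\{D_s\}_{s\in[0,1]}$ with $D_{1/k}$ rigid and $D_0=D$, using disjointly supported cutoffs in $s$. Setting $\Omega_t=D_{e^{1-1/|t|}}$ and $\hat\Omega_t$ as you do, the family of biholomorphisms is forced to equal $\id$ at $t=1/(1+\log k)$ and $\varphi$ at $t=-1/(1+\log k)$, which already gives the discontinuity at $t=0$. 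No claim about $\Aut(\Omega_t)$ for generic $t$ is needed, and no appeal to semicontinuity of automorphism groups is made.
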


Our geometric approach also allows  us to consider Question~\ref{main question}  in the context of families of Riemannian metrics. For compact manifolds, we prove the following analogue of Theorem~\ref{thm:smoothness of family of biholomorphisms}.

\begin{theorem}[see Theorem~\ref{thm:riemannian cpct case in paper} below]\label{thm:riemannian cpct case}
Suppose $M$ and $\hat{M}$ are two compact smooth manifolds. Assume $\{ g_t\}_{t \in \Tc}$ and $\{ \hat{g}_t\}_{t \in \Tc}$ are two smooth families of Riemannian metrics on $M$ and $\hat{M}$, respectively. If for each $t \in \Tc$,
\begin{enumerate}\renewcommand{\labelenumi}{$(\alph{enumi})$}
\item the isometry group $\mathsf{Isom}(M,g_t)$ is trivial, and
\item there is an isometry $F_t : (M, g_t) \rightarrow (\hat{M}, \hat{g}_t)$,
\end{enumerate}
then the map
$$
(p,t) \in M \times \Tc \longmapsto F_t(p) \in \hat{M}
$$
is smooth.
\end{theorem}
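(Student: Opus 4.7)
My approach proceeds in three steps: continuity of $t \mapsto F_t$, reduction to a finite-dimensional question via normal coordinates, and smoothness of the resulting finite-dimensional data.

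\emph{Step 1 (Continuity).} Fix a precompact open subset $\Tc_0$ of $\Tc$. Smoothness of the metric families and compactness of $M$ and $\hat M$ yield uniform bounds on all geometric invariants over $\Tc_0$, and hence, via Myers--Steenrod and elliptic bootstrap from $F_t^*\hat g_t = g_t$, uniform $C^k$ bounds on $\{F_t\}_{t \in \Tc_0}$ for every $k$. For $t_n \to t_0$, Arzel\`a--Ascoli extracts a $C^\infty$-convergent subsequence whose limit is an isometry $(M,g_{t_0}) \to (\hat M,\hat g_{t_0})$; triviality of $\Isom(M,g_{t_0})$ forces this limit to equal $F_{t_0}$, and since every subsequential limit is the same, the full sequence converges. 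Thus $t \mapsto F_t$ is continuous into $C^\infty(M,\hat M)$.

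\emph{Step 2 (Reduction).} For a fixed $p_0 \in M$, the isometry identity
\[
F_t\bigl(\exp^{g_t}_{p_0}(v)\bigr) = \exp^{\hat g_t}_{F_t(p_0)}\bigl(dF_t(p_0)\, v\bigr)
\]
holds on a normal neighborhood of the zero section in $T_{p_0}M$. Joint smoothness of the exponential maps in $(v,t)$ (as a consequence of the smooth dependence of the geodesic flow on the ambient metric) reduces smoothness of $F$ near $p_0$ to smoothness of the finite-dimensional datum $t \mapsto (F_t(p_0), dF_t(p_0))$. Compactness of $M$ and a finite cover by normal neighborhoods then globalize.

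\emph{Step 3 (Smoothness of finite-dimensional data).} This is the main obstacle. One approach is to lift to the orthonormal frame bundles $\Oc \to M \times \Tc$ and $\hat\Oc \to \hat M \times \Tc$: each fiber $O(M,g_t)$ carries the canonical absolute parallelism (solder form plus Levi--Civita connection form), preserved by the lifted isometry $\tilde F_t$, so $\tilde F_t$ is determined by its value at a single point. Choosing a smooth section $u(t) \in O(T_{p_0}M, g_t)$ via Gram--Schmidt reduces matters to smoothness of the continuous curve $v(t) := \tilde F_t(u(t))$ in $\hat\Oc$. Alternatively, formally differentiating $F_t^* \hat g_t = g_t$ in $t$ yields
\[
\mathcal L_{Y_t} g_t = \partial_t g_t - F_t^*(\partial_t \hat g_t),
\]
where $Y_t$ encodes $\partial_t F_t$; triviality of $\Isom(M,g_t)$ rules out Killing fields, so $Y \mapsto \mathcal L_Y g_t$ is injective, and existence of $Y_t$ is guaranteed by the existence of $F_t$. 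A bootstrap on derivatives in $t$ then produces higher-order smoothness. The principal difficulty is that $Y \mapsto \mathcal L_Y g_t$ is overdetermined---its image is a proper subspace of symmetric $2$-tensors---so naive implicit-function machinery fails. The argument must exploit either the rigidity of Cartan geometries on the frame bundle (where triviality of $\Isom$ provides the needed non-degeneracy) or the a priori solvability at each step of the bootstrap, obtained from the existence of $F_t$ itself.
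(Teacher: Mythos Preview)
Your Steps~1 and~2 are correct and align with the paper: continuity follows from Arzel\`a--Ascoli together with uniqueness of the isometry (trivial isometry group), and the reduction via the exponential identity to smoothness of the $1$-jet $t\mapsto (F_t(p_0),dF_t(p_0))$ is exactly how the paper finishes once that $1$-jet is known to be smooth.

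Step~3, however, is not a proof---by your own admission. The frame-bundle paragraph only restates the problem: absolute parallelism tells you $\tilde F_t$ is determined by $v(t)=\tilde F_t(u(t))$, but you still have to show the continuous curve $v(t)$ is smooth, and nothing you wrote addresses that. The Lie-derivative paragraph is circular as written: you cannot form $Y_t$ by differentiating $F_t$ in $t$ until you know $F_t$ is differentiable in $t$, which is the very thing at issue; the sentence ``existence of $Y_t$ is guaranteed by the existence of $F_t$'' conflates solvability of $\mathcal L_Y g_t=h$ for a particular $h$ with differentiability of the family. You correctly diagnose that $Y\mapsto\mathcal L_Y g_t$ is injective but overdetermined, so the naive implicit function theorem fails, and then you stop.

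The paper resolves this step by a genuinely different mechanism. It first runs the Ricci flow for a fixed short time on each $g_t$ and $\hat g_t$: the resulting metrics are real analytic (Bando), the isometry groups stay trivial (Kotschwar's backward uniqueness), $F_t$ remains an isometry (forward uniqueness), and the new families are still smooth in $t$ (Hamilton's Nash--Moser argument for short-time existence). With real analytic metrics in hand, the paper encodes the $1$-jet of $F_t$ as the unique solution of a countable system of constraints---matching the full Taylor series, in normal coordinates, of the pulled-back metrics and of the transition maps between overlapping normal charts. The local Noetherian property of real-analytic varieties then truncates this to a \emph{finite} system $\Phi_{t_0}^n$, injectivity of $\Phi_{t_0}^n$ plus the constant rank theorem produces a point where its derivative is injective, and the ordinary finite-dimensional implicit function theorem finishes. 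Real analyticity is precisely what converts your intractable overdetermined infinite-dimensional problem into a finite-dimensional one; your outline contains no substitute for this step.

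Your direction is not hopeless in principle---one can complete it via Ebin's slice theorem for the $\mathrm{Diff}(M)$-action on metrics, which in the free-action case yields a smooth local product structure and hence smooth dependence of $F_t$---but that is itself a Nash--Moser-level theorem, and you would have to invoke and verify it explicitly rather than gesture at ``rigidity of Cartan geometries.''
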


It seems likely that a version of Proposition~\ref{Omega 1/k in intro} also holds in the Riemannian case, but we do not pursue such matters in this paper.

The two theorems seem of different flavor: Theorem~\ref{thm:smoothness of family of biholomorphisms} deals with families of strongly pseudoconvex domains while Theorem~\ref{thm:riemannian cpct case} deals with families of compact Riemannian manifolds. However, both theorems are consequences of the following general theorem about smooth families of real analytic Riemannian metrics.

\begin{theorem}[see Theorem~\ref{thm:continuity implies smoothness in riemannian case} below]\label{thm:continuity implies smoothness in riemannian case in intro}
Suppose $M$ and $\hat{M}$ are two connected smooth manifolds. Assume $\{ g_t\}_{t \in \Tc}$ and $\{ \hat{g}_t\}_{t \in \Tc}$ are two smooth families of complete Riemannian metrics on $M$ and $\hat{M}$, respectively. If
\begin{enumerate}[label=$(\alph*)$]
\item\label{item:taming infinity} there exists a relatively compact connected open set $U \subset M$
such that the inclusion map $\iota : U \hookrightarrow M$ induces   a surjection
  $\iota_*: \pi_1(U,u_0) \rightarrow \pi_1(M,u_0)$ of fundamental groups for some (and hence any) $u_0 \in U$,
\item for each $t \in \Tc$,
\begin{enumerate}[label=$(\roman*)$]
\item\label{item:each gt is real analytic} the manifold $M$ has a real analytic structure for which the metric $g_t$ is real analytic,
\item the isometry group $\mathsf{Isom}(M,g_t)$ is discrete, and
\item  there is an isometry $F_t : (M, g_t) \rightarrow (\hat{M}, \hat{g}_t)$,
\end{enumerate}
and
\item the map
$$
(p,t) \in M \times \Tc \longmapsto F_t(p) \in \hat{M}
$$
is continuous,
\end{enumerate}
then the map
$$
(p,t) \in M \times \Tc \longmapsto F_t(p) \in \hat{M}
$$
is smooth.
\end{theorem}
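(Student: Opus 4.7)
The plan is to fix $t_0\in\Tc$ and prove smoothness of $(p,t)\mapsto F_t(p)$ in a neighborhood of $M\times\{t_0\}$; since smoothness is local in $t$, this suffices. The central principle is the rigidity of real-analytic isometries: for each fixed $t$, the metric $g_t$ is real-analytic (hypothesis~\ref{item:each gt is real analytic}), hence $F_t$ is real-analytic, and since $M$ is connected $F_t$ is completely determined by its $1$-jet $(F_t(p_0), dF_t(p_0))$ at any basepoint $p_0$. The goal becomes to show smoothness of this $1$-jet in $t$, from which smoothness of $F_t$ will follow via an explicit local formula built from the exponential maps.

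Fix $p_0\in U$ and a common convex geodesic neighborhood $V$ of $p_0$ for $g_t$, $t$ near $t_0$. For $p\in V$ the isometry identity
\[
F_t(p)=\exp_{F_t(p_0)}^{\hat g_t}\!\Bigl(dF_t(p_0)\cdot(\exp_{p_0}^{g_t})^{-1}(p)\Bigr)
\]
holds; inverting it recovers $dF_t(p_0)$ continuously from $F_t$ near $p_0$, so the continuity hypothesis on $F$ yields continuity of the $1$-jet. Smoothness of $F_t$ on $V\times\Tc_0$ in both variables is therefore equivalent to smoothness of $t\mapsto (F_t(p_0), dF_t(p_0))$. This jet takes values in the smooth bundle $\Pi\to\Tc$ whose fiber at $t$ is the set of pairs $(q,A)$ with $q\in\hat M$ and $A:(T_{p_0}M,g_t)\to(T_q\hat M,\hat g_t)$ a linear isometry. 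Inside $\Pi$ sits the set $\mathcal{X}$ of $1$-jets that arise from a global isometry. Its fibers $\mathcal{X}_t$ are discrete: the assignment $G\mapsto(G(p_0),dG(p_0))$ from $\Isom((M,g_t),(\hat M,\hat g_t))$ to $\Pi_t$ is injective by real-analytic rigidity, and its domain is a torsor over the discrete group $\Isom(\hat M,\hat g_t)\cong\Isom(M,g_t)$. Once a smooth local section of $\mathcal{X}\to\Tc$ through $(F_{t_0}(p_0),dF_{t_0}(p_0),t_0)$ is constructed, discreteness of the fibers together with continuity of $t\mapsto(F_t(p_0),dF_t(p_0))$ will force the two sections to coincide, yielding the desired smoothness.

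The smooth section is built by passing to the orthonormal frame bundles $O(M,g_t)$ and $O(\hat M,\hat g_t)$, which form smooth fiber bundles over $\Tc$. Lifts of local isometries correspond to integral submanifolds of the Pfaffian system on the product frame bundle cut out by matching the canonical solder and Levi-Civita connection $1$-forms; integrability is governed by the curvature tensors and their covariant derivatives, which depend smoothly on $t$. Cartan's real-analytic rigidity theorem, combined with smooth dependence of solutions of ODE/Pfaffian systems on parameters, produces a smooth family of such integral submanifolds near the graph of $F_{t_0}$, hence a smooth family of local isometries extending $F_{t_0}$ and the sought-after smooth local section of $\mathcal{X}$. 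To promote local smoothness on $V\times\Tc_0$ to smoothness on all of $M\times\Tc_0$, hypothesis~\ref{item:taming infinity} enters: surjectivity of $\iota_\ast:\pi_1(U)\to\pi_1(M)$ is equivalent to connectedness of the preimage of $U$ in the universal cover of $M$, which permits propagation of the local smoothness along paths from $U$ to any point of $M$ by real-analytic continuation of the isometries with no monodromy obstruction.

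The principal obstacle is the construction of the smooth local section: we must produce smoothness in $t$ starting only from an \emph{a priori} continuous family of isometries together with the smooth family of metrics. The real-analytic hypothesis is essential here, as it converts the global isometry problem into a smoothly parameterized Pfaffian/ODE system on the frame bundle whose solutions inherit smooth dependence on $t$; without real-analyticity one would have no way to package global isometries as local data at a single point.
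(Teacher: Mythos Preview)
Your overall architecture matches the paper's: reduce to smoothness of the $1$-jet $t\mapsto(F_t(p_0),dF_t(p_0))$, construct a smooth local section of the set $\mathcal X$ of admissible $1$-jets near $t_0$, and use continuity of the given family together with discreteness of the fibers $\mathcal X_t$ to identify the two; the final propagation via chains of exponential maps is also the paper's last step.

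The genuine gap is the construction of the smooth section. The Pfaffian system on $O(M,g_t)\times O(\hat M,\hat g_t)$ obtained by equating the solder and Levi-Civita connection forms is \emph{not} Frobenius integrable: its torsion is the difference of curvatures, and each successive prolongation brings in the next covariant derivative of curvature. Smooth dependence of integral submanifolds on parameters is a theorem for involutive systems; here an integral submanifold of the correct dimension through a given frame exists only when an \emph{infinite} list of curvature-matching conditions is satisfied, and you have not explained why a smooth family of such submanifolds exists near the graph of $F_{t_0}$. This is precisely the step that consumes most of the paper's proof. The paper encodes the constraints as equality of Taylor series of the metric in normal coordinates (and of the transition maps between normal charts) at finitely many basepoints covering $\overline{F_{t_0}(U)}$, producing a map $\Phi_t$ into an infinite-dimensional jet space that it shows to be injective at $t_0$ (and this injectivity is where hypothesis~(a) is actually used, via Proposition~3.1: a local isometry defined on $F_{t_0}(U)$ extends to a global one, which then must be the identity). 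The key reduction is the local Noetherian property of real-analytic varieties (Theorem~2.2), which truncates the infinite list to finitely many jets on a compact set; only then do the constant rank theorem and the implicit function theorem apply to yield the smooth section. Your appeal to ``Cartan's rigidity theorem, combined with smooth dependence of solutions of ODE/Pfaffian systems on parameters'' skips exactly this finite-reduction step. A minor correction: hypothesis~(a) is not what propagates smoothness from a neighborhood of $p_0$ to all of $M$; that propagation (the paper's Step~6) is a purely smooth argument chaining exponential maps along paths and uses no topology of $U$.
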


\begin{remark} In part~\ref{item:each gt is real analytic}, the real analytic structure may depend on $t$. We also note that since $F_t$ is an isometry, we can
 use $F_t$ to define a real analytic structure on $\hat{M}$ for which $\hat{g}_t$ is real analytic.
\end{remark}

 The case of noncompact manifolds generates technical difficulties coming from the asymptotic behavior of the manifold and metrics. One can view  condition~\ref{item:taming infinity} as a hypothesis which tames these difficulties. Further, for a smoothly bounded domain $\Omega \subset \Cb^d$, this condition clearly holds since  there is a  deformation retraction from $\Omega$ onto a relatively compact subdomain, obtained by flowing along the inward pointing normal lines.

To reduce Theorem~\ref{thm:smoothness of family of biholomorphisms} to Theorem~\ref{thm:continuity implies smoothness in riemannian case in intro},
 we use the Bergman metrics of the domains, which are complete, real analytic, and invariant under the biholomorphism group. Although the domains vary with respect to $t$, the situation can be translated to families of metrics on a central domain $\Omega_{t_0}$ by using the map $\Phi$ in Definition~\ref{defn: families of str pconvex domains}. Further, from~\cite[Theorem 1.17]{GreeneKrantz} for a strongly pseudoconvex domain, isometries of the Bergman metric are either holomorphic or antiholomorphic. Hence, if the biholomorphism group is trivial, then the isometry group of the Bergman metric is discrete (in fact has
at most
two elements).

Deep results of R. Hamilton, see Appendix~\ref{sec:smoothness of Bergman kernels}, imply that the Bergman metrics vary smoothly over smooth families of strongly pseudoconvex
domains. Finally, it was proved  in~\cite{GG2020} that the family of biholomorphisms $\{F_t\}_t$ is continuous with respect to $t$. It follows then from Theorem~\ref{thm:continuity implies smoothness in riemannian case in intro} that the map $(p,t) \mapsto F_t(p)$ is a smooth map.

To reduce Theorem~\ref{thm:riemannian cpct case} to Theorem~\ref{thm:continuity implies smoothness in riemannian case in intro}, we apply the Ricci flow for a small fixed time to each metric, which produces a real analytic metric with the same isometries. Using Hamilton's proof of the short-time existence for the Ricci flow, we verify that the new family of metrics is also smooth. We can then apply Theorem~\ref{thm:continuity implies smoothness in riemannian case in intro} as soon as we prove the continuity of the map $(p,t) \mapsto F_t(p)$. Since the manifolds are compact, this is a simple consequence of the Arzel\`a-Ascoli Theorem.

\section{Preliminaries}

\subsection{Notations}

In this section we fix any possibly ambiguous notation.

\begin{enumerate}[label=(\alph*)]
\item The parameter space $\mathcal T$ is always a connected
 open set in a smooth manifold.

\item We let $\mathbb N=\{0,1,2,\dots,\}$.

\item For a relatively compact domain $U$ in $\Rb^d$  and a function $f$ that is $C^k$ on $\overline U$, we define
\begin{equation}\label{eqn:Ck norm}
|f|_{U,k}=\max_{i_1+\cdots+i_d\leq k}\, \sup_{x\in U}\left|\frac{\partial^{i_1+\cdots i_d} f}{\partial x_1^{i_1}\cdots\partial x_d^{i_d}}(x)\right|.
\end{equation}

\item  In this paper, we work in the $C^\infty$ setting: smooth always means $C^\infty$-smooth, Riemannian metrics are always assumed to be at least $C^\infty$-smooth (and sometimes real analytic), and strongly pseudoconvex domains are always assumed to have $C^\infty$-smooth boundary.
\end{enumerate}

\subsection{Riemannian manifolds}   In this subsection we recall some basic concepts from Riemannian geometry.

Given a complete Riemannian manifold $(M,g)$ we will let $\dist_g$ denote the distance induced by $g$ and let $B_g(p,r) \subset M$ denote the open metric ball of radius $r>0$ centered at $p \in M$ with respect to $\dist_g$. Given $p \in M$, we let
$$
B_{T_pM}(0,r) :=\left\{ v \in T_p M : g_p(v,v) < r^2\right\}
$$
denote the open ball of radius $r > 0$ centered at $0$ in the inner product space $(T_p M, g_p)$.

Throughout the paper, we let   $\exp^g_p : T_p M \rightarrow M$ denote  the {\it exponential map} at $p \in M$ associated to $g$, which is the map such that for any $v \in T_p M$ the curve
$$
t \in \Rb \mapsto \exp^g_p(tv) \in M
$$
is the unique geodesic through $p$ with initial velocity $v$. The \emph{injectivity radius at $p$}, denoted $\injrad_{g}(p)$, is the supremum of all $r>0$ such that the map
$$
\exp^g_{p}\colon B_{T_p M}(0, r)\to B_g(p,r)
$$
is a diffeomorphism.

Geodesics in a Riemannian manifold satisfy a non-linear second order differential equation whose coefficients depend  on the metric $g$ (see \cite[p.~103]{LeeRiemGeom} for the explicit equation). So we have the following.

\begin{observation}\label{obs:smoothness of exponential maps} Suppose $\{g_t\}_{t \in \Tc}$ is a smooth family of complete Riemannian metrics on a manifold $M$. Then the map
$$
(v, t) \in TM \times \Tc \longmapsto \exp_{\pi(v)}^{g_t}(v) \in M
$$
is smooth (where $\pi(v) \in M$ is the basepoint of $v$).
\end{observation}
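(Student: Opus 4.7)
The plan is to reduce the statement to the classical theorem on smooth dependence of solutions of ordinary differential equations on initial data and parameters. Smoothness is a local property on $TM \times \Tc$, so it suffices to work in a coordinate chart. Fix a point $(v_0, t_0) \in TM \times \Tc$ and choose a coordinate chart $(U, x^1, \dots, x^d)$ about $\pi(v_0)$ in $M$ and a coordinate chart $\Tc'$ about $t_0$ in $\Tc$.

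In these coordinates, the Christoffel symbols of $g_t$ are given by
$$
\Gamma^k_{ij}(x,t) = \frac{1}{2} g^{k\ell}(x,t)\left( \partial_i g_{j\ell}(x,t) + \partial_j g_{i\ell}(x,t) - \partial_\ell g_{ij}(x,t)\right),
$$
and the smooth dependence of the family $\{g_t\}$ on $(x,t)$ makes each $\Gamma^k_{ij}$ a smooth function of $(x,t) \in U \times \Tc'$. The geodesic equation $\ddot\gamma^k + \Gamma^k_{ij}(\gamma,t)\dot\gamma^i\dot\gamma^j = 0$, rewritten as a first-order system on $TU$ in the variables $(x,v)$, becomes a smooth vector field $X_t$ on $TU$ depending smoothly on the parameter $t$. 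Thus, I would appeal to the standard theorem (see, e.g., \cite[Thm.~9.48]{LeeRiemGeom} for the parameter-free version combined with a parameter-dependence result for ODEs) that the flow $\Psi_s(x,v;t)$ of such a smoothly parameter-dependent vector field is jointly smooth in $(s,x,v,t)$ on its domain of definition.

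By the definition of the exponential map, $\exp^{g_t}_x(v)$ equals the base-projection of $\Psi_1(x,v;t)$ whenever this is defined. Completeness of each $g_t$ ensures that the time-$1$ flow is defined on all of $TU$ for each fixed $t$; by openness of the domain of the flow, it is also defined on a neighborhood of $(v_0, t_0)$ in $TU \times \Tc'$, and hence joint smoothness of $\Psi_1$ gives joint smoothness of $(v,t) \mapsto \exp^{g_t}_{\pi(v)}(v)$ on this neighborhood. Since $(v_0, t_0)$ was arbitrary, the map is smooth on all of $TM \times \Tc$.

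There is really no substantive obstacle here — the only subtlety worth flagging is that one must invoke the parameter-dependent version of the ODE flow theorem, not merely the initial-condition version, but this is classical. Completeness is used only to guarantee that the time-$1$ flow is everywhere defined; local smoothness itself does not require it.
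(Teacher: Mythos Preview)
Your proposal is correct and follows exactly the approach the paper indicates: the paper states the observation with the one-line justification that geodesics satisfy a second-order ODE whose coefficients depend smoothly on $g$, and you have simply spelled out the standard ODE smooth-dependence argument in coordinates. One small imprecision: the time-$1$ flow need not stay inside a single chart $U$, so ``defined on all of $TU$'' is not quite the right claim; the clean formulation is to regard the geodesic spray as a smooth parameter-dependent vector field on $TM$ and invoke smoothness of its global flow, with completeness guaranteeing the flow exists for all time.
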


Given two Riemannian manifolds $(M,g)$ and $(N,h)$, a smooth map $f : M \rightarrow N$ is a \emph{local isometry} if  $f^* h = g$. If, in addition, $f$ is a diffeomorphism, then $f$ is an \emph{isometry}. An isometry $f : M \rightarrow N$ maps geodesics to geodesics and hence for any $p \in M$ we have
\begin{equation}\label{efp}
\exp^h_{f(p)} \circ df_p = f \circ \exp^g_p.
\end{equation}

We denote the isometry group of $(M,g)$ by ${\rm Isom}(M,g)$. When endowed with the $\Cc^0$-topology, this topological group has a compatible Lie group structure where the map
$$
(f,p) \in {\rm Isom}(M,g) \times M \mapsto f(p) \in M
$$
is smooth, see~\cite{MS1939}.

A Riemannian metric $g$ on a smooth manifold $M$ is \emph{real analytic} if $M$ has a real analytic structure compatible with its smooth structure (i.e. a covering by smooth charts where the transition functions are real analytic) such that the metric is real analytic (i.e. real analytic in each of these charts).  In this case, the real analytic structure on $M$ naturally extends to a real analytic structure on the tangent bundle $TM$ and the map
$$
v \in TM \mapsto \exp_{\pi(v)}^g(v) \in M
$$
is real analytic (this follows from the Cauchy--Kovalevskaya theorem).

\subsection{Analytic varieties} A subset $S$ in a real analytic manifold $M$ is a \emph{real analytic variety} if for each $p \in S$ there are an open neighborhood $U$ of $p$ and
finitely many real analytic functions $f_1, \dots, f_m : U \rightarrow \Rb$ such that
$$
U \cap S = \bigcap_{i=1}^m f_i^{-1}(0).
$$

Analytic varieties have the following local Noetherian property, see for instance~\cite[Chapter V Corollary 1]{Narasimhan}.

\begin{theorem}\label{thm:noetherian property}
If $S_1, S_2, \dots$ are closed real analytic varieties in $M$, then for any compact subset $K \subset M$ there exists $N \geq 1$ such that
$$
K \cap \bigcap_{i=1}^\infty S_i = K \cap \bigcap_{i=1}^N S_i.
$$
\end{theorem}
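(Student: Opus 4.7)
First, I would reduce to the decreasing case by replacing each $S_n$ with $\bigcap_{j=1}^n S_j$; a finite intersection of closed real analytic varieties is again a closed real analytic variety, since its defining equations are just the union of the defining equations of the factors. In the reduced setting, $K \cap S_i$ becomes a decreasing sequence of compact subsets of $K$, and the goal becomes to show that this sequence eventually stabilizes.

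The core ingredient would be a local stabilization claim: for every $p \in M$ there exist a neighborhood $V_p$ of $p$ and an integer $N(p)$ such that $S_i \cap V_p = S_{N(p)} \cap V_p$ for all $i \geq N(p)$. To establish this, I would invoke the Noetherianity of the local ring $\mathcal{O}_p$ of germs of real analytic functions at $p$, a standard consequence of the Weierstrass preparation theorem. The ideals $I(S_i)_p$ of germs at $p$ vanishing on $S_i$ form an ascending chain in $\mathcal{O}_p$ and therefore stabilize at some index $N(p)$. Choosing finitely many generators $f_1, \dots, f_k$ of $I(S_{N(p)})_p$, defined as analytic functions on a common neighborhood $V_p$ of $p$, and shrinking $V_p$, one obtains $S_{N(p)} \cap V_p = \{f_1 = \cdots = f_k = 0\}$. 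Combined with the inclusion $S_i \subseteq S_{N(p)}$ and the ideal equality $I(S_i)_p = I(S_{N(p)})_p$, this yields the desired equality $S_i \cap V_p = S_{N(p)} \cap V_p$ on a uniform neighborhood.

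Given the local claim, a compactness argument finishes the proof: cover $K$ by finitely many neighborhoods $V_{p_1}, \dots, V_{p_r}$ of the above form, and set $N := \max_{1 \leq j \leq r} N(p_j)$. For every $i \geq N$ and every $j$ we have $K \cap V_{p_j} \cap S_i = K \cap V_{p_j} \cap S_{N(p_j)}$, so $K \cap S_i$ is independent of $i$ once $i \geq N$. Intersecting over all $i$ then gives $K \cap S_N = K \cap \bigcap_{j=1}^\infty S_j$.

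I expect the main obstacle to be the uniformity in the local stabilization step. Noetherianity of $\mathcal{O}_p$ gives germ-stabilization at $p$, which a priori only yields, for each $i \geq N(p)$ separately, a neighborhood on which $S_i$ and $S_{N(p)}$ agree; these neighborhoods might in principle shrink as $i$ grows. In the complex analytic setting, coherence of the ideal sheaves $\mathcal{I}(S_i)$ makes the uniformization automatic, but in the real analytic setting coherence can fail, and the passage from pointwise to uniform stabilization has to be argued more carefully using the finite generation of $I(S_{N(p)})_p$. This is precisely the content of the cited corollary in Narasimhan, which I would invoke rather than reprove from scratch.
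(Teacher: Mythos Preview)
The paper does not prove this theorem; it is stated with a citation to Narasimhan and used as a black box. So there is no paper proof to compare against, and your sketch is in fact more detailed than what the paper provides.

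Your outline---reduce to a decreasing chain, establish local stabilization from the Noetherianity of $\mathcal{O}_p$, then cover $K$ by finitely many such neighborhoods---is the standard route, and your identification of the uniformity issue is exactly right. One remark on the internal logic: the sentence ``Combined with the inclusion $S_i \subseteq S_{N(p)}$ and the ideal equality $I(S_i)_p = I(S_{N(p)})_p$, this yields the desired equality $S_i \cap V_p = S_{N(p)} \cap V_p$ on a uniform neighborhood'' does \emph{not} follow from what precedes it. Knowing that the defining germs of $S_i$ lie in the ideal $(f_1,\dots,f_k)\subset\mathcal{O}_p$ only tells you that they are $\mathcal{O}_p$-combinations of the $f_j$ on some neighborhood of $p$ that a priori depends on $i$; so the set equality you assert is only known on an $i$-dependent neighborhood. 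You then correctly flag this in your final paragraph, but as written the second paragraph overclaims and the third paragraph retracts. It would be cleaner to state up front that Noetherianity of $\mathcal{O}_p$ gives stabilization of the \emph{germs} $(S_i)_p$, and that the passage from germ stabilization to stabilization on a fixed neighborhood is the nontrivial step handled in Narasimhan (via the structure of irreducible components of an analytic germ and their good representatives). Since the paper itself simply cites Narasimhan for the whole statement, deferring to that reference for this step is entirely in line with the paper's treatment.
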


\subsection{Formal power series}\label{section: formal power series} Given a finite dimensional real vector space $V$, let $\Pc(\Rb^d, V)$ denote the vector space of formal power series of the form
$$
\sum_{\alpha} v_\alpha x^\alpha
$$
where $\alpha \in
\mathbb N^d$ is a multi-index, $v_\alpha \in V$, $x \in \Rb^d$, and $x^\alpha = x_1^{\alpha_1} \cdots x_d^{\alpha_d}$. Given a multi-index $\alpha \in \mathbb N^d$, let $\abs{\alpha} : = \alpha_1+ \cdots + \alpha_d$. Then let $\Pc_N(\Rb^d, V) \subset \Pc(\Rb^d, V)$ denote the finite dimensional subspace of polynomials
 of the form
$$
\sum_{\abs{\alpha} \leq N} v_\alpha x^\alpha
$$
and let $\pi_N : \Pc(\Rb^d, V) \rightarrow \Pc_N(\Rb^d, V)$ denote the natural projection given by
$$
\pi_N\left(\sum_{\alpha} v_\alpha x^\alpha  \right) = \sum_{\abs{\alpha} \leq N} v_\alpha x^\alpha.
$$

Given a real analytic function $f : \Oc \rightarrow V$ where $\Oc \subset \Rb^d$ is an open set containing $0$, let
$$
\mathcal{J}(f) \in \Pc(\Rb^d, V)
$$
denote the Taylor series expansion of $f$ at $x=0$.

\section{Local isometries of real analytic Riemannian manifolds}

In this section we consider local isometries of real analytic Riemannian manifolds. We first develop a general result which provides a sufficient condition for a local isometry to extend to a global one. Then we describe when two collections of normal balls in two different manifolds are isometric.

\subsection{Extending local isometries}

It is well known that a local isometry defined on an open connected set in a simply connected real analytic Riemannian manifold extends to a local isometry on the entire manifold (see for instance~\cite[Proposition 11.4]{H2001}). In this section we observe the following consequence of this result.

\begin{proposition}\label{prop:analytic continuation non simply connected case} Suppose $(M,g)$ and $(\hat{M},\hat{g})$ are connected complete real analytic Riemannian manifolds. Assume $U \subset M$ is a connected open set, $u_0 \in U$, and the inclusion map $\iota : U \hookrightarrow M$ induces a surjection $\iota_*: \pi_1(U,u_0) \rightarrow \pi_1(M,u_0)$ of fundamental groups. Then any local isometry $f : U \rightarrow \hat{M}$ extends to a local isometry $F : M \rightarrow \hat{M}$. If, in addition, $f_* : \pi_1(U,u_0) \rightarrow \pi_1(\hat{M},f(u_0))$ is surjective, then $F : M \rightarrow \hat{M}$ is an isometry.
\end{proposition}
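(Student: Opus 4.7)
The plan is to pass to the universal cover of $M$, apply the cited simply-connected extension result there, and then descend. Explicitly, let $\pi\colon(\tilde M,\tilde g)\to(M,g)$ denote the universal cover equipped with the pulled-back metric; it is complete and real analytic since $g$ is. Fix a lift $\tilde u_0\in\pi^{-1}(u_0)$ and let $\tilde U$ be the connected component of $\pi^{-1}(U)$ containing $\tilde u_0$. The map $\pi$ is a local isometry, so $\tilde f:=f\circ\pi|_{\tilde U}\colon\tilde U\to\hat M$ is a local isometry defined on a connected open set of the simply connected, complete, real analytic manifold $\tilde M$. Applying the cited simply connected version of the result (e.g.~\cite[Proposition 11.4]{H2001}), $\tilde f$ extends to a local isometry $\tilde F\colon\tilde M\to\hat M$.

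The crux of the argument, and the step where the hypothesis on $\iota_*$ is used, is to show that $\tilde F$ is invariant under the deck transformation group $\Gamma$ of $\pi$, so that it descends to a local isometry $F\colon M\to\hat M$ (which by construction will extend $f$). Given $\sigma\in\Gamma$, let $[\alpha]\in\pi_1(M,u_0)$ be the corresponding class; by surjectivity of $\iota_*$ we find a loop $\gamma$ in $U$ based at $u_0$ with $\iota_*[\gamma]=[\alpha]$. Lifting $\iota\circ\gamma$ to a path in $\tilde M$ starting at $\tilde u_0$ produces a path ending at $\sigma(\tilde u_0)$ which stays inside $\tilde U$ (since the lift is connected and contained in $\pi^{-1}(U)$), so $\sigma(\tilde U)=\tilde U$. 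On $\tilde U$ we have
$$
\tilde F\circ\sigma=f\circ\pi\circ\sigma=f\circ\pi=\tilde F.
$$
Since both $\tilde F$ and $\tilde F\circ\sigma$ are local isometries of real analytic Riemannian manifolds that agree on the nonempty open set $\tilde U\subset\tilde M$, the rigidity identity~\eqref{efp} together with real analyticity of local isometries forces them to agree on the entire connected manifold $\tilde M$. Hence $\tilde F$ is $\Gamma$-invariant and descends to the required local isometry $F\colon M\to\hat M$.

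For the second assertion, assume in addition that $f_*\colon\pi_1(U,u_0)\to\pi_1(\hat M,f(u_0))$ is surjective. A local isometry from a complete Riemannian manifold into a connected Riemannian manifold of the same dimension is automatically a surjective Riemannian covering (one lifts geodesics from the target using completeness of the source and~\eqref{efp}), so $F$ is a Riemannian covering. From $F\circ\iota=f$ on $U$ we read off $F_*\circ\iota_*=f_*$, and since both $\iota_*$ and $f_*$ are surjective, so is $F_*$. Therefore the covering $F$ has trivial deck group and is a diffeomorphism, hence a global isometry. I expect the descent step above to be the main obstacle; the extension on $\tilde M$ and the final upgrade from local isometry to isometry are then largely bookkeeping with standard Riemannian covering theory.
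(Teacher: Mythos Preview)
Your proof is correct and follows essentially the same route as the paper's: lift to the universal cover, extend via the simply connected case, use surjectivity of $\iota_*$ to show the chosen component $\tilde U$ is deck-invariant, conclude $\tilde F\circ\sigma=\tilde F$ on $\tilde U$ and hence everywhere by real analyticity, then descend; for the upgrade to a global isometry both you and the paper use that a local isometry from a complete manifold is a covering and that surjectivity of $F_*$ forces it to be one-sheeted. One cosmetic remark: in your last line, ``trivial deck group'' is not quite the right invariant to invoke (a non-normal covering can have trivial deck group without being a diffeomorphism); the clean statement is that $F_*$ surjective makes the index $[\pi_1(\hat M):F_*\pi_1(M)]$ equal to $1$, hence $F$ is one-sheeted.
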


\begin{proof} Let $\pi : (\tilde{M}, \tilde{g}) \rightarrow (M,g)$ be the Riemannian universal cover of $M$, that is $\tilde{M}$ is the topological universal cover  with the unique smooth structure making $\pi$ a smooth covering map and $\tilde{g}$ is the unique Riemannian metric that makes $\pi$ a local isometry. Then the real analytic structure on $M$ induces a real analytic structure on $\tilde{M}$ where the metric $\tilde{g}$ is real analytic.

Let $U' \subset \pi^{-1}(U)$ be a connected component. Since $U$ is connected, we have $\pi(U^\prime)= U$. Let $\tilde{f} := f \circ \pi :  U'  \rightarrow \hat{M}$. Then $\tilde{f}$ extends to a real analytic local isometry $\tilde{F} : \tilde{M} \rightarrow \hat{M}$, see for instance~\cite[Proposition 11.4]{H2001}.

Identify $ \pi_1(M,u_0)$ with the deck transformation group of $\tilde{M} \rightarrow M$. We claim that $U'$ is $\pi_1(M,u_0)$-invariant. Fix $\gamma \in \pi_1(M,u_0)$. By assumption $\iota_*: \pi_1(U,u_0) \rightarrow \pi_1(M,u_0)$ is surjective, so there exists a curve $\sigma : \Sb^1 \rightarrow U$ with $\sigma(0)=u_0$ and $[\sigma] = \gamma$,    i.e. $\sigma$ is homotopic to $\gamma$. Then there exists a curve $\tilde{\sigma} : [0,1] \rightarrow \tilde{M}$ such that $\tilde{\sigma}(0) \in U'$ and $\pi \circ \tilde{\sigma}(t) = \sigma(e^{2\pi i t})$ for all $t \in [0,1]$.
By definition of the universal cover, $\gamma \tilde{\sigma}(0) = \tilde{\sigma}(1)$. Further, since $U'$ is connected, we have $\tilde{\sigma}([0,1]) \subset U'$. Hence $\gamma U' \cap U' \neq \emptyset$.
Since $U'$ is a connected component of $\pi^{-1}(U)$, we have $U' = \gamma U'$. Thus $U'$ is $\pi_1(M,u_0)$-invariant.

Next we claim that $\tilde{F}$ is $\pi_1(M,u_0)$-invariant and hence $\tilde{F}$ descends to a local isometry $F : M \rightarrow \hat{M}$. Fix $\gamma \in \pi_1(M,u_0)$. Then for $x \in U'$ we have
$$
\tilde{F}(\gamma x) =\tilde{f}(\gamma x) =  (f \circ \pi)(\gamma x) = (f \circ \pi)(x) = \tilde{f}(x) = \tilde{F}(x).
$$
So $\tilde{F} \circ \gamma = \tilde{F}$ on $U'$. Since $ U'  \subset \tilde{M}$ is open and $\tilde{F}$ is real analytic, we
 have  $\tilde{F} \circ \gamma = \tilde{F}$. Thus  $\tilde{F}$ is $\pi_1(M,u_0)$-invariant.

Now suppose that $f_* : \pi_1(U,u_0) \rightarrow \pi_1(\hat{M},f(u_0))$ is surjective. Since $F$ is a local isometry and $(M,g)$ is complete, $F : M \rightarrow \hat{M}$ is a smooth covering map; see for instance~\cite[Lemma 1.38]{CheegerEbin}.  Since $f = F \circ \iota$, we have
$$
 \pi_1(\hat{M},f(u_0)) = f_*\left( \pi_1(U,u_0)  \right) = F_* \circ \iota_* \left(  \pi_1(U,u_0)  \right) = F_* \left( \pi_1(M, u_0) \right).
 $$
Thus $F_* : \pi_1(M,u_0) \rightarrow \pi_1(\hat{M},u_0)$ is surjective and hence $F$ must be a diffeomorphism. So $F$ is an isometry.
\end{proof}

\subsection{Locally isometric union of  metric balls} In this section we find countably many parameters that describe when two unions of sufficiently small metric balls in real analytic Riemannian manifolds are isometric. This parametrization involves the Taylor series expansion of the metric at the center of each ball and the Taylor series expansion of the transition functions  between normal coordinates on the balls. It is somewhat similar to parametrizations used in some proofs of compactness theorems in Riemannian geometry, see for instance the proof of ~\cite[Theorem 11.3.6]{Petersen}.

Suppose $(M,g)$ is a complete real analytic Riemannian $d$-manifold. Let
$$
\Fc(M) \rightarrow M
$$ be the orthonormal frame bundle of $M$, i.e. the fiber $\Fc_p(M)$ above a point $p \in M$ consists of all ordered orthonormal bases of $(T_pM, g_p)$.

Let $e_1,\dots, e_d$ denote the standard basis of $\Rb^d$ and let $\Sc_d$ denote the vector space of symmetric $d$-by-$d$ real matrices.

Let $\delta : = \injrad_{g}(p)$.  Given
\begin{equation}\label{def-beta}
\beta = (p, (E_1, \dots, E_d)) \in
 \Fc_p(M),
\end{equation}
 let $L_\beta : (\Rb^d, \ip{\cdot, \cdot}) \rightarrow (T_p M, g_p)$ denote the unique linear isometry with $L_\beta(e_i) = E_i$.

 By definition of the exponential map, $\exp^g_p \circ L_\beta$ induces a diffeomorphism $B_{\Rb^d}(0, \delta) \rightarrow B_g(p,\delta)$. Since the metric $g$ is a symmetric tensor  and the tangent bundle of $B_{\Rb^d}(0,\delta)$ is trivializable in a natural way,
   we can view the pullback $(\exp^g_p \circ L_\beta)^* g$ as a real analytic map
$$
(\exp^g_p \circ L_\beta)^* g : B_{\Rb^d}(0, \delta) \rightarrow \Sc_d.
$$
Let
\begin{equation}\label{S_g}
S_g(\beta) : = \mathcal{J}\Big( (\exp^g_p \circ L_\beta)^* g\Big) \in \Pc(\Rb^d, \Sc_d)
\end{equation}
denote the Taylor series expansion of $(\exp^g_p \circ L_\beta)^* g$ at $x=0 \in \Rb^d$.

Next suppose $\beta_1 \in \Fc_{p_1}(M)$, $\beta_2 \in \Fc_{p_2}(M)$, and
$$
\epsilon: =  \injrad_{g}(p_1)-\dist_g(p_1, p_2) >0.
$$
Then
$$
(\exp^g_{p_1} \circ L_{\beta_1})^{-1} \circ (\exp^g_{p_2} \circ L_{\beta_2}) : B_{\Rb^d}(0,\epsilon) \rightarrow \Rb^d
$$
is a well defined real analytic map. Let
\begin{equation}\label{T_g}
T_g(\beta_1,\beta_2) : = \mathcal{J}\Big( (\exp^g_{p_1} \circ L_{\beta_1})^{-1} \circ (\exp^g_{p_2} \circ L_{\beta_2}) \Big) \in \Pc(\Rb^d, \Rb^d)
\end{equation}
denote the Taylor series expansion of $(\exp^g_{p_1} \circ L_{\beta_1})^{-1} \circ (\exp^g_{p_2} \circ L_{\beta_2})$ at $x=0 \in \Rb^d$.

\begin{proposition}\label{prop:local isometry on union of balls}
Suppose $(M,g)$ and $(\hat{M},\hat{g})$ are connected complete real analytic Riemannian manifolds.
Let  $\beta_i \in \Fc_{p_i}(M)$ and $\hat{\beta}_i \in \Fc_{\hat{p}_i}(\hat{M})$ for $i=1,\dots, m$. Fix
$$
0<\delta
<\frac{1}{2} \min_{1\leq j\leq m}\Bigl\{ \injrad_{g}(p_j),  \injrad_{\hat g}(\hat p_j) \Bigr\}
$$
and let
$$
f_i : = \left( \exp^{\hat g}_{\hat{p}_i} \circ L_{\hat{\beta}_i} \right) \circ \left( \exp^g_{p_i} \circ L_{\beta_i} \right)^{-1} : B_g(p_i,\delta) \rightarrow B_{\hat{g}}(\hat{p}_i,\delta).
$$
Assume
\begin{enumerate} \renewcommand{\labelenumi}{$(\alph{enumi})$}
\item $p_1,\dots, p_m$ are pairwise distinct and $\hat{p}_1, \dots, \hat{p}_m$ are pairwise distinct;
\item $S_g(\beta_i)= S_{\hat{g}}(\hat{\beta}_i)$ for $i=1,\dots, m$;
\item $B_g(p_i, \delta) \cap B_g(p_j, \delta) \neq \emptyset$ if and only if $B_{\hat{g}}(\hat{p}_i, \delta) \cap B_{\hat{g}}(\hat{p}_j, \delta) \neq \emptyset$, and in this case
$$
T_g(\beta_i, \beta_j) = T_{\hat{g}}(\hat{\beta}_i, \hat{\beta}_j).
$$
\end{enumerate}
Then
$$
f =\cup_{i=1}^m f_i : \cup_{i=1}^m B_g(p_i, \delta) \rightarrow \cup_{i=1}^m B_{\hat{g}}(\hat{p}_i, \delta)
$$
is a well defined local isometry.
\end{proposition}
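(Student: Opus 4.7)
The plan is to establish two things separately: (i) each $f_i$ is an isometry from $B_g(p_i,\delta)$ onto $B_{\hat g}(\hat p_i,\delta)$, and (ii) $f_i = f_j$ on the overlap $B_g(p_i,\delta) \cap B_g(p_j,\delta)$ whenever that overlap is non-empty. Granted both, the union $f = \bigcup_i f_i$ is automatically a well-defined local isometry on $\bigcup_{i=1}^m B_g(p_i,\delta)$.

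For (i), the hypothesis $\delta < \frac{1}{2}\min\{\injrad_g(p_i),\injrad_{\hat g}(\hat p_i)\}$ makes $\exp^g_{p_i}\circ L_{\beta_i}$ and $\exp^{\hat g}_{\hat p_i}\circ L_{\hat\beta_i}$ diffeomorphisms from $B_{\Rb^d}(0,2\delta)$ onto their respective geodesic balls, so $f_i$ is a diffeomorphism by composition. To see it is isometric, I would observe that hypothesis (b) is precisely the statement that the two $\Sc_d$-valued real analytic maps $(\exp^g_{p_i}\circ L_{\beta_i})^* g$ and $(\exp^{\hat g}_{\hat p_i}\circ L_{\hat\beta_i})^* \hat g$ on the connected open ball $B_{\Rb^d}(0,2\delta)$ share the same Taylor series at $0$; the identity principle for real analytic maps then forces them to coincide on the whole ball, and pulling back by $(\exp^g_{p_i}\circ L_{\beta_i})^{-1}$ yields $f_i^*\hat g = g$.

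For (ii), suppose the overlap is non-empty, so $\dist_g(p_i,p_j) < 2\delta$ and, by (c), $\dist_{\hat g}(\hat p_i,\hat p_j) < 2\delta$; in particular, $p_j \in B_g(p_i,\injrad_g(p_i))$ and $\hat p_j \in B_{\hat g}(\hat p_i,\injrad_{\hat g}(\hat p_i))$. I would extend $f_i$ and $f_j$ by the same defining formulas to real analytic maps $\tilde f_i$ and $\tilde f_j$ on $B_g(p_i,\injrad_g(p_i))$ and $B_g(p_j,\injrad_g(p_j))$ respectively. Since the transition maps $h_{ij}$ and $\hat h_{ij}$ have identical Taylor series at $0$ by (c), the identity principle gives $h_{ij} = \hat h_{ij}$ on some ball $B_{\Rb^d}(0,\rho)$ with $\rho > 0$; unwinding the definitions, this translates to $\tilde f_i = \tilde f_j$ on the open neighborhood $W := (\exp^g_{p_j}\circ L_{\beta_j})(B_{\Rb^d}(0,\rho))$ of $p_j$ in $M$.

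It then remains to propagate this equality from $W$ to the entire overlap. Set $\Omega := B_g(p_i,\injrad_g(p_i)) \cap B_g(p_j,\injrad_g(p_j))$; both $\tilde f_i$ and $\tilde f_j$ are real analytic on $\Omega$ and agree on the open subset $W$, so the identity principle (applied in local charts of $\hat M$) gives equality on the connected component of $\Omega$ containing $p_j$. The main technical point, and the step I expect to be the principal obstacle, is to verify that every $x^* \in B_g(p_i,\delta) \cap B_g(p_j,\delta)$ lies in this component. For this I would take the minimizing geodesic $\gamma:[0,r]\to M$ from $p_j$ to $x^*$, where $r = \dist_g(p_j,x^*) < \delta$; clearly $\gamma \subset B_g(p_j,\injrad_g(p_j))$, and the triangle inequality estimate
$$\dist_g(p_i,\gamma(s)) \leq \dist_g(p_i,x^*) + \dist_g(x^*,\gamma(s)) < \delta + (r-s) < 2\delta < \injrad_g(p_i)$$
shows that $\gamma$ stays in $\Omega$, giving a path from $p_j$ to $x^*$ inside $\Omega$. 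Hence $f_i(x^*) = \tilde f_i(x^*) = \tilde f_j(x^*) = f_j(x^*)$, as desired.
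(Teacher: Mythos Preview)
Your proposal is correct and follows essentially the same two-step strategy as the paper: first use hypothesis (b) and the identity principle to show each $f_i$ is an isometry, then use hypothesis (c) and the identity principle to show $f_i=f_j$ on overlaps. The only notable difference is in how the identity principle is propagated on the overlap: the paper extends $f_i$ to $B_g(p_i,2\delta)$, asserts this contains the connected ball $B_g(p_j,\delta)$, and applies the identity principle there; you instead extend both maps to their full injectivity-radius balls and verify explicitly, via a minimizing geodesic, that every point of $B_g(p_i,\delta)\cap B_g(p_j,\delta)$ lies in the connected component of the intersection containing $p_j$. Your geodesic argument is a bit more careful on this point (indeed the paper's containment $B_g(p_j,\delta)\subset B_g(p_i,2\delta)$ only gives distance $<3\delta$ by the triangle inequality, so your route sidesteps that looseness), but the underlying idea is the same.
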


\begin{proof} We first show that each $f_i$ is a local isometry. By the definition of injectivity radius, $f_i$ is a well defined real analytic diffeomorphism. Further, the Taylor series expansions at 0 of
$$
(\exp^g_{p_i} \circ L_{\beta_i})^* g \quad \text{and} \quad (\exp^g_{p_i} \circ L_{\beta_i})^*f_i^*\hat{g} = (\exp^{\hat g}_{\hat{p}_i} \circ L_{\hat{\beta}_i})^* \hat{g}
$$
agree. Hence
$$
(\exp^g_{p_i} \circ L_{\beta_i})^* g =  (\exp^g_{p_i} \circ L_{\beta_i})^*f_i^*\hat{g}
$$
on $B_{\Rb^d}(0,\delta)$, which implies that $f_i^*\hat{g} = g$ on $B_g(p_i,\delta)$. Hence $f_i$ is a local isometry.

Next we show that $f =\cup_{i=1}^m f_i$ is well defined. It suffices to fix $1 \leq i, j \leq n$ with
$$
\emptyset \neq B_g(p_i, \delta) \cap B_g(p_j, \delta)
$$
and show that  $f_i = f_j$ on    the intersection.  By the definition of injectivity radius, the map $f_i$ extends to a real analytic diffeomorphism on
$$
B_g(p_i, 2\delta) \supset B_g(p_j, \delta).
$$
Since  $B_g(p_j, \delta)$ is connected, it suffices to show that $f_i = f_j$
in
 a neighborhood of $p_j$. By assumption, the Taylor series expansions at 0 of
\begin{equation*}
 (\exp^g_{p_i} \circ L_{\beta_i})^{-1} \circ (\exp^g_{p_j} \circ L_{\beta_j})  \quad \text{and} \quad  (\exp^{\hat g}_{\hat{p}_i} \circ L_{\hat{\beta}_i})^{-1} \circ (\exp^{\hat g}_{\hat{p}_j} \circ L_{\hat{\beta}_j})
 \end{equation*}
agree. Hence
\begin{equation*}
 (\exp^g_{p_i} \circ L_{\beta_i})^{-1} \circ (\exp^g_{p_j} \circ L_{\beta_j})  =  (\exp^{\hat g}_{\hat{p}_i} \circ L_{\hat{\beta}_i})^{-1} \circ (\exp^{\hat g}_{\hat{p}_j} \circ L_{\hat{\beta}_j})
 \end{equation*}
 on    $B_{\Rb^d}(0,\delta')$ when $\delta'$ is sufficiently small.
  Since $\exp^g_{\hat{p}_i} \circ L_{\hat{\beta}_i}$ induces a diffeomorphism $B_{\Rb^d}(0,\delta) \rightarrow B_{\hat{g}}(\hat{p}_i, \delta)$    that sends the origin to $\hat p_i$, we then have
 $$
 f_i \circ (\exp^g_{p_j} \circ L_{\beta_j}) =  \exp^{\hat g}_{\hat{p}_j} \circ L_{\hat{\beta}_j}
 $$
 on    $B_{\Rb^d}(0,\delta')$. Since $\exp^g_{p_j} \circ L_{\beta_j}$ induces a diffeomorphism $B_{\Rb^d}(0,\delta') \rightarrow B_g(p_j, \delta')$,  we then have
 $$
 f_i = f_j
 $$
 on $B_g(p_j,\delta')$. Therefore, $f_i=f_j$ on $B_g(p_j, \delta)$.
\end{proof}

\section{Proof of Theorem~\ref{thm:continuity implies smoothness in riemannian case in intro}}

In this section we prove Theorem~\ref{thm:continuity implies smoothness in riemannian case in intro}, which we restate here.

\begin{theorem}\label{thm:continuity implies smoothness in riemannian case}
Suppose $M$ and $\hat{M}$ are two connected smooth manifolds. Assume $\{ g_t\}_{t \in \Tc}$ and $\{ \hat{g}_t\}_{t \in \Tc}$ are two smooth families of complete Riemannian metrics on $M$ and $\hat{M}$, respectively. If
\begin{enumerate}[label=$(\alph*)$]
\item there exists a relatively compact connected open set $U \subset M$
such that the inclusion map $\iota : U \hookrightarrow M$ induces   a surjection
  $\iota_*: \pi_1(U,u_0) \rightarrow \pi_1(M,u_0)$ of fundamental groups for some (and hence any) $u_0 \in U$,
\item for each $t \in \Tc$,
\begin{enumerate}[label={$(\roman*)$}]
\item the manifold $M$ has a real analytic structure for which the metric $g_t$ is real analytic,
\item the isometry group $\mathsf{Isom}(M,g_t)$ is discrete, and
\item  there is an isometry $F_t : (M, g_t) \rightarrow (\hat{M}, \hat{g}_t)$,
\end{enumerate}
and
\item the map
$$
(p,t) \in M \times \Tc \longmapsto F_t(p) \in \hat{M}
$$
is continuous,
\end{enumerate}
then the map
$$
(p,t) \in M \times \Tc \longmapsto F_t(p) \in \hat{M}
$$
is smooth.
\end{theorem}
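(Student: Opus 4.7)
Fix $t_0\in\Tc$. The plan is to prove smoothness of $(p,t)\mapsto F_t(p)$ on a neighborhood of $\{t_0\}\times\overline U$; the extension to $M\times\Tc$ then follows by repeating the argument along a chain of normal balls covering any compact subset of $M$ starting from $U$, with Proposition~\ref{prop:analytic continuation non simply connected case} guaranteeing that the local extensions agree with $F_t$. Cover $\overline U$ by finitely many normal balls $B_{g_{t_0}}(p_i,\delta_0)$, $i=1,\dots,m$, with $\delta_0$ less than a quarter of the injectivity radii at the $p_i$ and at $\hat p_i(t_0):=F_{t_0}(p_i)$; lower semi-continuity of the injectivity radius in a smooth family of metrics lets us keep these bounds on a neighborhood $\Tc_0\ni t_0$. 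Choose smoothly varying $g_t$-orthonormal frames $\beta_i(t)\in\Fc_{p_i}(M)$ and set $\hat p_i(t):=F_t(p_i)$ (continuous by hypothesis) and $\hat\beta_i(t):=dF_t(\beta_i(t))$. Inverting \eqref{efp} gives $dF_t(v)=(\exp^{\hat g_t}_{\hat p_i(t)})^{-1}\circ F_t\circ\exp^{g_t}_{p_i}(v)$ on a small ball; by Observation~\ref{obs:smoothness of exponential maps} and the continuity hypothesis on $F$, the right-hand side is jointly continuous in $(v,t)$, so $\hat\beta_i(t)$ is continuous. Once $(\hat p_i(t),\hat\beta_i(t))$ is proven smooth in $t$, the local formula
\[
F_t\big|_{B_{g_t}(p_i,\delta_0)} = \exp^{\hat g_t}_{\hat p_i(t)}\circ L_{\hat\beta_i(t)}\circ\bigl(\exp^{g_t}_{p_i}\circ L_{\beta_i(t)}\bigr)^{-1}
\]
combined with Observation~\ref{obs:smoothness of exponential maps} delivers the desired smoothness on $U\times\Tc_0$.

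\textbf{Noetherian reduction and isolation.}
For each $N\in\mathbb N$ and $t\in\Tc_0$, equipping $\hat M$ with the real analytic structure that makes $\hat g_t$ real analytic, let $Z_N(t)$ be the real analytic subvariety of a compact neighborhood $\Oc\subset\Fc(\hat M)^m$ of $\hat\beta(t_0):=(\hat\beta_i(t_0))_i$ cut out by the truncated Taylor identities
\[
\pi_N S_{\hat g_t}(\hat\gamma_i)=\pi_N S_{g_t}(\beta_i(t)),\qquad \pi_N T_{\hat g_t}(\hat\gamma_i,\hat\gamma_j)=\pi_N T_{g_t}(\beta_i(t),\beta_j(t))
\]
(the second for intersecting pairs) together with the $\hat g_t$-orthonormality of the $\hat\gamma_i$. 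By Proposition~\ref{prop:local isometry on union of balls}, $\hat\beta(t)\in\bigcap_N Z_N(t)$, and conversely any element of $\bigcap_N Z_N(t_0)$ produces a local isometry on $\bigcup_i B_{g_{t_0}}(p_i,\delta_0)\supset U$ which, by hypothesis~\ref{item:taming infinity} and Proposition~\ref{prop:analytic continuation non simply connected case}, extends to a local isometry $M\to\hat M$. Elements close to $\hat\beta(t_0)$ give extensions $\Cc^0$-close to $F_{t_0}$ on $\overline U$ and hence inducing the same map on fundamental groups as $F_{t_0}$ (which is surjective onto $\pi_1(\hat M)$); the second assertion of Proposition~\ref{prop:analytic continuation non simply connected case} then promotes any such extension to an actual isometry, and discreteness of $\Isom(M,g_{t_0})$ forces it to equal $F_{t_0}$. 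Shrinking $\Oc$, we obtain $\bigcap_N Z_N(t_0)\cap\Oc=\{\hat\beta(t_0)\}$, and Theorem~\ref{thm:noetherian property} produces $N_0$ with $Z_{N_0}(t_0)\cap\Oc=\{\hat\beta(t_0)\}$.

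\textbf{Smooth dependence via the implicit function theorem.}
Assemble the equations defining $Z_N(t)$ into a smooth map $\Phi_N:\Tc_0\times\Oc\to W_N$ valued in a finite-dimensional vector space; smoothness in $t$ comes from Observation~\ref{obs:smoothness of exponential maps} and the smoothness of the metric families. The kernels $\ker\partial_{\hat\gamma}\Phi_N(t_0,\hat\beta(t_0))$ form a decreasing sequence of subspaces in the finite-dimensional tangent space, so they stabilize at some $N_1\geq N_0$. A vector $\xi$ in the stable kernel represents an infinitesimal variation of the frame data that preserves the Taylor series of the pulled-back metric to all orders at each center and on each overlap; by the real analyticity of $\hat g_{t_0}$ and a Cartan--Ambrose--Hicks type rigidity, the $\xi_i$ integrate to Killing fields on neighborhoods of the $\hat p_i(t_0)$ which agree on overlaps (thanks to the $T$-matching), yielding a Killing field on $\bigcup_i B_{\hat g_{t_0}}(\hat p_i(t_0),\delta_0)\supset F_{t_0}(U)$; an analytic continuation argument parallel to Proposition~\ref{prop:analytic continuation non simply connected case} (using hypothesis~\ref{item:taming infinity}) extends it globally. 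The Lie algebra of Killing fields on $(\hat M,\hat g_{t_0})$ is isomorphic via $F_{t_0}$ to that of $\Isom(M,g_{t_0})$, which is trivial by discreteness, so $\xi=0$. Hence $\partial_{\hat\gamma}\Phi_{N_1}(t_0,\hat\beta(t_0))$ is injective, and the implicit function theorem produces a smooth curve $t\mapsto\hat\gamma(t)\in\Oc$ with $\hat\gamma(t_0)=\hat\beta(t_0)$ and $\Phi_{N_1}(t,\hat\gamma(t))=0$. Uniqueness in the implicit function theorem combined with continuity of $\hat\beta(t)$ forces $\hat\gamma(t)=\hat\beta(t)$ near $t_0$, so $\hat\beta(t)$ is smooth and the reduction in the first paragraph completes the proof.

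\textbf{Main obstacle.}
The delicate step is the third paragraph: isolation of $\hat\beta(t_0)$ as a point of the real analytic variety $\bigcap_N Z_N(t_0)$ must be upgraded to injectivity of a finite-order Jacobian, which is what allows the implicit function theorem to convert the discrete matching problem into smooth parametric dependence. This upgrade is not automatic---a real analytic variety can reduce to an isolated point while carrying a nontrivial Zariski tangent space---and it genuinely uses the geometric content: the stable kernel consists of formal Killing data, which by Cartan--Ambrose--Hicks type rigidity integrates to an actual global Killing field, and the discreteness hypothesis is precisely what rules this out.
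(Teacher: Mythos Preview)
Your proposal shares the paper's architecture---encode the isometry by Taylor data in normal coordinates, truncate via the Noetherian property, then apply the implicit function theorem---but diverges at the key step of obtaining Jacobian injectivity. You fix the centers $p_i\in M$ in advance and argue directly that $\partial_{\hat\gamma}\Phi_{N_1}(t_0,\hat\beta(t_0))$ is injective: the stable kernel would produce, via real analyticity, local Killing fields that glue and then extend (by a Nomizu-type analogue of Proposition~\ref{prop:analytic continuation non simply connected case}) to a global Killing field on $(\hat M,\hat g_{t_0})$, which must vanish since $\Isom(M,g_{t_0})$ is discrete. This is correct in outline but leans on machinery the paper avoids: the extension theorem for Killing fields with $\pi_1$-descent, the fact that Killing fields on a complete manifold are complete (so that their space equals the Lie algebra of the isometry group), and some care in the gluing (the $T$-matching only gives $X_i=X_j$ near one center, so you need either connectedness of overlaps or a $1$-jet propagation argument). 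Your appeal to ``Cartan--Ambrose--Hicks type rigidity'' is a slight misattribution; what you actually use is just that a real analytic tensor with vanishing Taylor series at a point vanishes identically.

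The paper sidesteps the entire infinitesimal analysis with the constant rank theorem. Since the truncated map $\Phi^n_{t_0}$ is set-theoretically injective on a compact set with nonempty interior (Lemma~\ref{the n}), at an interior point $B_0$ of maximal rank the local normal form $(x_1,\dots,x_{D_1})\mapsto(x_1,\dots,x_r,0,\dots,0)$ forces $r=D_1$, i.e.\ the derivative is injective there---no Killing fields needed. The trade-off is that $B_0$ need not be your preferred point $\hat\beta(t_0)$; the paper accommodates this by choosing the centers $p_i\in M$ \emph{a posteriori} as $F_{t_0}^{-1}$ of the basepoints of $B_0$, rather than fixing them at the outset as you do. Both proofs then finish identically via the exponential-map formula along a chain of balls. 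Incidentally, in your scheme the Noetherian isolation step is redundant: once the Jacobian of $\Phi_{N_1}$ is injective, the implicit function theorem already gives local uniqueness of the zero, so you never need $Z_{N_0}(t_0)\cap\Oc=\{\hat\beta(t_0)\}$ separately.
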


\subsection*{The general strategy} Since an isometry is determined by its 1-jet at a point, it suffices to show that there is some $p \in M$ where the map
$$
t \mapsto (F_t(p), d(F_t)_p)
$$
is smooth. The idea in the proof is to first use Proposition~\ref{prop:local isometry on union of balls} to show that $(F_t(p), d(F_t)_p)$ is determined by a countably infinite collection of constraints
$$
P_n(t, p, F_t(p), d(F_t)_p) = 0, \quad n \in \Nb.
$$
We then use the local Noetherian property, see Theorem~\ref{thm:noetherian property}, to reduce to finitely many constraints and then finally use the implicit function theorem to deduce that there is some $p \in M$ where the map
$$
t \mapsto (F_t(p), d(F_t)_p)
$$
is smooth.

Implementing this strategy is fairly technical and we don't explicitly define the constraints until the very end of the proof, see Lemma~\ref{the n}   for the reduction to finitely many constraints and Lemma~\ref{lem:explicit constraints} to find the functional equation for  the implicit function theorem.

\subsection*{Notations and reductions}
To prove Theorem~\ref{thm:continuity implies smoothness in riemannian case} it suffices to fix $t_0 \in \Tc$ and prove that $(p,t) \mapsto F_t(p)$ is smooth in a neighborhood of $M \times \{t_0\}$.

We will use  the letters $p,q$ to denote points in $M$ and the letters $x,y,z$ to denote points in $\hat{M}$.

\subsection*{Step 1: Using Proposition~\ref{prop:local isometry on union of balls}}

Let $V:= F_{t_0}(U)$. Since $F_{t_0} : M \rightarrow \hat{M}$ is a diffeomorphism, $V$ is an open connected relatively compact subset of $\hat{M}$ and the inclusion map $\iota : V \hookrightarrow \hat{M}$ induces a surjection
$\iota_*: \pi_1(V,v_0) \rightarrow \pi_1(\hat{M},v_0)$ for any $v_0 \in V$.

\begin{lemma} There exists some $x \in V$ such that
$$
\sigma(x) \neq x
$$
for all non-trivial $\sigma  \in \mathsf{Isom}(\hat{M},\hat{g}_{t_0})$.
\end{lemma}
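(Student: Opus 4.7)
The plan is to exhibit $x$ as a generic point of $V$: for each non-trivial $\sigma \in \mathsf{Isom}(\hat{M},\hat{g}_{t_0})$ the fixed-point set $\mathsf{Fix}(\sigma)$ will be closed and nowhere dense in $\hat{M}$, and then the Baire category theorem will guarantee that $V \setminus \bigcup_{\sigma \neq \id_{\hat{M}}} \mathsf{Fix}(\sigma)$ is non-empty.

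First, I would observe that $\mathsf{Isom}(\hat{M},\hat{g}_{t_0})$ is countable. Since $F_{t_0}$ is an isometry, conjugation by $F_{t_0}$ induces a topological (in fact Lie group) isomorphism
$$
\mathsf{Isom}(M,g_{t_0}) \longrightarrow \mathsf{Isom}(\hat{M},\hat{g}_{t_0}), \qquad \varphi \longmapsto F_{t_0} \circ \varphi \circ F_{t_0}^{-1}.
$$
The left-hand side is discrete by hypothesis, hence so is the right-hand side. By the Myers--Steenrod theorem (as recalled in the discussion preceding \eqref{efp}), the isometry group of the second countable connected manifold $\hat{M}$ carries a compatible second countable Lie group structure, and a discrete second countable group is countable.

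Next, for each $\sigma \in \mathsf{Isom}(\hat{M},\hat{g}_{t_0}) \setminus \{\id_{\hat{M}}\}$, I would argue that $\mathsf{Fix}(\sigma)$ is closed with empty interior in $\hat{M}$. Closedness is immediate from continuity. For empty interior: if $\sigma$ coincided with $\id_{\hat{M}}$ on some non-empty open set $W \subset \hat{M}$, then at any $w \in W$ both maps would share the same value and the same differential, and since an isometry of a connected Riemannian manifold is determined by its $1$-jet at a single point (an immediate consequence of \eqref{efp} together with connectedness of $\hat{M}$), this would force $\sigma = \id_{\hat{M}}$ globally, contradicting non-triviality.

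Finally, the non-empty open set $V$ inherits the Baire property from the manifold $\hat{M}$, so the countable union $\bigcup_{\sigma \neq \id_{\hat{M}}} \mathsf{Fix}(\sigma)$ of closed sets with empty interior still has empty interior in $\hat{M}$. Hence $V \setminus \bigcup_{\sigma \neq \id_{\hat{M}}} \mathsf{Fix}(\sigma)$ is dense in $V$, and any element of this set is a point $x$ satisfying the conclusion. The main obstacle is really just verifying the countability of $\mathsf{Isom}(\hat{M},\hat{g}_{t_0})$; once that is secured, the remainder of the argument is standard Baire category reasoning combined with the rigidity fact that an isometry is determined by its $1$-jet at a point.
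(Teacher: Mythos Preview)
Your proof is correct and follows essentially the same approach as the paper: both argue that each $\mathrm{Fix}(\sigma)$ is closed with empty interior (via the fact that an isometry agreeing with the identity on an open set is the identity, a consequence of~\eqref{efp}), that the isometry group is countable because it is discrete, and then invoke the Baire category theorem. Your version is slightly more explicit in justifying discreteness of $\mathsf{Isom}(\hat{M},\hat{g}_{t_0})$ via conjugation by $F_{t_0}$, which the paper leaves implicit.
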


\begin{proof}  Recall that an isometry of a connected Riemannian manifold which is the identity on an open set is the identity everywhere. This follows from Equation~\eqref{efp}, which implies that for an isometry $f : (N, h) \rightarrow (N,h)$ the set
$$
\{ x \in N : f(x) = x \text{ and } df_x = \id_{T_x N} \}
$$
is open and closed.

Then for every non-trivial $\sigma \in \mathsf{Isom}(\hat{M},  \hat g_{t_0}
)$, the set
$$
{\rm Fix}(\sigma ) : = \{ x \in \hat{M} : \sigma (x) = x\}
$$
is closed and has empty interior. Since $\mathsf{Isom}(\hat{M},\hat{g}_{t_0})$ is discrete, it is countable. So by the Baire category theorem there exists some
\begin{equation*}
x \in  V \setminus \bigcup \left\{ {\rm Fix}(\sigma ) : \sigma  \in \mathsf{Isom}(\hat{M},
\hat g_{t_0}
), \, \sigma  \neq \id_{\hat M}\right\}.  \qedhere
\end{equation*}
\end{proof}

Now fix $x_1 \in V$ such that
$$
\sigma (x_1) \neq x_1
$$
for all non-trivial $\sigma  \in \mathsf{Isom}(\hat{M},\hat{g}_{t_0})$. Since $\mathsf{Isom}(\hat{M},\hat{g}_{t_0})$
  acts properly on $\hat{M}$ and  is discrete,
 the orbit
$$
\mathsf{Isom}(\hat{M},\hat{g}_{t_0})(x_1) \subset \hat{M}
$$
is discrete and hence
\begin{equation}\label{eqn:definition of r0}
r_0:=\inf\left\{ \dist_{\hat{g}_{t_0}}(x_1, \sigma (x_1)) :\sigma  \in \mathsf{Isom}(\hat{M},\hat{g}_{t_0}), \, \sigma  \neq \id_{\hat M}\right\} \in [0,+\infty) \cup \{+\infty\}
\end{equation}
is    positive  (we define the infimum of the empty set to be $+\infty$).

Given $A \subset \hat{M}$, $r > 0$, and $t \in \Tc$ let
$$
\Nc_t(A, r) : = \left\{ x \in \hat{M} : \dist_{\hat{g}_t}(x, A) < r \right\}.
$$
Since $V$ is relatively compact and the map $x \in \hat M \mapsto \injrad_{\hat g_{t_0}}(x)$ is continuous (see~\cite[Proposition 10.37]{LeeRiemGeom}), we can fix $0 < \delta_0 < r_0$ such that
\begin{equation}\label{eqn:lower bound on inj rad}
\injrad_{\hat{g}_{t_0}}(  x) >
    4\delta_0
   \end{equation}
   for all $x \in \Nc_{t_0}(\overline{V}, \delta_0)$.

Next fix $x_2,\dots, x_m \in \overline{V}$ such that
\begin{equation}\label{eqn:pi balls cover}
\overline{V} \subset \bigcup_{i=1}^m B_{\hat g_{t_0}}(x_i,\delta_0).
\end{equation}

\begin{lemma}\label{lem:intersection}
 There exist $\delta \in (0,
 \delta_0
 )$ and $
   r \in (0, \delta_0/2)$ such that: if $y_i \in  B_{\hat g_{t_0}}(x_i, r)$ for $i=1,\dots, m$, then
$$
B_{\hat g_{t_0}}(y_i,\delta)  \cap B_{\hat g_{t_0}}(y_j,\delta) \neq \emptyset \Longleftrightarrow {B}_{\hat g_{t_0}}(x_i,\delta) \cap B_{\hat g_{t_0}}(x_j,\delta) \neq \emptyset
$$
and
$$
 \overline{V} \subset \bigcup_{i=1}^m B_{\hat g_{t_0}}(y_i,\delta).
$$
 \end{lemma}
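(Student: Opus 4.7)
The plan is to exploit the elementary fact that in any Riemannian manifold two open balls of the same radius $\delta$ centered at $p$ and $q$ intersect if and only if $\dist_{\hat g_{t_0}}(p,q) < 2\delta$---one direction is the triangle inequality, and the other follows by picking a point at arc length $L/2$ along any curve from $p$ to $q$ of length $L < 2\delta$. This reduces the intersection portion of the lemma to controlling pairwise distances under small perturbations, where the triangle inequality gives $|\dist_{\hat g_{t_0}}(y_i,y_j) - \dist_{\hat g_{t_0}}(x_i,x_j)| < 2r$.

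For the covering condition, compactness of $\overline V$ together with the open cover \eqref{eqn:pi balls cover} yields a uniform gap $\eta > 0$ such that for every $x \in \overline V$ there is some $i$ with $\dist_{\hat g_{t_0}}(x, x_i) \leq \delta_0 - \eta$ (apply a standard continuity argument to $x \mapsto \min_i (\delta_0 - \dist_{\hat g_{t_0}}(x,x_i))$). Inside the interval $(\delta_0 - \eta/2, \delta_0)$ I would then pick $\delta$ avoiding the finite set $\bigl\{ \frac{1}{2}\dist_{\hat g_{t_0}}(x_i,x_j) : 1 \leq i < j \leq m \bigr\}$; for such a $\delta$ there is automatically a further constant $\eta' > 0$ with $|2\delta - \dist_{\hat g_{t_0}}(x_i,x_j)| \geq \eta'$ for all $i \neq j$.

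With $\delta$ so fixed, I would take $r < \min\{\eta/2,\, \eta'/4,\, \delta_0/2\}$. The covering conclusion is then immediate from the triangle inequality, since $\dist_{\hat g_{t_0}}(x, x_i) \leq \delta_0 - \eta$ and $\dist_{\hat g_{t_0}}(x_i, y_i) < r$ combine to give $\dist_{\hat g_{t_0}}(x, y_i) < \delta_0 - \eta/2 < \delta$. For the intersection equivalence, the bound $|\dist_{\hat g_{t_0}}(y_i, y_j) - \dist_{\hat g_{t_0}}(x_i, x_j)| < 2r < \eta'/2$ together with the gap property of $\delta$ forces $2\delta - \dist_{\hat g_{t_0}}(y_i, y_j)$ and $2\delta - \dist_{\hat g_{t_0}}(x_i, x_j)$ to have the same sign, so the reformulated intersection criterion is preserved in both directions. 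I do not anticipate a serious obstacle here; the argument is essentially a compactness-plus-finiteness bookkeeping exercise, the only mild subtlety being the need to avoid the finitely many ``resonant'' radii at which a pair of original balls would touch exactly.
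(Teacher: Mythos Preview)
Your proof is correct and follows essentially the same strategy as the paper's: both exploit compactness of $\overline V$ and finiteness of the index set to extract uniform constants guaranteeing that the intersection pattern and the covering are stable under small perturbations of the centers. The paper's proof is considerably sketchier---it simply asserts that the intersection status is preserved for $\delta$ close to $\delta_0$ and $r$ small---whereas you make the mechanism explicit via the distance characterization of ball intersection and the avoidance of the finitely many resonant radii $\tfrac{1}{2}\dist_{\hat g_{t_0}}(x_i,x_j)$.
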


 \begin{proof}    Fix $1 \leq i,j \leq m$. When the open set $B_{\hat g_{t_0}}(x_i,\delta)  \cap B_{\hat g_{t_0}}(x_j,\delta)$ is not empty, the open set $B_{\hat g_{t_0}}(y_i,\delta)  \cap B_{\hat g_{t_0}}(y_j,\delta)$ is still non-empty when
 $\delta\in(0,\delta_0)$ is sufficiently close to $\delta_0$
 and $r>0$ is sufficiently close to zero, and vice versa. Since $m$ is finite, we can find such $\delta,r$ satisfying the first assertion. We can also fix $\delta, r$ to satisfy the second assertion since $\overline V$ is compact and $\{B_{\hat g_{t_0}}(x_i,\delta_0)\}$ is a finite and open covering.
\end{proof}

For $t \in \Tc$, let
$$
\Fc_t(\hat{M}) \rightarrow \hat{M}
$$
be the orthonormal frame bundle with respect to the metric $\hat{g}_t$. Then for $i=1,\dots, m$, we consider    the fiber bundle
$$
\Oc_{t,i}: = \Fc_t(\hat{M})|_{B_{\hat g_{t_0}}(x_i,r)}
$$
 above $B_{\hat g_{t_0}}(x_i,r)$. Also let
$$
E: = \left\{ (i,j) : 1 \leq i < j \leq m \text{ and } B_{\hat g_{t_0}}(x_i,\delta) \cap B_{\hat g_{t_0}}(x_j,\delta) \neq \emptyset\right\}.
$$

Now fix a sufficiently small open set $\Tc_0 \subset \Tc$ containing $t_0$. Using the definitions of $S_g, T_g$ in ~\eqref{S_g} and ~\eqref{T_g},   we claim that we can
 define a map
$$
\Phi_t : \Oc_{t,1} \times \cdots \times \Oc_{t,m} \rightarrow \Pc(\Rb^d, \Sc_d)^m \times \Pc(\Rb^d, \Rb^d)^{\abs{E}}
$$
for $t \in \Tc_0$
by
$$
\Phi_t\left(\beta_1,\dots, \beta_m\right) = \big( \left( S_{\hat{g}_t}(\beta_1), \dots, S_{\hat{g}_t}(\beta_m)\right), \left( T_{\hat{g}_t}(\beta_i, \beta_j) : (i,j) \in E \right) \big).
$$

\begin{lemma}\label{lem:Phit well defined} After possibly shrinking $\Tc_0$, $\Phi_t$ is well-defined when $t \in \Tc_0$. \end{lemma}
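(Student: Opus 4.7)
The plan is to reduce the well-definedness of $\Phi_t$ to a single inequality and verify it by continuity. Since $S_{\hat g_t}(\beta_i)$ is defined whenever $\injrad_{\hat g_t}(p_i) > 0$, the only real constraint is that $T_{\hat g_t}(\beta_i, \beta_j)$ be defined for each $(i,j) \in E$ and each pair $\beta_i \in \Oc_{t,i}$, $\beta_j \in \Oc_{t,j}$. Recalling the definition of $T_g$, this amounts to the strict inequality
$$
\injrad_{\hat g_t}(p_i) > \dist_{\hat g_t}(p_i, p_j),
$$
where $p_i, p_j$ denote the basepoints of $\beta_i, \beta_j$.

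At $t = t_0$ this holds with uniform slack. For any $(i,j) \in E$, the definition of $E$ gives $\dist_{\hat g_{t_0}}(x_i, x_j) < 2\delta$, and since $p_i \in B_{\hat g_{t_0}}(x_i, r)$ and $p_j \in B_{\hat g_{t_0}}(x_j, r)$ with $r < \delta_0/2$ and $\delta < \delta_0$, the triangle inequality yields $\dist_{\hat g_{t_0}}(p_i, p_j) < 2r + 2\delta < 3\delta_0$. Meanwhile $p_i \in \Nc_{t_0}(\overline V, \delta_0)$, so~\eqref{eqn:lower bound on inj rad} gives $\injrad_{\hat g_{t_0}}(p_i) > 4\delta_0$, producing a margin of at least $\delta_0$.

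To extend to $t$ near $t_0$, set $K := \bigcup_{i=1}^m \overline{B_{\hat g_{t_0}}(x_i, r)}$, which is compact and contains every admissible basepoint. Since $\{\hat g_t\}$ is a smooth family, the distance $(p, q, t) \mapsto \dist_{\hat g_t}(p, q)$ is jointly continuous on $K \times K \times \Tc$, and by Observation~\ref{obs:smoothness of exponential maps} together with a standard compactness argument, the map $(p, t) \mapsto \injrad_{\hat g_t}(p)$ admits a uniform positive lower bound on $K \times \Tc_0$ after possibly shrinking $\Tc_0$. Both strict inequalities $\injrad_{\hat g_t} > 3\delta_0$ on $K$ and $\dist_{\hat g_t}(p_i, p_j) < 3\delta_0$ for admissible pairs are therefore preserved on a sufficiently small $\Tc_0$, which suffices. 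The only mild obstacle is extracting the uniform injectivity-radius lower bound as $t$ varies, but this is routine given the smooth dependence of the exponential map on $(v, t)$.
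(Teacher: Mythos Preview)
Your proof is correct and follows essentially the same approach as the paper: reduce to the inequality $\injrad_{\hat g_t}(p_i) > \dist_{\hat g_t}(p_i,p_j)$, verify it with margin $\delta_0$ at $t=t_0$ using~\eqref{eqn:lower bound on inj rad} and the triangle inequality, and then extend to nearby $t$ by continuity of the distance and stability of the injectivity-radius lower bound. The only difference is that the paper invokes the lower semicontinuity of $(y,t)\mapsto \injrad_{\hat g_t}(y)$ by citing Ehrlich, whereas you sketch this via Observation~\ref{obs:smoothness of exponential maps} and compactness; your phrasing ``admits a uniform positive lower bound'' undersells what you actually need and use (namely the specific bound $>3\delta_0$), but the argument you intend is the correct one.
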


\begin{proof}
 It is clear that each $S_{\hat{g}_t}(\beta_j)$ is well-defined. To check that $T_{\hat{g}_t}(\beta_i,\beta_j)$ can be defined by \eqref{T_g},  we need to show that if $(i,j) \in E$, $\beta_i \in \Oc_{t,i}$ has base point $y_i \in B_{\hat g_{t_0}}(x_i,r)$, and $\beta_j  \in \Oc_{t,j}$ has base point $y_j \in B_{\hat g_{t_0}}(x_j,r)$, then
$$
\dist_{\hat{g}_t}(y_i, y_j) < \injrad_{\hat{g}_t}(y_i).
$$
In this case, Equation~\eqref{eqn:lower bound on inj rad} implies that
$$
 \injrad_{\hat{g}_{t_0}}(y_i) \geq 4\delta_0
 $$
and
 $$
 \dist_{\hat{g}_{t_0}}(y_i, y_j) \leq \dist_{\hat{g}_{t_0}}(x_i,x_j) + 2r \leq 2\delta + 2r \leq 3 \delta_0
 $$
 by  the bounds on $\delta, r$ in Lemma~\ref{lem:intersection}. Further, the map
 $$
(y,t) \in \hat M \times \Tc  \mapsto  \injrad_{\hat{g}_{t}}(y) \in \Rb
 $$
 is lower semicontinuous (see~\cite[Remark on pg. 170]{Ehrlich}) and the map
 $$
 (y,y^\prime,t) \in \hat{M} \times \hat{M} \times \Tc \longmapsto \dist_{\hat{g}_t}(y, y^\prime) \in \Rb
 $$
 is continuous. So after possibly shrinking $\Tc_0$, $\Phi_t$ is well-defined when $t \in \Tc_0$.
\end{proof}

\begin{lemma}\label{lem:total map is injective} $\Phi_{t_0}$ is injective. \end{lemma}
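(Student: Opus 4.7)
The plan is to show that whenever $\Phi_{t_0}(\beta_1,\dots,\beta_m) = \Phi_{t_0}(\beta_1',\dots,\beta_m')$ one has $\beta_i = \beta_i'$ for every $i$. Let $y_i$ and $y_i'$ denote the base points of $\beta_i$ and $\beta_i'$, so both lie in $B_{\hat g_{t_0}}(x_i,r)$. I will assume without loss of generality that $r<\delta$ and that the $y_i$ (resp.\ the $y_i'$) are pairwise distinct; both conditions can be arranged by further shrinking $r$ in Lemma~\ref{lem:intersection}. The strategy is to produce a global isometry $F\in\mathsf{Isom}(\hat M,\hat g_{t_0})$ with $F(y_i)=y_i'$ and $dF_{y_i}\circ L_{\beta_i}=L_{\beta_i'}$ for every $i$, and then to use the defining property of $r_0$ at the point $x_1$ to force $F=\id_{\hat M}$.

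The first step is to apply Proposition~\ref{prop:local isometry on union of balls} with both manifolds equal to $(\hat M,\hat g_{t_0})$ and with the two families of frames $(\beta_i)$ and $(\beta_i')$. The equality of Taylor data $S_{\hat g_{t_0}}(\beta_i) = S_{\hat g_{t_0}}(\beta_i')$ and $T_{\hat g_{t_0}}(\beta_i,\beta_j) = T_{\hat g_{t_0}}(\beta_i',\beta_j')$ for $(i,j)\in E$ is the hypothesis $\Phi_{t_0}(\beta) = \Phi_{t_0}(\beta')$; the matching of the intersection patterns of the two ball families follows from Lemma~\ref{lem:intersection}; and~\eqref{eqn:lower bound on inj rad} combined with $\delta<\delta_0/2$ supplies the required injectivity radius bound. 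The proposition yields a local isometry
\[
f\colon W := \bigcup_{i=1}^m B_{\hat g_{t_0}}(y_i,\delta)\longrightarrow \hat M,\qquad f(y_i)=y_i',\quad df_{y_i}\circ L_{\beta_i}=L_{\beta_i'}.
\]
By Lemma~\ref{lem:intersection}, $\overline V\subset W$; and since $r<\delta$, each $B_{\hat g_{t_0}}(y_i,\delta)$ contains the point $x_i\in\overline V$, so every one of these balls lies in the connected component $W_0$ of $W$ containing $V$. The inclusion $V\hookrightarrow\hat M$ induces a surjection on fundamental groups, so the same holds for $W_0\hookrightarrow\hat M$.

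Next, Proposition~\ref{prop:analytic continuation non simply connected case} extends $f|_{W_0}$ to a local isometry $F\colon\hat M\to\hat M$. Running the construction symmetrically with the roles of $(\beta_i)$ and $(\beta_i')$ swapped yields a second local isometry $F'\colon\hat M\to\hat M$. The main technical point to handle carefully is promoting $F$ from a local isometry to a genuine isometry: near each $y_i$ the composition $F'\circ F$ is a local isometry that fixes $y_i$ with identity differential, so by the uniqueness principle for isometries with prescribed $1$-jet (cf.~\eqref{efp}) it coincides with $\id_{\hat M}$ on a neighborhood of $y_i$, and connectedness of $\hat M$ together with real analyticity then force $F'\circ F=\id_{\hat M}$ globally; the symmetric argument gives $F\circ F'=\id_{\hat M}$. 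Hence $F\in\mathsf{Isom}(\hat M,\hat g_{t_0})$.

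Finally, I will invoke the definition of $r_0$ in~\eqref{eqn:definition of r0}. Since $F$ is an isometry, $F(y_1)=y_1'$, and both $y_1,y_1'\in B_{\hat g_{t_0}}(x_1,r)$,
\[
\dist_{\hat g_{t_0}}(x_1,F(x_1)) \leq \dist_{\hat g_{t_0}}(x_1,y_1') + \dist_{\hat g_{t_0}}(F(y_1),F(x_1)) < 2r < \delta_0 < r_0.
\]
By the defining property of $r_0$, this forces $F=\id_{\hat M}$. Therefore $y_i=F(y_i)=y_i'$ for every $i$, and then $dF_{y_i}=L_{\beta_i'}\circ L_{\beta_i}^{-1}=\id_{T_{y_i}\hat M}$ yields $\beta_i=\beta_i'$.
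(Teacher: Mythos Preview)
Your proof is correct and follows the same line as the paper's: apply Proposition~\ref{prop:local isometry on union of balls} to the two frame tuples to build a local isometry on a set containing $V$, extend it over $\hat M$ via Proposition~\ref{prop:analytic continuation non simply connected case}, and then use the distance estimate at $x_1$ together with~\eqref{eqn:definition of r0} to force $F=\id_{\hat M}$.

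The only substantive difference is in how you upgrade the extension $F$ from a local isometry to a global one. The paper simply invokes the second clause of Proposition~\ref{prop:analytic continuation non simply connected case} to conclude that $F$ is an isometry; you instead rerun the construction with the roles of $(\beta_i)$ and $(\beta_i')$ swapped to obtain $F'$, and show $F'\circ F=F\circ F'=\id_{\hat M}$ via the $1$-jet rigidity of local isometries. Your route is slightly longer but entirely self-contained, whereas the paper's appeal to Proposition~\ref{prop:analytic continuation non simply connected case} tacitly uses that $f_*:\pi_1(V)\to\pi_1(\hat M,f(v_0))$ is surjective, a point it does not spell out. Your added hypotheses $r<\delta$ and pairwise distinctness of the $y_i$ are harmless normalizations that could have been imposed when choosing $r$ in Lemma~\ref{lem:intersection}; the paper does not make them explicit.
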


\begin{proof} Suppose
$$
\Phi_{t_0}\left(\beta_1,\dots, \beta_m\right) = \Phi_{t_0}\left(\beta_1^\prime,\dots, \beta_m^\prime\right).
$$
For each $i=1,\dots,m$, let $y_i,y_i' \in B_{\hat g_{t_0}}(x_i,r)$ be the base points of $\beta_i, \beta_i'$ respectively. We apply Proposition~\ref{prop:local isometry on union of balls}  to
$\{ \beta_i\}$, $\{\beta_i^\prime\}$. By Equation~\eqref{eqn:lower bound on inj rad}
\begin{align*}
\frac{1}{2}\min_{1 \leq i \leq m} \left\{ \injrad_{\hat{g}_{t_0}}(y_i), \injrad_{\hat{g}_{t_0}}(y_i^\prime)\right\} \geq 2 \delta_0 > 2 \delta > \delta.
\end{align*}
Further, by Lemma~\ref{lem:intersection} we have
 $$
 \overline V\subset\bigcup_{i=1}^m B_{\hat g_{t_0}}(y_i,\delta).
 $$
 Hence by Proposition~\ref{prop:local isometry on union of balls}  there exists a local isometry $f : V \rightarrow \hat{M}$ such that $df(\beta_i) = \beta_i^\prime$ for $i=1,\dots,m$.

By Proposition~\ref{prop:analytic continuation non simply connected case}, $f$ extends to an isometry $F : (\hat{M},\hat{g}_{t_0}) \rightarrow (\hat{M}, \hat{g}_{t_0})$. Since $dF(\beta_1) = \beta_1^\prime$, we have $F(y_1)=y_1'$. Thus
\begin{align*}
\dist_{\hat{g}_{t_0}}(F(x_1), x_1)& \leq \dist_{\hat{g}_{t_0}}(F(x_1),F(y_1))+\dist_{\hat{g}_{t_0}}(F(y_1),y_1^\prime)+\dist_{g_{t_0}}(y_1^\prime,x_1)\\
&\leq r + 0 + r <  \delta_0 < r_0.
\end{align*}
So by Equation~\eqref{eqn:definition of r0}, $F = \id_{\hat{M}}$. Thus
\begin{equation*}
\left(\beta_1,\dots, \beta_m\right) =\left(\beta_1^\prime,\dots, \beta_m^\prime\right). \qedhere
\end{equation*}
\end{proof}

\subsection*{Step 2: Using Theorem~\ref{thm:noetherian property}}

Using the notation introduced in Section~\ref{section: formal power series}, for $n \in \Nb$ and $t \in \Tc_0$, let
$$
\Phi_t^n : \Oc_{t,1} \times \cdots \times \Oc_{t,m} \rightarrow \Pc_n(\Rb^d, \Sc_d)^m \times \Pc_n(\Rb^d, \Rb^d)^{\abs{E}}
$$
be $\Phi_t$ post-composed with the natural projections. Notice that $\Pc_n(\Rb^d, \Sc_d)^m \times \Pc_n(\Rb^d, \Rb^d)^{\abs{E}}$ is a finite dimensional vector space and each $\Phi_t^n$ is a real analytic map.

For $i =1,\dots, m$, fix a compact subset
$$
K_i \subset \Oc_{t_0,i} \subset \Fc_{t_0}(\hat{M})
$$
 with non-empty interior.

\begin{lemma}\label{the n}
 For $n \in \Nb$ sufficiently large, $\Phi_{t_0}^n$ is injective on $K_1 \times \cdots \times K_m$. \end{lemma}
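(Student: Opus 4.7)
The plan is to exploit the injectivity of the full (untruncated) map $\Phi_{t_0}$ already established in Lemma~\ref{lem:total map is injective}, and to promote this to injectivity of a finite truncation $\Phi_{t_0}^N$ on the compact set $K := K_1 \times \cdots \times K_m$ by invoking the local Noetherian property for real analytic varieties (Theorem~\ref{thm:noetherian property}). The setting is tailor-made for this, since $\Phi_{t_0}^n$ is the projection of $\Phi_{t_0}$ onto finitely many Taylor coefficients and the source is a real analytic manifold.

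First I would check that each $\Phi_{t_0}^n$ is a \emph{real analytic} map from $\Oc_{t_0,1} \times \cdots \times \Oc_{t_0,m}$ into the finite-dimensional vector space $\Pc_n(\Rb^d, \Sc_d)^m \times \Pc_n(\Rb^d, \Rb^d)^{\abs{E}}$. By assumption~\ref{item:each gt is real analytic} together with the remark after Theorem~\ref{thm:continuity implies smoothness in riemannian case in intro}, the metric $\hat g_{t_0}$ is real analytic for some real analytic structure on $\hat M$; hence $\Fc_{t_0}(\hat M)$ and the exponential map $\exp^{\hat g_{t_0}}$ are real analytic, and the finitely many Taylor coefficients through order $n$ appearing in~\eqref{S_g} and~\eqref{T_g} depend real analytically on the base frames.

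Next, for each $n \in \Nb$ I would introduce the coincidence set
$$
Z_n := \left\{ (\beta,\beta') : \Phi_{t_0}^n(\beta) = \Phi_{t_0}^n(\beta') \right\} \subset \left( \Oc_{t_0,1} \times \cdots \times \Oc_{t_0,m}\right)^2,
$$
which is a closed real analytic subvariety (cut out by the finitely many scalar equations obtained by equating the coordinates of $\Phi_{t_0}^n(\beta)$ and $\Phi_{t_0}^n(\beta')$), and note that $Z_1 \supset Z_2 \supset \cdots$ since higher-order truncations only add constraints. The infinite intersection $\bigcap_n Z_n$ consists of those pairs with $\Phi_{t_0}(\beta) = \Phi_{t_0}(\beta')$, which by Lemma~\ref{lem:total map is injective} forces $\beta = \beta'$; thus $\bigcap_n Z_n$ is contained in the diagonal of the source.

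Finally I would apply Theorem~\ref{thm:noetherian property} to the compact set $K \times K$ and the sequence $\{Z_n\}$, producing some $N \in \Nb$ with
$$
(K \times K) \cap Z_N = (K \times K) \cap \bigcap_{n \in \Nb} Z_n,
$$
where nesting collapses the finite intersection $\bigcap_{n=1}^N Z_n$ to $Z_N$. Since the right-hand side sits inside the diagonal, $\Phi_{t_0}^N$ is injective on $K$, as required. I do not foresee a serious obstacle; the only delicate point is that real analyticity (rather than mere smoothness) of $\Phi_{t_0}^n$ is needed to invoke the Noetherian property, which is precisely the role of hypothesis~\ref{item:each gt is real analytic}.
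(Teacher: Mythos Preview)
Your proposal is correct and follows essentially the same route as the paper: the paper defines the difference maps $D_n(\beta,\beta')=\Phi_{t_0}^n(\beta)-\Phi_{t_0}^n(\beta')$ (so your $Z_n=D_n^{-1}(0)$), observes that $\bigcap_n D_n^{-1}(0)=\Delta$ by Lemma~\ref{lem:total map is injective}, and applies Theorem~\ref{thm:noetherian property} on $(K_1\times\cdots\times K_m)^2$ to stabilize the intersection at some finite $N$. Your explicit remark on the nesting $Z_1\supset Z_2\supset\cdots$ and the check of real analyticity are exactly the ingredients the paper uses (the latter is stated just before the lemma).
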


\begin{proof}     For every $n$, define  the difference function
$$
D_n :  \left(\Oc_{t_0,1} \times \cdots \times \Oc_{t_0,m}\right)^2 \rightarrow \Pc_n(\Rb^d, \Sc_d)^m \times \Pc_n(\Rb^d, \Rb^d)^{\abs{E}}
$$
by
$$
D_n\left( \beta_1,\dots, \beta_m, \beta_1^\prime,\dots, \beta_m^\prime\right)= \Phi_{t_0}^n\left(\beta_1,\dots, \beta_m\right) - \Phi_{t_0}^n\left(\beta_1^\prime,\dots, \beta_m^\prime\right).
$$
Let
$$
\Delta  \subset \left(\Oc_{t_0,1} \times \cdots \times \Oc_{t_0,m}\right)^2
$$
denote the diagonal. By Lemma~\ref{lem:total map is injective},
$$
\Delta = \bigcap_{n =1}^\infty D_n^{-1}(0).
$$
So by Theorem~\ref{thm:noetherian property} there exists $N \geq 1$ such that
$$
\left(K_1 \times \cdots \times K_m\right)^2 \cap \Delta = \left(K_1 \times \cdots \times K_m\right)^2 \cap\bigcap_{n =1}^N D_n^{-1}(0).
$$
Then $\Phi_{t_0}^n$ is injective on $K_1 \times \cdots \times K_m$ for all $n \geq N$.
\end{proof}

\subsection*{Step 3: Using the constant rank theorem} Fix $n$ sufficiently large so that $\Phi_{t_0}^n$ is injective on $K_1 \times \cdots \times K_m$. In this step we use the constant rank theorem to deduce that the derivative of the map $\Phi_{t_0}^n$ is injective at some point in $K_1 \times \cdots \times K_m$.

\begin{lemma} There exists some $B_0 \in
{\rm int}(K_1 \times \cdots \times K_m)$ such that the derivative $d(\Phi_{t_0}^n)_{B_0}$ is injective. \end{lemma}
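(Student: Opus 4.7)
The plan is to apply the constant rank theorem to $\Phi_{t_0}^n$. Since the metric $\hat{g}_{t_0}$ is real analytic, its exponential map is real analytic on the tangent bundle; consequently the Taylor coefficients at the origin of $(\exp^{\hat{g}_{t_0}}_{\hat p}\circ L_\beta)^*\hat{g}_{t_0}$ and of the transition maps $(\exp^{\hat{g}_{t_0}}_{\hat p_i}\circ L_{\beta_i})^{-1}\circ(\exp^{\hat{g}_{t_0}}_{\hat p_j}\circ L_{\beta_j})$ depend real analytically on the frames. Thus $\Phi_{t_0}^n$ is a real analytic map from the manifold $X:=\Oc_{t_0,1}\times\cdots\times\Oc_{t_0,m}$ to the finite dimensional vector space $\Pc_n(\Rb^d,\Sc_d)^m\times\Pc_n(\Rb^d,\Rb^d)^{\abs{E}}$, and in particular the map $B\mapsto \rank d(\Phi_{t_0}^n)_B$ is lower semicontinuous on $X$.

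First I would set $D:=\dim X$ and $W:=\interior(K_1\times\cdots\times K_m)$, which is non-empty because each $K_i$ was chosen to have non-empty interior. I would then let $r:=\max\{\rank d(\Phi_{t_0}^n)_B : B\in W\}$; by lower semicontinuity the locus $W':=\{B\in W:\rank d(\Phi_{t_0}^n)_B=r\}$ is open and non-empty, and on $W'$ the map $\Phi_{t_0}^n$ has constant rank $r$.

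The crux of the argument is then a standard injectivity-versus-rank dichotomy. If $r=D$, any $B_0\in W'$ already has $d(\Phi_{t_0}^n)_{B_0}$ injective, which is exactly what is needed. Suppose instead for contradiction that $r<D$ and fix any $B_0\in W'$. Since $\Phi_{t_0}^n$ has constant rank $r$ on the open neighborhood $W'$ of $B_0$, the constant rank theorem provides smooth charts centered at $B_0$ and at $\Phi_{t_0}^n(B_0)$ in which $\Phi_{t_0}^n$ is expressed as the linear projection $(u_1,\dots,u_D)\mapsto(u_1,\dots,u_r,0,\dots,0)$. In particular, the fiber of $\Phi_{t_0}^n$ through $B_0$ locally contains a smooth submanifold of positive dimension $D-r$ on which $\Phi_{t_0}^n$ is constant, contradicting the injectivity of $\Phi_{t_0}^n$ on $K_1\times\cdots\times K_m\supset W'$ from Lemma~\ref{the n}. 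Hence $r=D$, as required.

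I do not anticipate a real obstacle here: the argument is just the observation that a smooth map injective on an open set must have an injective differential somewhere on that set. The only technical points to verify are the smoothness (in fact, real analyticity) of $\Phi_{t_0}^n$, which follows immediately from the real analyticity of the exponential map and the definitions~\eqref{S_g} and \eqref{T_g}, and the non-emptiness of $W$, which was arranged at the outset.
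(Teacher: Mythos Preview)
Your proof is correct and follows essentially the same approach as the paper: pick a point of maximal rank in the interior, use lower semicontinuity of the rank to get an open set where the rank is constant, apply the constant rank theorem, and then use the injectivity of $\Phi_{t_0}^n$ on $K_1\times\cdots\times K_m$ (Lemma~\ref{the n}) to force the rank to be full. The only cosmetic difference is that you invoke real analyticity of $\Phi_{t_0}^n$, whereas the paper only needs smoothness here; the real analyticity is true but superfluous for this lemma.
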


\begin{proof} Fix $B_0 \in {\rm int}( K_1 \times \cdots \times K_m)$ such that $d(\Phi_{t_0}^n)_{B_0}$ has maximal rank
   amongst all interior points. Since the rank of $d(\Phi_{t_0}^n)_{B}$ is lower semicontinuous, there exists a neighborhood of $B_0$ where $d(\Phi_{t_0}^n)_{B}$ has constant rank. By the constant rank theorem, see~\cite[Theorem 4.12]{LeeSmthMflds},
   there  exist  local coordinates centered at $B_0$ and local coordinates centered at $\Phi_{t_0}^n(B_0)$ such that the map $\Phi_{t_0}^n$ has the form
$$
(x_1, \dots, x_{D_1}) \mapsto (x_1, \dots, x_r, \underbrace{0, \dots, 0}_{(D_2-r)-\text{ times}})
$$
with $D_1 := m \dim \Fc_{t_0}(\hat{M})$, $D_2 := \dim( \Pc_n(\Rb^d, \Sc_d)^m \times \Pc_n(\Rb^d, \Rb^d)^{\abs{E}})$, and $r := {\rm rank} \, d(\Phi_{t_0}^n)_{B_0}$. Since $\Phi_{t_0}^n$ is injective on $K_1 \times \cdots\times K_m$, we must have $r = D_1$. Hence $d(\Phi_{t_0}^n)_{B_0}$ is injective.
\end{proof}

\subsection*{Step 4: Using the implicit function theorem} In this step we use a consequence of the implicit function theorem:  immersions are local embeddings.

Since $(\hat{g}_t)_{t \in \Tc}$ is a smooth family,
$$
\bigcup_{t \in \Tc} \{ t\} \times \Fc_t(\hat{M})^m
$$
is a smooth submanifold of $\Tc_0 \times (\oplus_{i=1}^d T\hat{M})^m$. Recall that $\Oc_{t,i}=\mathcal F_t(\hat{M})|_{
B_{\hat g_{t_0}}
(x_i,r)}$ is an open subset of $\Fc_t(\hat{M})$ and hence
\begin{equation}\label{eqn:defn of E tilde}
   \widetilde E : = \bigcup_{t \in \Tc_0} \{t\} \times \left( \Oc_{t,1} \times \cdots \times \Oc_{t,m}\right)
\end{equation}
is an open subset of $\cup_{t \in \Tc} \{ t\} \times \Fc_t(\hat{M})^m$. Further, the map
\begin{align*}
G : \widetilde E \rightarrow \Tc_0 \times \Pc_n(\Rb^d, \Sc_d)^m \times \Pc_n(\Rb^d, \Rb^d)^{\abs{E}}
\end{align*}
   defined by
$$
G(t,B) = (t,\Phi_t^n(B)),
$$
is smooth.  Since the rank of $d(\Phi_{t}^n)_{B}$ is lower semicontinuous,
 there exists a neighborhood $\mathcal{D}$ of $(t_0, B_0)$ in $\widetilde E$ such that for all $(t,B) \in \mathcal{D}$ the derivative $d(\Phi_{t}^n)_{B}$    is injective. Since
$$
dG_{(t,B)} = \begin{pmatrix} 1 & 0 \\ \ast &  d(\Phi_{t}^n)_{B} \end{pmatrix},
$$
then $d(G)_{(t,B)}$    is injective for all $(t,B) \in \mathcal{D}$. Since every immersion is a local embedding, see~\cite[Theorem 4.25]{LeeSmthMflds}, after shrinking $\mathcal{D}$, we can assume that
$$
G(\mathcal{D}) \subset \Tc_0 \times \Pc_n(\Rb^d, \Sc_d)^m \times \Pc_n(\Rb^d, \Rb^d)^{\abs{E}}
$$
is an embedded submanifold and $G|_{\mathcal{D}} : \mathcal{D} \rightarrow G(\mathcal{D})$ is a diffeomorphism. Therefore, $G|_{\mathcal D}^{-1}\colon G(\mathcal D)\to\mathcal D$ exists and hence it is smooth.

To be able to use the inverse, we need some preparation.

\subsection*{Step 5: The 1-jet is smooth at some
 point}
  Let $$
\Fc_t(M) \rightarrow M
$$
be the orthonormal frame bundle with respect to the metric $g_t$ and let $\Fc_t(p) := \Fc_t(M)|_{p}$ denote the fiber above a point $p \in M$.

Suppose $B_0 = (\beta_{01},\dots, \beta_{0m})$ is as above and $\beta_{0i}$ has base point $z_i \in \hat{M}$. As in \eqref{def-beta}, we write
$$
\beta_{0i}=\big(z_i, (E_{i,1}, \dots, E_{i,d})\big).
$$
 Let $p_i := F_{t_0}^{-1}(z_i) \in M$ and
 $$
\alpha_{0i} :=
\big(p_i,\left(d(F_{t_0})_{p_i}^{-1}E_{i,1}, \dots, d(F_{t_0})_{p_i}^{-1}E_{i,d}\right)\big) \in
 \Fc_{t_0}(p_i).
$$

Next fix an open set $\Tc_1 \subset \Tc_0$ containing $t_0$ and a smooth map $\gamma$ defined on $\Tc_1$ such that $\gamma(t_0) = (\alpha_{01},\dots, \alpha_{0m})$ and
\begin{equation}\label{def-gamma(t)}
\gamma(t)= (\gamma_1(t), \dots, \gamma_m(t)) \in \Fc_t(p_1) \times \cdots \times \Fc_t(p_m)
\end{equation}
for all $t \in \Tc_1$.

  Using the continuity of $t \mapsto F_t$, we observe the following.

\begin{lemma}\label{compactness+} The map
$$
t\in \mathcal T_1 \longmapsto dF_t\gamma(t)\in \Fc_t(\hat{M})^m
$$
is continuous. \end{lemma}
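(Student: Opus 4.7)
The idea is to exploit Equation~\eqref{efp}: since $F_t$ is an isometry, we have
$$
F_t \circ \exp^{g_t}_{p_i} = \exp^{\hat g_t}_{F_t(p_i)} \circ dF_t|_{p_i},
$$
which lets us recover $dF_t|_{p_i}$ from $F_t$ evaluated on a small metric ball around $p_i$, provided we can invert $\exp^{\hat g_t}_{F_t(p_i)}$ continuously in $t$.

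I would first fix $i \in \{1,\dots,m\}$ and set $\hat p_i := F_{t_0}(p_i)$. Since $(p,t) \mapsto F_t(p)$ is continuous by hypothesis, $F_t(p_i) \to \hat p_i$ as $t \to t_0$. Using the lower semicontinuity of the injectivity radius jointly in $(x,t)$ (cited in the proof of Lemma~\ref{lem:Phit well defined}) together with the continuity of $t \mapsto F_t(p_i)$, after possibly shrinking $\Tc_1$ I can find $\rho>0$ such that for every $t \in \Tc_1$ the map $\exp^{\hat g_t}_{F_t(p_i)}$ restricts to a diffeomorphism from $B_{T_{F_t(p_i)}\hat M}(0,\rho)$ onto $B_{\hat g_t}(F_t(p_i),\rho)$. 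By Observation~\ref{obs:smoothness of exponential maps}, the map
$$
\widehat{\mathsf{E}}: \bigsqcup_{t \in \Tc_1} T\hat M \times \{t\} \longrightarrow \hat M \times \hat M \times \Tc_1, \qquad \widehat{\mathsf{E}}(w,t)=\bigl(\pi(w),\, \exp^{\hat g_t}_{\pi(w)}(w),\, t\bigr),
$$
is smooth, and its derivative at $(0_{\hat p_i},t_0)$ is invertible. Hence the implicit function theorem produces a smooth local inverse $\widehat{\mathsf{E}}^{-1}$ defined on a neighborhood of $(\hat p_i,\hat p_i,t_0)$; after further shrinking $\Tc_1$ and restricting in the space variable, this provides a jointly continuous family of inverses $\mathsf{E}_t^{-1}$ of $\exp^{\hat g_t}_{F_t(p_i)}$ on a ball around $F_t(p_i)$ of a uniform radius.

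Now write $\gamma_i(t) = (v_{i,1}(t),\dots,v_{i,d}(t))$, which is smooth in $t$ by~\eqref{def-gamma(t)}. Pick $\eta \in (0,\rho)$ so that all required evaluations stay inside the balls on which $\mathsf{E}_t^{-1}$ is defined; since $dF_t|_{p_i}$ is a linear isometry of inner product spaces, $\|\eta\, dF_t|_{p_i}(v_{i,j}(t))\|_{\hat g_t}=\eta$, so this is automatic for $\eta$ small. Then Equation~\eqref{efp} gives
$$
\eta\, dF_t|_{p_i}\bigl(v_{i,j}(t)\bigr) = \mathsf{E}_t^{-1}\Bigl( F_t\bigl(\exp^{g_t}_{p_i}(\eta v_{i,j}(t))\bigr)\Bigr).
$$
Each ingredient on the right-hand side is continuous in $t$: the point $\exp^{g_t}_{p_i}(\eta v_{i,j}(t))$ by Observation~\ref{obs:smoothness of exponential maps} applied to $g_t$ and smoothness of $\gamma_i$; its image under $F_t$ by the joint continuity hypothesis of Theorem~\ref{thm:continuity implies smoothness in riemannian case}; and finally $\mathsf{E}_t^{-1}$ by the previous paragraph. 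Dividing by $\eta$, the map $t \mapsto dF_t|_{p_i}(v_{i,j}(t))$ is continuous into $T\hat M$ for each $i,j$, which is exactly the continuity of $t \mapsto dF_t\, \gamma(t)$ in $\Fc_t(\hat M)^m$.

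\textbf{Main obstacle.} The only technically delicate point is guaranteeing that the inverse exponential maps $\mathsf{E}_t^{-1}$ exist on balls of a uniform radius around the moving basepoints $F_t(p_i)$ and depend continuously on $t$; this is the only place where the hypotheses that $(p,t) \mapsto F_t(p)$ is continuous and that $\{\hat g_t\}$ varies smoothly are both used. Once that uniform inversion is in hand, Equation~\eqref{efp} turns the lemma into a direct composition of continuous maps.
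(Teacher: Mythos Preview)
Your argument is correct and follows essentially the same route as the paper's proof: both invert Equation~\eqref{efp} to write $d(F_t)_{p_i}$ as $(\exp^{\hat g_t}_{F_t(p_i)})^{-1}\circ F_t\circ \exp^{g_t}_{p_i}$ near the origin and then appeal to the continuity of $F_t$ together with Observation~\ref{obs:smoothness of exponential maps}. The only difference is that you spell out in detail the construction of a jointly continuous inverse $\mathsf{E}_t^{-1}$ via the implicit function theorem, whereas the paper leaves this step implicit in its one-line invocation of Observation~\ref{obs:smoothness of exponential maps}.
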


\begin{remark} Here and in what follows $dF_{t}\gamma(t)=(dF_{t}\gamma_1(t),\dots, dF_{t}\gamma_m(t))$. \end{remark}

\begin{proof} It suffices to fix $1 \leq i \leq m$ and show that $t \mapsto d(F_t)_{p_i}$ is continuous.  Notice that
$$
F_t = {\exp}^{\hat g_t}_{F_t(p_i)} \circ d(F_t)_{p_i} \circ \left(\exp^{ g_t}_{p_i}\right)^{-1}
$$
near $p_i$. So
$$
d(F_t)_{p_i} =\left( {\exp}^{\hat g_t}_{F_t(p_i)}\right)^{-1} \circ F_t \circ \exp^{g_t}_{p_i}
$$
near $0 \in T_{p_i} M$. Since $t \mapsto F_t$ is continuous, Observation~\ref{obs:smoothness of exponential maps} implies that $t \mapsto d(F_t)_{p_i}$ is continuous.
\end{proof}

Since
$dF_{t_0}\gamma(t_0) = B_0$
 and $t \mapsto dF_t\gamma(t)$ is continuous, by shrinking $\Tc_1$ we may also assume that
$$
(t, dF_t\gamma(t)) \in \mathcal{D}
$$
for all $t \in \Tc_1$.

Define $H : \Tc_1 \rightarrow \Tc_1 \times \Pc_n(\Rb^d, \Sc_d)^m \times \Pc_n(\Rb^d, \Rb^d)^{\abs{E}}$ by
$$
H(t) =  \Big( t, \big( \pi_nS_{g_t}(\gamma_1(t)), \dots, \pi_nS_{g_t}(\gamma_m(t))\big), \big( \pi_nT_{g_t}(\gamma_i(t), \gamma_j(t)) : (i,j) \in E \big) \Big)
$$
where we abuse notation and use $\pi_n$ to denote both projections
$$
 \Pc(\Rb^d, \Sc_d) \rightarrow  \Pc_n(\Rb^d, \Sc_d) \quad \text{and} \quad \Pc(\Rb^d, \Rb^d) \rightarrow \Pc_n(\Rb^d, \Rb^d).
$$

\begin{lemma}\label{lem:explicit constraints}
After possibly shrinking $\Tc_1$, $H$ is well defined, smooth, and
\begin{equation*}
H(t) = G\left(t, dF_t\gamma(t)\right)
\end{equation*}
for all $t \in \Tc_1$.
\end{lemma}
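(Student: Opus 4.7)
The plan is to verify three things in sequence: that $H$ is defined on a neighborhood of $t_0$, that the claimed equation $H(t) = G(t, dF_t\gamma(t))$ holds on this neighborhood, and finally that $H$ is smooth. The nontrivial content is the equation, which will follow directly from the fact that each $F_t$ is an isometry; once that is established, smoothness will reduce to smoothness of exponential maps in $t$ (Observation~\ref{obs:smoothness of exponential maps}), and well-definedness will be a continuity/shrinking argument.

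First, for well-definedness, I would note that $dF_{t_0}\gamma(t_0) = (\beta_{01},\dots,\beta_{0m}) = B_0$, and by the choice made just above the lemma together with Lemma~\ref{compactness+}, after possibly shrinking $\Tc_1$ we have $(t, dF_t\gamma(t)) \in \mathcal{D} \subset \widetilde E$ for all $t \in \Tc_1$. In particular, each $dF_t\gamma_i(t)$ lies in $\Oc_{t,i}$, so the quantities $S_{\hat g_t}(dF_t\gamma_i(t))$ and $T_{\hat g_t}(dF_t\gamma_i(t), dF_t\gamma_j(t))$ are defined (by Lemma~\ref{lem:Phit well defined}). Since $F_t$ is an isometry, distances and injectivity radii pull back exactly, so the analogous conditions needed to define $S_{g_t}(\gamma_i(t))$ and $T_{g_t}(\gamma_i(t), \gamma_j(t))$ (namely that the relevant base points lie within each other's injectivity radius) are satisfied; so $H$ is well-defined on $\Tc_1$.

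Next, I would prove the equality $H(t) = G(t, dF_t\gamma(t))$ by checking it componentwise. The key point is the following transfer identity: since $F_t$ is an isometry, Equation~\eqref{efp} gives $\exp^{\hat g_t}_{F_t(p_i)} \circ d(F_t)_{p_i} = F_t \circ \exp^{g_t}_{p_i}$, and using $L_{dF_t\gamma_i(t)} = d(F_t)_{p_i} \circ L_{\gamma_i(t)}$, we obtain
\[
\left(\exp^{\hat g_t}_{F_t(p_i)} \circ L_{dF_t\gamma_i(t)}\right)^* \hat g_t = \left(\exp^{g_t}_{p_i} \circ L_{\gamma_i(t)}\right)^* F_t^*\hat g_t = \left(\exp^{g_t}_{p_i} \circ L_{\gamma_i(t)}\right)^* g_t,
\]
which yields $S_{\hat g_t}(dF_t\gamma_i(t)) = S_{g_t}(\gamma_i(t))$, and hence equality after applying $\pi_n$. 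An analogous computation for the transition maps, where the factors of $F_t$ and $F_t^{-1}$ cancel in the middle, gives $T_{\hat g_t}(dF_t\gamma_i(t), dF_t\gamma_j(t)) = T_{g_t}(\gamma_i(t), \gamma_j(t))$ for each $(i,j) \in E$, and hence the desired identity after $\pi_n$.

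Finally, for smoothness, I would observe that $t \mapsto \gamma_i(t)$ is smooth by construction and $\{g_t\}_{t \in \Tc}$ is a smooth family, so by Observation~\ref{obs:smoothness of exponential maps} each map $(v, t) \mapsto \exp^{g_t}_{p_i}(L_{\gamma_i(t)} v)$ is smooth on a neighborhood of $\{0\} \times \Tc_1$, and likewise for the compositions appearing in $T_{g_t}$. The pulled-back tensor $(\exp^{g_t}_{p_i} \circ L_{\gamma_i(t)})^* g_t$ is therefore smooth in $(x,t)$, so its Taylor polynomial of order $n$ at $x=0$ depends smoothly on $t$, and similarly for the transition maps. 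Thus $H$ is smooth. The only real obstacle along the way is the bookkeeping for the transition-function identity, but the cancellation $F_t^{-1} \circ F_t = \id$ makes it immediate once written out carefully.
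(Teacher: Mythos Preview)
Your proposal is correct and follows essentially the same approach as the paper: both establish well-definedness of $H$, prove the identity $H(t)=G(t,dF_t\gamma(t))$ via the isometry relation $\exp^{\hat g_t}_{F_t(p_i)}\circ d(F_t)_{p_i}=F_t\circ\exp^{g_t}_{p_i}$, and then note smoothness. The only minor difference is in the well-definedness step: the paper verifies $\injrad_{g_t}(p_i)-\dist_{g_t}(p_i,p_j)>0$ directly at $t_0$ (using that $F_{t_0}$ is an isometry) and then shrinks $\Tc_1$ via lower-semicontinuity of the injectivity radius and continuity of the distance, whereas you transfer the already-established well-definedness of $\Phi_t$ on $\Oc_{t,1}\times\cdots\times\Oc_{t,m}$ back to $M$ through the isometry $F_t$ for every $t$; your route is slightly slicker since it avoids repeating the semicontinuity argument.
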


\begin{proof}  It is clear that each $S_{g_t}(\gamma_i(t))$ is well defined. To check that $T_{g_t}(\gamma_i(t), \gamma_j(t))$ can be defined by \eqref{T_g},  we need to show that if $(i,j) \in E$, then
$$
\injrad_{g_t}(p_i) - \dist_{g_t}(p_i,p_j) > 0
$$
(recall that $\gamma_i(t)$ has constant base point $p_i$). At $t_0$, we have
$$
\injrad_{g_{t_0}}(p_i) - \dist_{g_{t_0}}(p_i,p_j)  = \injrad_{\hat{g}_{t_0}}(z_i) - \dist_{\hat{g}_{t_0}}(z_i,z_j) > 0
$$
since $F_{t_0}$ is an isometry mapping $p_1,\dots, p_m$ to $z_1,\dots, z_m$. Then arguing as in the proof of Lemma~\ref{lem:Phit well defined}, we can shrink $\Tc_1$ so that
$$
\injrad_{g_t}(p_i) - \dist_{g_t}(p_i,p_j) > 0
$$
for all $t \in \Tc_1$ and $(i,j) \in E$.

It follows from the definitions that $H$ is smooth. For the final assertion, note that
\begin{align*}
G  (& t,  dF_t\gamma(t))  =(t,\Phi^n_t(dF_t\gamma(t)))\\
&= \Bigl(t,\bigl(\pi_n \left( S_{\hat{g}_t}(dF_t\gamma_j(t))\right)  : 1\leq j\leq m\bigr) ,    \bigl( \pi_n\left( T_{\hat{g}_t}\left(dF_t\gamma_i(t), dF_t\gamma_j(t)\right)\right) : (i,j) \in E \bigr) \Bigr).
\end{align*}
Since $F_t$ is an isometry, we have
$$
{\exp}^{\hat g_t}_{F_t(p_i)} \circ d(F_t)_{p_i} = F_t \circ \exp^{ g_t}_{p_i}.
$$
So, by definition,
$$
S_{\hat{g}_t}(dF_t\gamma_j(t)) = S_{g_t}( \gamma_j(t)) \quad \text{and} \quad  T_{\hat{g}_t}\left(dF_t\gamma_i(t), dF_t\gamma_j(t))=T_{g_t}(\gamma_i(t), \gamma_j(t)\right).
$$
Thus $H(t) = G\left(t, dF_t\gamma(t)\right)$.
\end{proof}

\begin{lemma}\label{lem:one jet is smooth at a point} The map
$$
t \in \Tc_1 \mapsto (F_t(p_1), d(F_t)_{p_1})
$$
is smooth.
\end{lemma}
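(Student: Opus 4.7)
The plan is simply to splice together the pieces assembled in Steps 3 and 4 with Lemma~\ref{lem:explicit constraints}. After the constant-rank / local-embedding argument, we have a neighborhood $\mathcal D$ of $(t_0, B_0)$ in $\widetilde E$ on which $G|_{\mathcal D}\colon\mathcal D \to G(\mathcal D)$ is a diffeomorphism onto an embedded submanifold, and $\Tc_1$ was chosen precisely so that $(t, dF_t\gamma(t)) \in \mathcal D$ for all $t\in\Tc_1$. By Lemma~\ref{lem:explicit constraints}, $H(t) = G(t, dF_t\gamma(t))$, so I can solve for the unknown curve by composing with the smooth inverse:
$$
(t, dF_t\gamma(t)) \;=\; G|_{\mathcal D}^{-1}\bigl(H(t)\bigr) \qquad (t \in \Tc_1).
$$
Both $H$ and $G|_{\mathcal D}^{-1}$ are smooth, so $t \mapsto dF_t\gamma(t)$ is smooth on $\Tc_1$; in particular $t \mapsto dF_t\gamma_1(t)$ is a smooth curve in $\bigcup_{t \in \Tc_1} \{t\}\times\Fc_t(\hat M)$.

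From this I extract the 1-jet of $F_t$ at $p_1$ by unpacking what a frame is. Write $\gamma_1(t) = (p_1, (v_1(t),\dots,v_d(t)))$; the base point is the fixed point $p_1$, and smoothness of $\gamma$ gives smoothness of each $v_i \colon \Tc_1 \to T_{p_1}M$. The frame $dF_t\gamma_1(t)$ has base point $F_t(p_1)$ and vectors $d(F_t)_{p_1}v_i(t)$, so the smoothness of $t\mapsto dF_t\gamma_1(t)$ inside the bundle $\bigcup_{t}\{t\}\times \Fc_t(\hat M)$ yields both the smoothness of $t \mapsto F_t(p_1)$ (by composing with the bundle projection) and the smoothness of each $t \mapsto d(F_t)_{p_1}v_i(t)$. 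Since $(v_1(t),\dots,v_d(t))$ is, for each $t$, a basis of $T_{p_1}M$ depending smoothly on $t$, the linear map $d(F_t)_{p_1}$ is recovered smoothly from its values on this smooth frame. This gives the desired smoothness of $t \mapsto (F_t(p_1), d(F_t)_{p_1})$.

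There is no real obstacle left at this stage: the essential analytic content (injectivity of $\Phi^n_{t_0}$ coming from the local-isometry extension, the local Noetherian reduction to finite $n$, the promotion of maximal rank to full rank via injectivity, and the identification $H = G(\cdot, dF_\cdot\gamma(\cdot))$) has already been handled in Steps 1--4. The only subtle point I would double-check is that the manifold structure on $\bigcup_{t}\{t\}\times\Fc_t(\hat M)^m$ is the natural one coming from the smooth family $\{\hat g_t\}$, so that ``smooth curves'' in this ambient space really do correspond to smooth $t$-dependence of the frame at its base point; this is built into the definition~\eqref{eqn:defn of E tilde} of $\widetilde E$.
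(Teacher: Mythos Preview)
Your proposal is correct and follows essentially the same approach as the paper: invert $G|_{\mathcal D}$ on the image of $H$ to see that $t\mapsto dF_t\gamma(t)$ is smooth, project to the first factor to get smoothness of $t\mapsto dF_t\gamma_1(t)$, and then read off the $1$-jet at $p_1$. The paper compresses the last step into a single sentence (using that $\gamma_1(t)\in\Fc_t(p_1)$), whereas you spell out the frame-unpacking explicitly, but the content is identical.
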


\begin{proof}   Recall that $\widetilde E$ was defined in Equation~\eqref{eqn:defn of E tilde}. Let $\rho : \widetilde E \rightarrow \Fc_t(\hat{M})$ denote the projection
$$
\rho(t, \beta_1,\dots, \beta_m) = \beta_1.
$$
Then  the map
$$
 t \in \mathcal T_1 \longmapsto  dF_t \gamma_1(t) =\left(\rho \circ (G|_{\mathcal{D}})^{-1} \circ H\right)(t)
$$
is smooth. Since $\gamma_1(t) \in \Fc_t(p_1)$ for all $t \in \Tc_1$, this implies that
$$
t \in \Tc_1 \mapsto (F_t(p_1), d(F_t)_{p_1})
$$
is smooth.
\end{proof}

\subsection*{Step 6: Finishing the proof} Using the exponential map and Lemma~\ref{lem:one jet is smooth at a point}, we finish the proof.

\begin{lemma} The map
$$
(p,t) \in M\times  \Tc_1 \mapsto F_t(p)
$$
is smooth.
\end{lemma}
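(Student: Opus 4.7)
The plan is to propagate the smoothness of the $1$-jet at $p_1$ established in Lemma~\ref{lem:one jet is smooth at a point} to every point of $M$, using the isometry identity~\eqref{efp} and a connectedness argument. The central local formula is
$$
F_t(p) = \exp^{\hat g_t}_{F_t(q)} \!\Bigl( d(F_t)_q \bigl( (\exp^{g_t}_q)^{-1}(p) \bigr) \Bigr),
$$
valid wherever $(\exp^{g_t}_q)^{-1}(p)$ makes sense. Observation~\ref{obs:smoothness of exponential maps} ensures that $\exp^{g_t}_q$ and $\exp^{\hat g_t}_{F_t(q)}$ vary smoothly jointly in $(v, t)$, and the implicit function theorem applied to $(v, t) \mapsto (\exp^{g_t}_q(v), t)$ at $(0, t_*)$ (where the partial derivative in $v$ is the identity, by the definition of the exponential map) provides a smooth inverse on a neighborhood of $(q, t_*)$. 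Combining these gives the following \emph{key claim}: for any $q \in M$, $t_* \in \Tc_1$, and any $r < \injrad_{g_{t_*}}(q)$, if $t \mapsto (F_t(q), d(F_t)_q)$ is smooth on some neighborhood of $t_*$ in $\Tc_1$, then $(p,t) \mapsto F_t(p)$ is smooth on $B_{g_{t_*}}(q, r) \times V$ for a suitable open neighborhood $V \subset \Tc_1$ of $t_*$.

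I would then fix $t_* \in \Tc_1$ and show that
$$
S_{t_*} := \bigl\{ p \in M : (p',t') \mapsto F_{t'}(p') \text{ is smooth on a neighborhood of } (p, t_*) \bigr\}
$$
equals $M$ by connectedness. Non-emptiness follows because $p_1 \in S_{t_*}$: Lemma~\ref{lem:one jet is smooth at a point} gives smoothness of $t \mapsto (F_t(p_1), d(F_t)_{p_1})$ on all of $\Tc_1$, and the key claim applied at $q = p_1$ produces a neighborhood of $(p_1, t_*)$ on which the full map is smooth. Openness of $S_{t_*}$ is immediate from the definition, since smoothness on a neighborhood of $(p, t_*)$ is automatically smoothness on a neighborhood of $(p', t_*)$ for $p'$ nearby. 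The main obstacle is closedness, and this is where the quantitative size estimate in the key claim is crucial.

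Suppose $p_n \in S_{t_*}$ with $p_n \to p$. For each $n$, partial differentiation in $p'$ at $p' = p_n$ of the smooth map $(p', t') \mapsto F_{t'}(p')$ shows that $t \mapsto (F_t(p_n), d(F_t)_{p_n})$ is smooth on a neighborhood of $t_*$, so the key claim applied at $p_n$ extends the smoothness of $(p', t') \mapsto F_{t'}(p')$ to $B_{g_{t_*}}(p_n, r_n) \times V_n$ for any $r_n < \injrad_{g_{t_*}}(p_n)$. Since $\injrad_{g_{t_*}}$ is continuous (see~\cite[Proposition 10.37]{LeeRiemGeom}) and positive at $p$, we may choose $r_n$ bounded below by $\injrad_{g_{t_*}}(p)/2$ for $n$ large, while $\dist_{g_{t_*}}(p_n, p) \to 0$; hence eventually $p \in B_{g_{t_*}}(p_n, r_n)$, placing $(p, t_*)$ in the smooth region and giving $p \in S_{t_*}$. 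Connectedness of $M$ now forces $S_{t_*} = M$, and since $t_* \in \Tc_1$ was arbitrary, the map $(p,t) \mapsto F_t(p)$ is smooth on all of $M \times \Tc_1$.
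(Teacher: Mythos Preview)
Your approach is correct and uses the same central ingredients as the paper: the local formula $F_t = \exp^{\hat g_t}_{F_t(q)} \circ d(F_t)_q \circ (\exp^{g_t}_q)^{-1}$ together with injectivity-radius bounds to propagate the smoothness of the $1$-jet from $p_1$ to all of $M$. The paper packages the propagation differently: it fixes a target point $q_0$, chooses a path $\sigma$ from $p_1$ to $q_0$, obtains a uniform $r>0$ with $\injrad_{g_t}(\sigma(s))>r$ for all $(s,t)\in[0,1]\times\overline{\Tc_1}$ via lower semicontinuity, and then walks along finitely many overlapping $r$-balls, carrying the $1$-jet smoothness inductively from $\sigma(0)=p_1$ to $\sigma(1)=q_0$ by the same formula. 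Your open--closed argument and the paper's finite-chain argument are standard, interchangeable ways to exploit connectedness; neither has a real advantage here.

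One point to tighten: your justification of the key claim applies the inverse function theorem only at $(0,t_*)$, which yields a smooth inverse of $(v,t)\mapsto(\exp^{g_t}_q(v),t)$ merely on a neighborhood of $(q,t_*)$, not on all of $B_{g_{t_*}}(q,r)\times V$. For the quantitative radius $r$ (which you correctly flag as essential in the closedness step), you should instead note that $\exp^{g_{t_*}}_q$ is a diffeomorphism on $B_{T_qM}(0,\injrad_{g_{t_*}}(q))$, so its derivative is invertible at \emph{every} $v_0$ with $\|v_0\|_{g_{t_*}}<r$; applying the inverse function theorem at each such $(v_0,t_*)$ and using compactness of the closed $r$-ball in $T_qM$ (together with lower semicontinuity of $t\mapsto\injrad_{g_t}(q)$ to ensure injectivity of $\exp^{g_t}_q$ persists) then produces a single $V$ on which the inverse is defined and smooth on $B_{g_{t_*}}(q,r)\times V$.
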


\begin{proof}  It suffices to fix  $q_0 \in M$ and prove that the map is smooth in a neighborhood of $\{q_0\} \times \Tc_1$.      Fix a smooth path $\sigma : [0,1] \rightarrow M$ with $\sigma(0)= p_1$ and $\sigma(1)=q_0$.   Since the map
$$
(p,t) \in M \times \Tc \mapsto \injrad_{g_{t}}(p)
$$
is lower semi-continuous, (see~\cite[Remark on pg. 170]{Ehrlich}), there exists $r > 0$ such that
$$
\injrad_{g_t}(\sigma(s)) > r
$$
for every $s \in [0,1]$ and $t \in \overline{\Tc_1}$.

 By our choice of $r>0$, when $t \in \overline{\Tc_1}$ and $s \in [0,1]$,
\begin{equation}\label{eqn:local equation for Ft}
F_t =  {\exp}^{\hat g_t}_{F_t(\sigma(s))}  \circ d(F_t)_{\sigma(s)} \circ \left(\exp^{ g_t}_{\sigma(s)}\right)^{-1}
\end{equation}
on $B_{g_t}(\sigma(s), r)$.

Next fix $0 = s_0 < \dots < s_k =1$ such that
$$
B_{g_t}(\sigma(s_i), r/2) \cap B_{g_t}(\sigma(s_{i+1}), r/2) \neq \emptyset
$$
for all $t \in \overline{\Tc_1}$ and $i =0,\dots, k-1$. Then by Equation~\eqref{eqn:local equation for Ft},
$$
F_t(\sigma(s_{i+1})) = {\exp}^{\hat g_t}_{F_t(\sigma(s_i))} \circ d(F_t)_{\sigma(s_{i})} \circ \left(\exp^{ g_t}_{\sigma(s_i)}\right)^{-1}(\sigma(s_{i+1}))
$$
and
$$
d(F_t)_{\sigma(s_{i+1})} = d\left( {\exp}^{\hat g_t}_{F_t(\sigma(s_i))} \circ d(F_t)_{\sigma(s_{i})} \circ \left(\exp^{ g_t}_{\sigma(s_i)}\right)^{-1}\right)_{\sigma(s_{i+1})}
$$
for all $t \in \Tc_1$ and $i =0,\dots, k-1$. Since the map
$$
t \mapsto \left(F_t(p_1), d(F_t)_{p_1}\right)=\left(F_t(\sigma(s_0)), d(F_t)_{\sigma(s_0)}\right)
$$
is smooth on $\Tc_1$, by induction and Observation~\ref{obs:smoothness of exponential maps} the map
$$
t \mapsto \left(F_t(q_0), d(F_t)_{q_0}\right)=\left(F_t(\sigma(s_k)), d(F_t)_{\sigma(s_k)}\right)
$$
is smooth on $\Tc_1$. Then by Equation~\eqref{eqn:local equation for Ft}, the map
$$
(q,t) \in \bigcup_{t \in \Tc_1}  B_{g_t}(q_0, r)\times \{t\} \mapsto F_t(q)
$$
is smooth. Since $q_0 \in M$ was arbitrary, this completes the proof.
\end{proof}

\section{Smoothness of families of isometries between compact manifolds}

In this section we use Theorem~\ref{thm:continuity implies smoothness in riemannian case} to prove Theorem~\ref{thm:riemannian cpct case}, which we restate here.

\begin{theorem}\label{thm:riemannian cpct case in paper}
Suppose $M$ and $\hat{M}$ are two compact smooth manifolds. Assume $\{ g_t\}_{t \in \Tc}$ and $\{ \hat{g}_t\}_{t \in \Tc}$ are two smooth families of Riemannian metrics on $M$ and $\hat{M}$, respectively. If for each $t \in \Tc$,
\begin{enumerate}\renewcommand{\labelenumi}{$(\alph{enumi})$}
\item the isometry group $\mathsf{Isom}(M,g_t)$ is trivial, and
\item there is an isometry $F_t : (M, g_t) \rightarrow (\hat{M}, \hat{g}_t)$,
\end{enumerate}
then the map
$$
(p,t) \in M \times \Tc \longmapsto F_t(p) \in \hat{M}
$$
is smooth.
\end{theorem}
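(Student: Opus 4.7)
The plan is to reduce Theorem~\ref{thm:riemannian cpct case in paper} to Theorem~\ref{thm:continuity implies smoothness in riemannian case} by replacing the given families with real analytic ones obtained via the Ricci flow, then verifying the hypotheses of Theorem~\ref{thm:continuity implies smoothness in riemannian case}. Since $M$ and $\hat{M}$ are compact, hypothesis~\ref{item:taming infinity} of Theorem~\ref{thm:continuity implies smoothness in riemannian case} is immediate by taking $U=M$ and $U=\hat M$ respectively, and completeness of the metrics is automatic. So the real content is producing, for each $t$, a real analytic structure in which $g_t$ (resp.\ $\hat g_t$) is real analytic, while keeping smooth dependence on $t$ and preserving the isometry structure.

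For the first step, I would fix a small $\tau > 0$ and apply the Ricci flow for time $\tau$ to each metric, obtaining new families $\{g_t^\tau\}_{t\in\Tc}$ and $\{\hat g_t^\tau\}_{t\in\Tc}$. By a theorem of Bando (later extended by Kotschwar), the time-$\tau$ metric of a Ricci flow on a compact manifold is real analytic in some compatible real analytic atlas; this gives hypothesis~\ref{item:each gt is real analytic}. Since the Ricci flow is diffeomorphism equivariant, any isometry of the initial metric is an isometry of the time-$\tau$ metric, and in particular each $F_t:(M,g_t)\to(\hat M,\hat g_t)$ remains an isometry $(M,g_t^\tau)\to(\hat M,\hat g_t^\tau)$; moreover (by backward uniqueness of the Ricci flow, as in Kotschwar's work) the isometry groups are unchanged, so $\mathsf{Isom}(M,g_t^\tau)$ is still trivial for every $t$. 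This verifies the isometry-theoretic hypotheses of Theorem~\ref{thm:continuity implies smoothness in riemannian case}.

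The second step is to show that $\{g_t^\tau\}$ and $\{\hat g_t^\tau\}$ are still smooth families in $t$. This is the main obstacle: one must enter Hamilton's DeTurck-style proof of short-time existence and run the linearized parabolic system with the parameter $t$ adjoined as an extra (nonevolving) coordinate. Smooth dependence of the solution on $t$ is then standard for smoothly parametrized families of strictly parabolic systems, provided one arranges the initial data, the background metric used in DeTurck's trick, and the gauge-fixing diffeomorphisms to depend smoothly on $t$. The output is that the maps $(p,t)\mapsto g_t^\tau(p)$ and $(p,t)\mapsto \hat g_t^\tau(p)$ are $C^\infty$, as required.

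The remaining step is continuity of $(p,t)\mapsto F_t(p)$, which must be established before invoking Theorem~\ref{thm:continuity implies smoothness in riemannian case}. Because each $F_t$ is an isometry between compact Riemannian manifolds whose metrics vary continuously in $t$, the family $\{F_t\}$ is equicontinuous with uniform bounds on derivatives of all orders on a neighborhood of any $t_0$. Given $(p_0,t_0)$ and sequences $p_n\to p_0$, $t_n\to t_0$, the Arzel\`a--Ascoli theorem extracts a subsequence along which $F_{t_n}\to F_\infty$ uniformly, and the limit $F_\infty$ is an isometry $(M,g_{t_0}^\tau)\to(\hat M,\hat g_{t_0}^\tau)$, equivalently $(M,g_{t_0})\to(\hat M,\hat g_{t_0})$. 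Triviality of $\mathsf{Isom}(M,g_{t_0})$ forces $F_\infty = F_{t_0}$ (any two such isometries differ by a self-isometry of the target, which pulls back to a self-isometry of the source), so $F_{t_n}(p_n)\to F_{t_0}(p_0)$ along the full sequence, yielding continuity. With all hypotheses of Theorem~\ref{thm:continuity implies smoothness in riemannian case} verified for the flowed families, that theorem gives smoothness of $(p,t)\mapsto F_t(p)$, which is the desired conclusion since the underlying map is unchanged by the flow.
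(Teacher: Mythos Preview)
Your proposal is correct and follows essentially the same approach as the paper: flow each metric by Ricci flow to obtain real analyticity (via Bando), preserve the isometry structure via uniqueness and Kotschwar's backward uniqueness, establish smooth dependence of the flowed metrics on $t$ from the proof of short-time existence, prove continuity of $(p,t)\mapsto F_t(p)$ by Arzel\`a--Ascoli together with triviality of the isometry group, and then invoke Theorem~\ref{thm:continuity implies smoothness in riemannian case}. The only minor differences are that the paper obtains smooth parameter dependence by appealing to Hamilton's original Nash--Moser argument rather than the DeTurck reformulation, and in the continuity step the paper passes through a metric-space limit and invokes the Myers--Steenrod theorem rather than using higher-order equicontinuity directly.
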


The idea is to apply the Ricci flow to each metric to obtain real analytic metrics and then to apply Theorem~\ref{thm:continuity implies smoothness in riemannian case}.

\begin{proof}
Given a compact smooth Riemannian manifold $(N,h)$, let $h^{(s)}$, with $s \in [0,s_0)$, denote the solution to the Ricci flow starting at $h$, that is
$$
\frac{\partial}{\partial s} h^{(s)} = -{\rm Ric}(h^{(s)}), \quad h^{(0)} = h.
$$
By~\cite{real_analyticity_Ricci_flow}, for each $s \in (0,s_0)$ the manifold $N$ has a real analytic structure where the metric $h^{(s)}$ is real analytic.

Fix $t_0 \in \Tc$. Using Hamilton's proof of the short time existence of the Ricci flow we obtain the following.

\begin{lemma}[{consequence of the proof of~\cite[Theorem 4.2]{HamiltonRicciFlow}}]\label{lem:consequence of Hamiltons Ricci flow} There are a neighborhood $\Tc_0$ of $t_0$ in $\Tc$ and $s_1> 0$ such that
the map
$$(s,t)\in[0,s_1]\times\Tc_0\longmapsto \left(g_t^{(s)},  \hat{g}_t^{(s)}\right)$$
is smooth.
\end{lemma}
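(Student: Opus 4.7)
The plan is to derive the lemma from Hamilton's proof of short-time existence of the Ricci flow by treating $t$ as an auxiliary smooth parameter. The argument for $\{g_t\}$ on $M$ and for $\{\hat g_t\}$ on $\hat M$ is identical, so I will describe only the former.

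First, I would shrink $\Tc$ to a relatively compact neighborhood $\Tc_0$ of $t_0$. Since $M$ is compact and $\{g_t\}$ is smooth, all $C^k$-norms of $g_t$ (relative to a fixed background metric) are bounded uniformly in $t \in \overline{\Tc_0}$. Hamilton's short-time existence theorem, whose quantitative estimates depend only on such norms, then furnishes a common time $s_1 > 0$ of existence of the Ricci flow starting at $g_t$ for every $t \in \overline{\Tc_0}$.

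For joint smoothness in $(s,t)$, the cleanest route is DeTurck's trick. Fix a smooth background metric $\bar g$ on $M$, let $V(g, \bar g)$ denote DeTurck's vector field, and consider the strictly parabolic quasilinear modification
$$
\frac{\partial}{\partial s}\tilde g_t^{(s)} = -2\,\mathrm{Ric}(\tilde g_t^{(s)}) + \mathcal{L}_{V(\tilde g_t^{(s)},\bar g)} \tilde g_t^{(s)}, \qquad \tilde g_t^{(0)} = g_t.
$$
Standard parabolic regularity (Schauder estimates combined with iterated linearization in $t$) yields smoothness of $(s,t) \mapsto \tilde g_t^{(s)}$ on $[0, s_1] \times \Tc_0$ after possibly shrinking $s_1$ and $\Tc_0$. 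The family of diffeomorphisms $\varphi_t^{(s)}$ of $M$ determined by $\partial_s \varphi_t^{(s)} = -V(\tilde g_t^{(s)}, \bar g) \circ \varphi_t^{(s)}$ with $\varphi_t^{(0)} = \id_M$ is smooth in $(s,t)$ by smooth dependence of ODE solutions on parameters, and one recovers the Ricci flow as $g_t^{(s)} = (\varphi_t^{(s)})^\ast \tilde g_t^{(s)}$. Alternatively, one can follow Hamilton's original Nash--Moser iteration: carrying $t$ through the iteration as an additional tame parameter gives the solution operator as a smooth map jointly in the initial data and in $t$, which is the sense in which the lemma is a direct consequence of Hamilton's proof.

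The main technical point I expect to need care is smoothness in $t$ at the initial slice $s = 0$. Parabolic equations are regularizing for $s > 0$ but need not a priori be smooth up to $s = 0$ in the $t$-direction. This will be handled by noting that the initial data $g_t$ is already smooth in $t$: the $t$-derivatives $\partial_t^\alpha \tilde g_t^{(s)}$ satisfy the parabolic system linearized around $\tilde g_t^{(s)}$, with smooth initial data $\partial_t^\alpha g_t$ and forcing terms that are polynomial in lower-order $t$-derivatives. An induction on $\abs{\alpha}$ then propagates smoothness up to and including $s = 0$.
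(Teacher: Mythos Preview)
Your proposal is correct. The paper's own proof is much terser: it simply observes that Hamilton's original argument for short-time existence proceeds via the Nash--Moser inverse function theorem (in the form stated in Hamilton's companion paper), and that the conclusion of that theorem is that the solution operator is a smooth tame map of the initial data; smooth dependence on $t$ then follows immediately since $t \mapsto g_t$ is smooth. This is exactly the alternative you mention in passing at the end of your second paragraph.

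Your primary route via the DeTurck trick is a genuinely different argument. It is more hands-on and arguably more elementary, since it replaces the Nash--Moser machinery by standard quasilinear parabolic theory plus ODE dependence on parameters for the gauge diffeomorphisms. The cost is that you have to manage the two-step structure (modified flow, then pullback) and argue separately for regularity up to $s=0$, as you do. The paper's approach buys brevity: once one accepts Hamilton's Nash--Moser setup, smoothness in the parameter is a one-line consequence and no separate discussion of $s=0$ is needed. Either approach is acceptable here; your DeTurck argument has the advantage of being self-contained and closer to how most modern expositions of Ricci flow establish short-time existence.
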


\begin{proof} In the proof of Theorem 4.2 in~\cite{HamiltonRicciFlow}, see page 263 in particular, Hamilton establishes short time existence of the Ricci flow by using the version of the Nash-Moser inverse function theorem stated in~\cite[pg. 171]{Hamilton2}. The definition of the map Hamilton constructs  and the conclusion of the inverse function theorem immediately imply the smoothness of the mapping.
\end{proof}

For $t \in \Tc_0$, let $h_t:= g_t^{(s_1)}$ and $\hat{h}_t:=\hat{g}_t^{(s_1)}$. By Lemma~\ref{lem:consequence of Hamiltons Ricci flow}, we have:
\begin{enumerate}\renewcommand{\labelenumi}{(\roman{enumi})}
\item For all $t \in \Tc_0$, the families of metrics $\{ h_t\}_{t \in \Tc_0}$ and $\{ \hat h_t\}_{t \in \Tc_0}$ are smooth.
\end{enumerate}
Kotschwar's backward uniqueness theorem for the Ricci flow~\cite{backward_uniqueness} implies that:
\begin{enumerate}\renewcommand{\labelenumi}{(\roman{enumi})}\setcounter{enumi}{1}
\item For all $t \in \Tc_0$, the isometry groups $\mathsf{Isom}(M,h_t)$ and $\mathsf{Isom}(\hat{M},\hat{h}_t)$ are trivial.
\end{enumerate}
Also, notice that $s \mapsto F_t^* \hat{g}_t^{(s)}$ solves the Ricci equation starting at $g_t$. So by uniqueness of solutions to the Ricci flow:
\begin{enumerate}\renewcommand{\labelenumi}{(\roman{enumi})}\setcounter{enumi}{2}
\item For all $t \in \Tc_0$, the map $F_t$ is an isometry $(M, h_t) \rightarrow (\hat{M}, \hat{h}_t)$.
\end{enumerate}
So to apply Theorem~\ref{thm:continuity implies smoothness in riemannian case}, we only have to show that $F_t$ is continuous in $t$.

\begin{lemma} The map $(p,t) \in M \times \Tc \mapsto F_t(p) \in \hat{M}$ is continuous. \end{lemma}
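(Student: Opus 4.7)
The plan is a standard Arzelà--Ascoli argument combined with the triviality of the isometry group. Fix $t_0 \in \mathcal T_0$; since the question is local in $t$, it suffices to show that if $t_n \to t_0$ then $F_{t_n} \to F_{t_0}$ uniformly on $M$. First I would choose a relatively compact neighborhood $\mathcal T_1$ of $t_0$ in $\mathcal T_0$ and use smooth (hence continuous) dependence of $g_t$ and $\hat g_t$ on $t$, together with compactness of $M \times M \times \overline{\mathcal T_1}$ and $\hat M \times \hat M \times \overline{\mathcal T_1}$, to obtain constants $C_1, C_2 > 0$ such that for all $t \in \overline{\mathcal T_1}$,
\begin{equation*}
C_1^{-1}\dist_{g_{t_0}} \leq \dist_{g_t} \leq C_1 \dist_{g_{t_0}}, \qquad C_2^{-1}\dist_{\hat g_{t_0}} \leq \dist_{\hat g_t} \leq C_2 \dist_{\hat g_{t_0}}.
\end{equation*}
Since each $F_t$ is an isometry for $\dist_{g_t}, \dist_{\hat g_t}$, it follows that the family $\{F_t\}_{t \in \overline{\mathcal T_1}}$ is uniformly Lipschitz with respect to the fixed metrics $\dist_{g_{t_0}}$ and $\dist_{\hat g_{t_0}}$, and similarly for the inverses $\{F_t^{-1}\}_{t \in \overline{\mathcal T_1}}$.

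Next, given any sequence $t_n \to t_0$, compactness of $\hat M$ and equicontinuity from the uniform Lipschitz bound allow me to apply Arzelà--Ascoli to extract a subsequence (still called $t_n$) such that $F_{t_n} \to F$ uniformly and $F_{t_n}^{-1} \to G$ uniformly, for some continuous maps $F : M \to \hat M$ and $G : \hat M \to M$. Using $\dist_{\hat g_{t_n}}(F_{t_n}(p), F_{t_n}(q)) = \dist_{g_{t_n}}(p,q)$ and the uniform convergence of the distance functions on the compact products, I can pass to the limit to conclude that $F$ and $G$ are distance-preserving with respect to $\dist_{g_{t_0}}$ and $\dist_{\hat g_{t_0}}$. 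Passing to the limit in $F_{t_n} \circ F_{t_n}^{-1} = \id$ and $F_{t_n}^{-1} \circ F_{t_n} = \id$ gives $F \circ G = \id_{\hat M}$ and $G \circ F = \id_M$, so $F$ is a bijective distance-preserving map. By the Myers--Steenrod theorem, $F$ is then a smooth Riemannian isometry $(M, g_{t_0}) \to (\hat M, \hat g_{t_0})$.

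Finally, $F_{t_0}^{-1} \circ F$ is an isometry of $(M, g_{t_0})$, so by hypothesis $F_{t_0}^{-1} \circ F = \id_M$, that is $F = F_{t_0}$. Hence every subsequence of $\{F_{t_n}\}$ admits a sub-subsequence converging uniformly to the same limit $F_{t_0}$, so $F_{t_n} \to F_{t_0}$ uniformly. Uniform convergence of $F_{t_n}$ to $F_{t_0}$ for every sequence $t_n \to t_0$ is equivalent to joint continuity of $(p,t) \mapsto F_t(p)$ at each point of $M \times \{t_0\}$. The only mild subtlety is the appeal to Myers--Steenrod to upgrade the distance-preserving limit to a Riemannian isometry; this could alternatively be avoided by noting that a distance-preserving bijection between smooth compact Riemannian manifolds of the same dimension is automatically a smooth isometry.
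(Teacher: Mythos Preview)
Your proof is correct and follows essentially the same approach as the paper's: uniform bi-Lipschitz bounds from the smooth dependence of the metrics, Arzel\`a--Ascoli to extract a limit, Myers--Steenrod to see the limit is a Riemannian isometry, and triviality of $\mathsf{Isom}(M,g_{t_0})$ to identify it with $F_{t_0}$. Your version is slightly more careful in explicitly passing to a limit of the inverses to obtain bijectivity (the paper simply asserts the limit is a homeomorphism), but the arguments are otherwise the same.
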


\begin{proof} By compactness  it suffices to verify that if $(p_n, t_n) \rightarrow (p_\infty,t_\infty)$ and $F_{t_n}(p_n) \rightarrow x$ as $n$ tends to $\infty$, then   $x = F_{t_\infty}(p_\infty)$.

 Since $F_{t_n}$ is an isometry from $(M,g_{t_n})$ to $(\hat M,\hat g_{t_n})$ and  the families of metrics are smooth, there exists $C > 0$ (independent of $n$) such that each $F_{t_n}$ is $C$-bi-Lipschitz with respect to $g_{t_\infty}$ and $\hat g_{t_\infty}$.     Then, using the Arzel\'a-Ascoli theorem, there exists a subsequence   $F_{t_{n_j}}$ converging to a homeomorphism $f : M \rightarrow \hat M$.
Further,
$$
\dist_{\hat{g}_{t_\infty}}(f(p), f(q)) = \lim_{j \rightarrow \infty} \dist_{\hat g_{t_{n_j}}}(F_{t_{n_j}}(p), F_{t_{n_j}}(q)) = \lim_{j \rightarrow \infty} \dist_{g_{t_{n_j}}}(p,q) = \dist_{g_{t_\infty}}(p,q)
$$
for all $p,q \in M$. So by the Myers--Steenrod theorem, $f$ is a smooth diffeomorphism and $f^* \hat{g}_{t_\infty} = g_{t_\infty}$. Thus $f : (M, g_{t_\infty}) \rightarrow (\hat M, \hat g_{t_\infty})$ is an isometry. Since $\mathsf{Isom}(M,g_{t_\infty})$ and $\mathsf{Isom}(\hat{M},\hat{g}_{t_\infty})$ are trivial, this is only possible if $f = F_{t_\infty}$. Thus
$$
x = f(p) = F_{t_\infty}(p)
$$
and the proof is complete.
\end{proof}

Then Theorem~\ref{thm:continuity implies smoothness in riemannian case} implies that the map
$$
(p,t) \in M \times \Tc \longmapsto F_t(p) \in \hat{M}
$$
is smooth.
\end{proof}

\section{Smoothness of families of biholomorphisms}

In this section we use Theorem~\ref{thm:continuity implies smoothness in riemannian case} to prove Theorem~\ref{thm:smoothness of family of biholomorphisms}, which we restate here.

\begin{theorem}\label{thm:smoothness of family of biholomorphisms in paper}
Suppose $\{\Omega_t\}_{t \in \Tc}$ and $\{\hat{\Omega}_t\}_{t \in \Tc}$ are two smooth families of strongly pseudoconvex domains in $\Cb^d$. If for each $t \in \Tc$
\begin{enumerate}\renewcommand{\labelenumi}{$(\alph{enumi})$}
  \item the biholomorphism group $\mathsf{Aut}(\Omega_t)$ is trivial, and
\item there  is a biholomorphism $F_t :\Omega_t \rightarrow \hat{\Omega}_t$,
\end{enumerate}
then the map
$$
(p,t)  \mapsto F_t(p)
$$
is smooth.
\end{theorem}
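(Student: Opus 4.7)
The plan is to reduce the statement to Theorem~\ref{thm:continuity implies smoothness in riemannian case in intro} applied to the Bergman metrics. Since smoothness is a local property in $t$, I fix $t_0 \in \Tc$ and work in a small neighborhood $\Tc_0$ of $t_0$. Using the diffeomorphism $\Phi(\cdot, t) : \Omega_{t_0} \to \Omega_t$ from Definition~\ref{defn: families of str pconvex domains}, I pull back the Bergman metric of $\Omega_t$ to obtain a family of Riemannian metrics $g_t := \Phi(\cdot, t)^* B_{\Omega_t}$ on the \emph{fixed} manifold $M := \Omega_{t_0}$. Doing the analogous thing for $\hat\Omega_t$ with an analogous diffeomorphism $\hat\Phi$, I obtain a family of metrics $\hat g_t$ on $\hat M := \hat\Omega_{t_0}$. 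The maps $\tilde F_t := \hat\Phi(\cdot,t)^{-1} \circ F_t \circ \Phi(\cdot, t) : M \to \hat M$ are isometries between $(M, g_t)$ and $(\hat M, \hat g_t)$, and proving that $(p,t) \mapsto \tilde F_t(p)$ is smooth is equivalent to proving that $(p,t) \mapsto F_t(p)$ is smooth.

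Next I verify the hypotheses of Theorem~\ref{thm:continuity implies smoothness in riemannian case in intro}. The taming condition~\ref{item:taming infinity} is immediate since a smoothly bounded domain admits a deformation retraction onto a relatively compact subdomain by flowing along inward pointing normal lines (as noted in the introduction), so I can take $U$ to be any such subdomain and $\iota_*$ is even an isomorphism. Real analyticity of $g_t$ is inherited from the real analyticity of the Bergman metric on any strongly pseudoconvex domain together with the fact that $\Phi(\cdot, t)$ can be used to transport a real analytic structure from $\Omega_t$ to $M$ (this structure depends on $t$, but that is permitted by the remark following Theorem~\ref{thm:continuity implies smoothness in riemannian case in intro}). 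The Bergman metric is complete on any strongly pseudoconvex domain, so $g_t$ and $\hat g_t$ are complete. For the discreteness of the isometry group, I invoke~\cite[Theorem 1.17]{GreeneKrantz}: every isometry of the Bergman metric of a strongly pseudoconvex domain is holomorphic or antiholomorphic, so when $\mathsf{Aut}(\Omega_t)$ is trivial the isometry group of $(\Omega_t, B_{\Omega_t})$ has at most two elements and in particular is discrete (the same holds after pullback by $\Phi(\cdot, t)$). Finally, $F_t$ is a biholomorphism, hence an isometry of the Bergman metrics, so each $\tilde F_t$ is an isometry of $(M,g_t)$ onto $(\hat M,\hat g_t)$.

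Two ingredients remain. I need smoothness of the families $\{g_t\}$ and $\{\hat g_t\}$, which reduces via the chain rule to smoothness in $t$ of the Bergman metric on smooth families of strongly pseudoconvex domains; this is supplied by the deep result of R. Hamilton on smoothness of the $\bar\partial$-Neumann problem, as assembled in Appendix~\ref{sec:smoothness of Bergman kernels}. I also need continuity of $(p,t) \mapsto \tilde F_t(p)$, which is equivalent to continuity of $(p,t)\mapsto F_t(p)$; this was proved in~\cite{GG2020}, with the caveat that the results there are stated for $\Tc = [0,1]$ but the proofs go through for an arbitrary open set $\Tc$ in a smooth manifold (as already noted in the remark after Theorem~\ref{thm:smoothness of family of biholomorphisms}). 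With all hypotheses verified, Theorem~\ref{thm:continuity implies smoothness in riemannian case in intro} immediately yields that $(p,t) \mapsto \tilde F_t(p)$ is smooth, and hence so is $(p,t)\mapsto F_t(p)$.

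The main obstacle is the verification of smoothness of the Bergman metric in $t$, which is genuinely nontrivial and requires Hamilton's work; in the body of the paper I expect this to be handled by appealing to the appendix rather than being reproven here. Everything else is bookkeeping: rigidifying the family to a fixed manifold via $\Phi$, citing Greene--Krantz to obtain discreteness of the isometry group, and citing~\cite{GG2020} for continuity.
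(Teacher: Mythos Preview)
Your proposal is correct and follows essentially the same approach as the paper: fix $t_0$, pull back the Bergman metrics via $\Phi(\cdot,t)$ and $\hat\Phi(\cdot,t)$ to the fixed domains $\Omega_{t_0}$ and $\hat\Omega_{t_0}$, conjugate $F_t$ to an isometry between these, and verify the hypotheses of Theorem~\ref{thm:continuity implies smoothness in riemannian case in intro} by citing Greene--Krantz for discreteness of the isometry group, Hamilton (Appendix~\ref{sec:smoothness of Bergman kernels}) for smoothness of the Bergman metrics in $t$, and \cite{GG2020} for continuity. The paper organizes exactly these verifications into three short lemmas, matching your outline point by point.
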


The first two authors have previously established that, under the assumptions of Theorem~\ref{thm:smoothness of family of biholomorphisms}, the map
$$
(p,t)  \mapsto F_t(p)
$$
is continuous, see \cite[Proposition 1.3]{GG2020}.

For $t \in \Tc$, let $g_t$ denote the Bergman metric on $\Omega_t$ and let $\hat{g}_t$ denote the Bergman metric on $\hat{\Omega}_t$.  It is elementary that the Bergman metric is real analytic and it is a classical fact that on a strongly pseudoconvex domain the Bergman metric is complete.

Fix $t_0 \in \Tc$. By definition there are a neighborhood $\Tc_0$ of $t_0$, and smooth  maps $\Phi : \mathcal U\times \mathcal T_0 \rightarrow \mathbb C^d$ and $\hat{\Phi} : \hat{\mathcal U}\times\mathcal T_0 \rightarrow \mathbb C^d$ which satisfy Definition~\ref{defn: families of str pconvex domains} for $\{\Omega_t\}_{t \in \Tc}$ and $\{\hat{\Omega}_t\}_{t \in \Tc}$. Then let
$$
h_t : = \left(\Phi(\cdot ,t)|_{\Omega_{t_0}}\right)^* g_t \quad \text{and} \quad \hat{h}_t : = \left(\hat{\Phi}( \cdot,t )|_{\hat{\Omega}_{t_0}}\right)^* \hat{g}_t.
$$
Then $\{h_t\}_{t \in \Tc_0}$ and $\{ \hat{h}_t\}_{t \in \Tc_0}$ are families of complete real analytic Riemannian metrics on $\Omega_{t_0}$ and $\hat\Omega_{t_0}$ respectively (recall that being a real analytic metric means that there is some real analytic structure where the metric is real analytic). Further,
$$
G_t : = \left(\hat{\Phi}( \cdot,t )|_{\hat{\Omega}_{t_0}}\right)^{-1} \circ F_t \circ \left(\Phi( \cdot,t )|_{\Omega_{t_0}}\right)
$$
is an isometry between $(\Omega_{t_0}, h_t)$ and $(\hat \Omega_{t_0}, \hat h_t)$, and
$$
(p,t)  \mapsto G_t(p)
$$
is continuous.

Using Theorem~\ref{thm:continuity implies smoothness in riemannian case}, it then suffices to prove the following lemmas.

\begin{lemma} For all $t \in \Tc_0$, the groups $\mathsf{Isom}(\Omega_{t_0}, h_t) \cong \mathsf{Isom}(\Omega_t, g_t)$ and $\mathsf{Isom}(\hat{\Omega}_{t_0}, \hat h_t) \cong \mathsf{Isom}(\hat{\Omega}_t, \hat{g}_t)$ have order at most two. \end{lemma}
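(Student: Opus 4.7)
The plan is to unpack the lemma into two independent pieces: the isomorphisms $\mathsf{Isom}(\Omega_{t_0},h_t)\cong\mathsf{Isom}(\Omega_t,g_t)$ and $\mathsf{Isom}(\hat\Omega_{t_0},\hat h_t)\cong\mathsf{Isom}(\hat\Omega_t,\hat g_t)$, and the bound on the order of the isometry groups of the Bergman metrics. The isomorphisms are automatic: $\Phi(\cdot,t)\colon\Omega_{t_0}\to\Omega_t$ is a diffeomorphism with $h_t=\Phi(\cdot,t)^*g_t$, so $f\mapsto\Phi(\cdot,t)\circ f\circ\Phi(\cdot,t)^{-1}$ is a group isomorphism from $\mathsf{Isom}(\Omega_{t_0},h_t)$ to $\mathsf{Isom}(\Omega_t,g_t)$, and the same argument with $\hat\Phi(\cdot,t)$ handles the other pair. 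So only the order bound requires real work.

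For the order bound, I would first observe that the hypothesis on $\Omega_t$ transfers: conjugation by the biholomorphism $F_t$ gives $\mathsf{Aut}(\hat\Omega_t)\cong\mathsf{Aut}(\Omega_t)=\{\mathrm{id}\}$, so both automorphism groups are trivial. Next I invoke the Greene--Krantz theorem (\cite[Theorem 1.17]{GreeneKrantz}, already cited in the introduction), which states that every isometry of the Bergman metric on a smoothly bounded strongly pseudoconvex domain is either holomorphic or antiholomorphic. The holomorphic isometries of the Bergman metric coincide with $\mathsf{Aut}(\Omega_t)$ (biholomorphisms preserve the Bergman kernel and hence the Bergman metric, and conversely holomorphic Bergman isometries are biholomorphisms by the very definition of \emph{isometry}), so in our situation the subgroup of holomorphic isometries is trivial.

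It remains to control the antiholomorphic isometries. If $\sigma_1,\sigma_2\in\mathsf{Isom}(\Omega_t,g_t)$ are both antiholomorphic, then $\sigma_1\circ\sigma_2^{-1}$ is a holomorphic isometry, hence lies in $\mathsf{Aut}(\Omega_t)=\{\mathrm{id}\}$, forcing $\sigma_1=\sigma_2$. Thus there is at most one antiholomorphic isometry, and combined with the unique holomorphic isometry (the identity) one gets $\lvert\mathsf{Isom}(\Omega_t,g_t)\rvert\leq 2$. The identical argument applies to $(\hat\Omega_t,\hat g_t)$.

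There is essentially no obstacle here once the Greene--Krantz dichotomy is quoted; the only thing worth flagging carefully is the pullback identification at the start, which ensures the lemma as stated really is equivalent to a statement about the Bergman isometry groups of the actual domains $\Omega_t$ and $\hat\Omega_t$ themselves, so that Greene--Krantz applies directly.
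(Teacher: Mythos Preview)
Your proof is correct and follows essentially the same approach as the paper: invoke Greene--Krantz to see that $\mathsf{Aut}(\Omega_t)$ has index at most two in $\mathsf{Isom}(\Omega_t,g_t)$, and then use triviality of $\mathsf{Aut}(\Omega_t)$ (and its transfer to $\hat\Omega_t$ via $F_t$) to conclude. Your version is simply more explicit about the pullback isomorphisms and the ``at most one antiholomorphic isometry'' step, which the paper compresses into the index-two remark.
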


\begin{proof} By~\cite[Theorem 1.17]{GreeneKrantz} every isometry  for the Bergman metric on a strongly pseudoconvex domain  is either holomorphic or antiholomorphic. So the subgroups
$$
\mathsf{Aut}(\Omega_t) \leq \mathsf{Isom}(\Omega_t, g_t), \quad \mathsf{Aut}(\hat{\Omega}_t) \leq \mathsf{Isom}(\hat{\Omega}_t, \hat{g}_t)
$$
have index at most two. Since $\mathsf{Aut}(\Omega_t)$, $\mathsf{Aut}(\hat{\Omega}_t)$ are trivial, the lemma follows.
\end{proof}

\begin{lemma} The families of metrics $\{h_t\}$ and $\{\hat{h}_t\}$ are smooth. \end{lemma}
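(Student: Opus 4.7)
The plan is to reduce smoothness of $\{h_t\}$ (and, symmetrically, $\{\hat h_t\}$) to smoothness of the Bergman kernels of $\{\Omega_t\}_{t\in\Tc_0}$ as a function of the joint variable $(z,t)$. Recall that on any bounded domain $D \subset \Cb^d$ the Bergman metric is given in coordinates by
\begin{equation*}
g_D(z) = \sum_{i,j} \frac{\partial^2 \log K_D(z,\bar z)}{\partial z_i \, \partial \bar z_j} \, dz_i \otimes d\bar z_j,
\end{equation*}
where $K_D$ denotes the Bergman kernel. Hence once we know that $(z,t) \mapsto K_{\Omega_t}(z,\bar z)$ is smooth on the total space $\bigcup_{t \in \Tc_0} \Omega_t \times \{t\}$, the metric coefficients of $g_t$ depend smoothly on $(z,t)$ by differentiation under the smooth dependence.

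The required smoothness of the Bergman kernel is precisely the content of the appendix (Appendix~\ref{sec:smoothness of Bergman kernels}), which adapts results of Hamilton on families of $\bar\partial$-problems over smooth families of strongly pseudoconvex domains with boundary. Granted that appendix, the first step is to apply it to both families $\{\Omega_t\}_{t \in \Tc_0}$ and $\{\hat\Omega_t\}_{t \in \Tc_0}$ to conclude that $(z,t) \mapsto g_t(z)$ and $(w,t) \mapsto \hat g_t(w)$ are smooth families of Hermitian metrics on the varying domains.

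The second step is to transfer these metrics to the fixed domain $\Omega_{t_0}$ (respectively $\hat\Omega_{t_0}$) via the pullback formula
\begin{equation*}
h_t(p) = \bigl(\Phi(\cdot,t)|_{\Omega_{t_0}}\bigr)^* g_t\bigl(\Phi(p,t)\bigr).
\end{equation*}
Since $\Phi : \mathcal U \times \Tc_0 \to \Cb^d$ is $C^\infty$-smooth and $\Phi(\cdot,t)$ is a diffeomorphism onto $\Omega_t$, the Jacobian matrices $d\Phi(\cdot,t)_p$ depend smoothly on $(p,t)$ and $\Phi(p,t)$ is smooth in $(p,t)$. Composing the smooth assignment $(p,t) \mapsto \Phi(p,t)$ with the smooth assignment $(z,t) \mapsto g_t(z)$, and then conjugating by the smooth Jacobian, yields a smooth dependence of $h_t(p)$ on $(p,t)$. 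The identical argument, with $\hat\Phi$ and $\hat g_t$ in place of $\Phi$ and $g_t$, handles $\{\hat h_t\}$.

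The only non-routine input here is the smoothness of the Bergman kernel in families, which is the main obstacle and is precisely where Hamilton's Nash--Moser-based techniques enter; once that is in hand, the pullback step is a purely formal consequence of the chain rule and the smoothness of $\Phi$ and $\hat\Phi$.
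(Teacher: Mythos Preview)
Your proposal is correct and follows the same approach as the paper: invoke Hamilton's result (via the appendix) to obtain smoothness of the Bergman metrics $\{g_t\}$ and $\{\hat g_t\}$ on the varying domains, and then observe that pulling back by the smooth diffeomorphisms $\Phi(\cdot,t)$ and $\hat\Phi(\cdot,t)$ preserves smoothness. The paper's own proof is two sentences citing the appendix and concluding with ``Hence $\{h_t\}$ and $\{\hat h_t\}$ are also smooth''; you have simply unpacked the same argument with more detail about the Bergman metric formula and the chain rule.
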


\begin{proof} It is a well known consequence of deep work of Hamilton, see Appendix~\ref{sec:smoothness of Bergman kernels}  for details, that the family of metrics $\{g_t\}$ and $\{\hat{g}_t\}$ are smooth. Hence $\{h_t\}$ and $\{\hat{h}_t\}$ are also smooth.
\end{proof}

\begin{lemma} There is a relatively compact connected open set $U \subset \Omega_{t_0}$ such that the inclusion map $U \hookrightarrow \Omega_{t_0}$ induces a surjection $\pi_1(U,u_0) \rightarrow \pi_1(\Omega_{t_0},u_0)$ for any $u_0 \in U$. \end{lemma}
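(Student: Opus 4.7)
The plan is to exhibit $U$ as an interior sublevel set of a smooth defining function for $\Omega_{t_0}$ and to show that the inclusion $\iota : U \hookrightarrow \Omega_{t_0}$ is in fact a homotopy equivalence, which is strictly stronger than the surjectivity of $\iota_*$ required here. This mirrors the remark made in the introduction.

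First, since $\Omega_{t_0}$ is a smoothly bounded domain in $\Cb^d$, I would fix a smooth defining function $\rho : \Cb^d \rightarrow \Rb$ with $\Omega_{t_0} = \{\rho < 0\}$ and $d\rho \neq 0$ on $\partial\Omega_{t_0}$. Compactness of $\partial\Omega_{t_0}$ yields some $\epsilon_0 > 0$ and a constant $c > 0$ with $\norm{\nabla \rho} \geq c$ on the closed collar $\{-\epsilon_0 \leq \rho \leq 0\}$, so the vector field $X := -\nabla \rho / \norm{\nabla \rho}^2$ has a well-defined flow $\varphi_s$ on this collar along which $\rho(\varphi_s(p)) = \rho(p) - s$. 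Fixing any $\epsilon \in (0, \epsilon_0)$, I set $U := \{\rho < -\epsilon\}$; this set is open, relatively compact in $\Omega_{t_0}$, and connected because $\Omega_{t_0}$ itself is connected and the time-$\epsilon$ flow of $X$ restricts to a diffeomorphism $\Omega_{t_0} \rightarrow U$ through the collar.

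Next I would construct a smooth deformation retraction $H : \Omega_{t_0} \times [0,1] \rightarrow \Omega_{t_0}$ of $\Omega_{t_0}$ onto $U$ by extending the inward flow by the identity deep inside $\Omega_{t_0}$. Fix a smooth cutoff $\chi : \Rb \rightarrow [0,1]$ with $\chi(s) = 0$ for $s \leq -\epsilon$ and $\chi(s) = 1$ for $s \geq -\epsilon/2$, and define $H(p,\tau)$ to be the time $\tau \, \chi(\rho(p))\, (\rho(p) + \epsilon)$ flow of $X$, interpreted as the identity wherever $\chi(\rho(p)) = 0$. Then $H(\cdot, 0) = \id_{\Omega_{t_0}}$, $H(\Omega_{t_0}, 1) \subset U$, and $H(\cdot, \tau)|_{U} = \id_U$ for every $\tau$, so $U$ is a (strong) deformation retract of $\Omega_{t_0}$. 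Consequently $\iota_* : \pi_1(U, u_0) \rightarrow \pi_1(\Omega_{t_0}, u_0)$ is an isomorphism for any $u_0 \in U$.

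There is no serious obstacle here: the entire construction takes place in the collar $\{-\epsilon_0 < \rho < 0\}$, where the nondegeneracy of $d\rho$ on $\partial\Omega_{t_0}$ guarantees that $X$ is smooth, nonsingular, and has flow lines that exit $\{-\epsilon < \rho < 0\}$ into $U$ in bounded time. The only small bookkeeping point is the cutoff in the definition of $H$, which ensures that the formula defines a globally smooth homotopy on $\Omega_{t_0}$ rather than merely on the collar.
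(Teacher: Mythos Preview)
Your approach coincides with the paper's: the paper simply writes ``using a normal neighborhood of $\partial\Omega_{t_0}$, we see that $\Omega_{t_0}$ deformation retracts onto $U$'' and stops, so you have supplied the inward gradient flow that underlies that sentence.

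The explicit homotopy you wrote down has a slip, however. At $\tau=1$ a point $p$ with $\rho(p)\in(-\epsilon,-\epsilon/2)$ has $0<\chi(\rho(p))<1$, so its flow time $\chi(\rho(p))(\rho(p)+\epsilon)$ is strictly less than $\rho(p)+\epsilon$ and hence $\rho(H(p,1))>-\epsilon$; points with $\rho(p)\ge-\epsilon/2$ land exactly on $\{\rho=-\epsilon\}$. Thus $H(\Omega_{t_0},1)\not\subset U$. In fact no continuous retraction of $\Omega_{t_0}$ onto the \emph{open} set $U$ can fix $U$ pointwise: continuity of $H(\cdot,1)$ at any $p\in\partial U$ would force $H(p,1)=\lim_{u\to p,\,u\in U}u=p\notin U$. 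So ``strong deformation retract onto $U$'' is too strong a claim. A clean repair is to cut off the vector field rather than the flow time: set $Y=\eta(\rho)\,X$ with $\eta\equiv1$ on $[-\epsilon_0/2,\infty)$ and $\eta\equiv0$ on $(-\infty,-3\epsilon_0/4]$, so $Y$ extends smoothly by zero to all of $\Omega_{t_0}$, and let $\phi_\tau$ be its flow. Then $\rho$ is nonincreasing along $\phi_\tau$, so $\phi_\tau(U)\subset U$ for all $\tau\ge0$, while $\phi_{\epsilon_0/2}(\Omega_{t_0})\subset\{\rho\le-\epsilon_0/2\}\subset U$ (take $\epsilon<\epsilon_0/2$). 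The map $\phi_{\epsilon_0/2}:\Omega_{t_0}\to U$ is then a two-sided homotopy inverse to $\iota$, and the isomorphism on $\pi_1$ follows. (Connectedness of $U$ also falls out of this homotopy equivalence, which is cleaner than invoking a ``time-$\epsilon$ flow of $X$'' on all of $\Omega_{t_0}$ as in your first paragraph---$X$ is only defined on the collar.)
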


\begin{proof}  Fix $\epsilon > 0$ sufficiently small and let
$$
U : = \{ z \in \Omega_{t_0} : {\rm dist}(z, \partial\Omega_{t_0}) > \epsilon\}.
$$
Using a normal neighborhood of $\partial\Omega_{t_0}$, we see that $\Omega_{t_0}$ deformation retracts onto $U$ and hence for any $u_0 \in U$ the inclusion map $U \hookrightarrow \Omega_{t_0}$ induces a surjection $\pi_1(U,u_0) \rightarrow \pi_1(\Omega_{t_0},u_0)$.
\end{proof}

\section{An example and questions}\label{sec: examples and conjecture}

In this section, we provide a construction which shows that the answer to Question~\ref{main question} is often no for non-rigid domains. Then we list some related open questions.

 \begin{proposition}\label{Omega 1/k}
Let $D$ be a strongly pseudoconvex domain in $\mathbb C^d$. If $\Aut(D) \neq \{\id_D\}$, then there are two smooth families  $\{\Omega_t\}_{t \in (-1,1)}$, $\{\hat \Omega_t\}_{t \in (-1,1)}$ of strongly pseudoconvex
  domains where $\Omega_0 = D = \hat \Omega_0$, $\Omega_t$ is biholomorphic to $\hat \Omega_t$ for all $t \in (-1,1)$,  and any family of biholomorphisms $\{ F_t : \Omega_t \rightarrow \hat \Omega_t\}_{t \in (-1,1)}$ is discontinuous at $t=0$.
\end{proposition}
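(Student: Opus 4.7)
The plan is to exploit a non-identity $\psi \in \Aut(D)$ to construct $\Omega_t$ and $\hat\Omega_t$ as two different smooth families of small inward bumpings of $D$, rigged so that for $t>0$ the identity is the only biholomorphism $\Omega_t \to \hat\Omega_t$, while for $t<0$ the automorphism $\psi$ is the only one. Smoothness of both families across $t=0$ will be arranged by using bump amplitudes that are flat at $t=0$, while the forced jump from $\id$ to $\psi$ at the origin delivers the discontinuity. First I fix $\psi \in \Aut(D) \setminus \{\id_D\}$; by Fefferman's extension theorem $\psi$ extends to a smooth diffeomorphism of $\overline D$, so there is a boundary point $q \in \partial D$ with $\psi(q) \neq q$. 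Let $\rho \in C^\infty(\Cb^d)$ be a defining function for $D$.

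The key technical step is to produce a non-negative $\chi \in C^\infty_c(\Cb^d)$, supported in a small neighborhood of $q$ disjoint from a neighborhood of $\psi(q)$, such that for all sufficiently small $s>0$ the smoothly bounded strongly pseudoconvex domain $\{\rho+s\chi<0\}$ has \emph{trivial} automorphism group. This is a genericity assertion: by upper semicontinuity of the automorphism group under smooth deformations of strongly pseudoconvex domains (via normal families, in the spirit of Greene--Krantz), any automorphism of $\{\rho+s\chi<0\}$ for small $s>0$ is close to an element of $\Aut(D)$; any $g \in \Aut(D)$ with $g(q) \neq q$ is ruled out because it displaces the support of $\chi$, and elements of $\mathrm{Stab}_{\Aut(D)}(q)$ can be killed by choosing $\chi$ sufficiently asymmetric near $q$. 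When $\Aut(D)$ has positive dimension (as for the ball) one takes $\chi$ as a sum of small subbumps at finitely many boundary points whose configuration has trivial $\Aut(D)$-stabilizer, which exists because $\Aut(D)$ acts effectively. I expect this bump-rigidity statement to be the main technical obstacle, though such genericity is standard in the strongly pseudoconvex literature.

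Next, transport $\chi$ via $\psi$: writing $\rho \circ \psi^{-1} = u \rho$ for a smooth positive function $u$ on $\overline D$, define $\tilde\chi := u^{-1}(\chi \circ \psi^{-1})$ on $\overline D$ and extend $\tilde\chi$ smoothly to $\Cb^d$ as a non-negative function supported near $\psi(q)$. A direct computation then gives $\psi(\{\rho+s\chi<0\}) = \{\rho+s\tilde\chi<0\}$ for every small $s>0$, so $\psi$ restricts to a biholomorphism between these domains. Pick a small $\varepsilon > 0$ and define smooth flat functions $\phi_+(t) := \varepsilon e^{-1/t}$ for $t>0$, $\phi_+(t) := 0$ for $t \le 0$, and $\phi_-(t) := \phi_+(-t)$, and set
\[
\Omega_t := \{\rho + (\phi_+(t)+\phi_-(t))\chi < 0\}, \qquad \hat\Omega_t := \{\rho + \phi_+(t)\chi + \phi_-(t)\tilde\chi < 0\},
\]
for $t \in (-1,1)$. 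Both are smooth families of strongly pseudoconvex domains with $\Omega_0 = \hat\Omega_0 = D$ (for $\varepsilon$ small enough that the perturbed defining functions remain strongly pseudoconvex throughout the interval). For $t>0$ one has $\phi_-(t) = 0$, so $\Omega_t = \hat\Omega_t$, and triviality of $\Aut(\Omega_t)$ forces the unique biholomorphism to be $F_t = \id$; for $t<0$ one has $\phi_+(t)=0$ and hence $\hat\Omega_t = \psi(\Omega_t)$, so again by triviality $F_t = \psi$.

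It follows that for any family of biholomorphisms $\{F_t\}_{t \in (-1,1)}$ one has $\lim_{t \to 0^+} F_t = \id$ and $\lim_{t \to 0^-} F_t = \psi$ locally uniformly on $D$; since $\psi \neq \id$, no choice of $F_0 \in \Aut(D)$ can make $(p,t) \mapsto F_t(p)$ continuous at $t=0$. The use of functions flat at $0$ is essential here: it allows the defining functions of the two families to agree to all orders at $t=0$ even though the unique biholomorphism is forced to jump between $\id$ and $\psi$ as $t$ crosses the origin.
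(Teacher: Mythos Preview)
Your overall architecture---use a non-trivial $\psi\in\Aut(D)$, build two families of inward bumpings that agree to infinite order at $t=0$ via flat cut-offs, and force the biholomorphism to jump from $\id$ to $\psi$---is exactly the strategy of the paper. The difference lies entirely in how rigidity of the perturbed domains is obtained, and there your argument has a genuine gap.

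You assert that one can choose a single bump $\chi$ (or a finite sum of bumps) so that the domain $\{\rho+s\chi<0\}$ has trivial automorphism group for \emph{every} sufficiently small $s>0$. This is much stronger than what is available in the literature. The Burns--Shnider--Wells theorem gives that rigid domains are \emph{dense} among strongly pseudoconvex domains in any $C^k$ topology; it does not give rigidity along a whole one-parameter family accumulating at the non-rigid domain $D$. Your sketched justification via upper semicontinuity only tells you that automorphisms of $\{\rho+s\chi<0\}$ are close to elements of $\Aut(D)$ as $s\to 0$; it does not by itself rule out, for each small $s$, a nearby automorphism of the perturbed domain (which is \emph{not} $D$, so ``$g$ displaces the support of $\chi$'' is not directly a contradiction). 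Making this rigorous would require a careful argument with boundary CR invariants distinguishing the bumped region from the rest of $\partial\Omega_s$ uniformly in $s$, and in the non-compact case (the ball) the situation is worse since upper semicontinuity does not give subconvergence to an element of $\Aut(D)$ without further control. You flag this yourself as ``the main technical obstacle'' but then treat it as standard; it is not.

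The paper sidesteps this difficulty entirely: rather than demanding rigidity on a whole interval of perturbation sizes, it invokes Burns--Shnider--Wells only to obtain rigid domains $D_{1/k}$ at a discrete sequence of parameter values $1/k$, with $|D_{1/k}-D|_{C^k}$ decaying rapidly. It then builds a smooth family $\{D_t\}_{t\in[0,1]}$ interpolating these via disjointly supported cut-offs $\chi_k(t)$, and finally reparametrizes by $t\mapsto e^{1-1/|t|}$ to make everything flat at $t=0$. Rigidity is then only used at the countable set $t=\pm 1/(1+\log k)$, which already forces $F_{1/(1+\log k)}=\id$ and $F_{-1/(1+\log k)}=f$, hence discontinuity at $t=0$. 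This is what you should replace your ``bump-rigidity for all small $s$'' step with.
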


We sketch the proof of Proposition~\ref{Omega 1/k} and then provide a rigorous argument at the end of the section.

\begin{proof}[Proof Sketch] Burns--Shnider--Wells proved, in a very precise sense, that rigid domains are dense in the space of all domains. Using their result, we will construct a smooth family of domains $\{D_t\}_{t \in [0,1]}$ where
\begin{enumerate}\renewcommand{\labelenumi}{(\roman{enumi})}
\item $D_0  = D$,
\item $D_{1/k}$ is rigid when $k =1,2,\dots$, and
\item $D_t \subset D$ for all $t \in [0,1]$.
\end{enumerate}
We then fix $f \in \Aut(D) \setminus \{\id_D\}$ and define two families of domains
$$
\Omega_t : = \begin{cases} D_{e^{1-1/\abs{t}}} & \text{ if } t \neq 0 \\ D & \text{ if } t = 0 \end{cases}
$$
and
$$
\hat{\Omega}_t : = \begin{cases} D_{e^{1-1/\abs{t}}} & \text{ if } t > 0 \\ D & \text{ if } t = 0 \\ f(D_{e^{1-1/\abs{t}}}) & \text{ if } t < 0\end{cases}.
$$
Since $t \mapsto e^{-1/\abs{t}}$ vanishes to infinite order at $t=0$, both of the above families are smooth. Further, for each $t \in (-1,1)$ the domains $\Omega_t$ and $\hat \Omega_t$ are biholomorphic. However, since $D_{1/k}$ is rigid, if $\{F_t : \Omega_t \rightarrow \hat\Omega_t\}$ is a family of biholomorphisms, then $F_{1/(1+\log(k))} = \id$ and $F_{-1/(1+\log(k))} = f$ when $k=1,2,\dots$. So any such family is discontinuous at $t=0$.
\end{proof}

In the special case where $\Omega_0$ is the unit ball, Proposition~\ref{Omega 1/k} was proved in~\cite{GG2020}.
In all of these  examples,
 the automorphism group of the domains does not vary continuously in $t$, which leads to the following natural question.

\begin{question}
Suppose $\{\Omega_t\}_{t\in\mathcal T}$ and $\{\hat\Omega_t\}_{t\in\mathcal T}$ are two smooth families of strongly pseudoconvex domains in $\mathbb C^d$ where for each $t\in\mathcal T$ the domains $\Omega_t$ and $\hat \Omega_t$ are biholomorphic. If the groups $\mathsf{Aut}(\Omega_t)$ are all isomorphic, is there a smooth family  $\{ F_t : \Omega_t \rightarrow \hat \Omega_t\}_{t \in \Tc}$ of biholomorphism?
\end{question}

Theorem~\ref{thm:smoothness of family of biholomorphisms} provides a positive answer to the above question when the automorphism groups are trivial. In the case when each $\mathsf{Aut}(\Omega_t)$  is non-compact, the Wong and Rosay ball theorem~\cite{Wong-77,Rosay-79} implies that $\Omega_t$ is biholomorphic to the unit ball and in this case the above question also has a positive local answer.

\begin{proposition}\label{prop:ball example} Suppose $\{\Omega_t\}_{t\in\mathcal T}$ and $\{\hat\Omega_t\}_{t\in\mathcal T}$ are two smooth families of strongly pseudoconvex domains in $\mathbb C^d$ where for each $t\in\mathcal T$ the domains $\Omega_t$ and $\hat \Omega_t$ are biholomorphic. If each $\Aut(\Omega_t)$ is non-compact (i.e. $\Omega_t$ is biholomorphic to the unit ball), then each $t_0 \in \Tc$ has an open neighborhood $\Tc_0 \subset \Tc$ where there exists a smooth family  $\{ F_t : \Omega_t \rightarrow \hat \Omega_t\}_{t \in \Tc_0}$ of biholomorphisms.
\end{proposition}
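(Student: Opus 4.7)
The plan is to exploit that each $\Omega_t$ is biholomorphic to $\Bb^d$, which is a Hermitian symmetric space of noncompact type. This has two crucial consequences: (i) $\Aut(\Bb^d)$ acts simply transitively on the bundle of unitary Bergman frames, so a biholomorphism is pinned down by a choice of basepoint and frame; and (ii) the Bergman metric on the ball is Cartan--Hadamard, so the exponential map is a global diffeomorphism and the uniquely-determined biholomorphism admits an explicit geodesic formula. These two facts together will produce a smooth family.

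Fix $t_0 \in \Tc$ and shrink $\Tc$ to a neighborhood $\Tc_0$ on which the diffeomorphisms $\Phi(\cdot,t)\colon \Omega_{t_0}\to \Omega_t$ and $\hat\Phi(\cdot,t)\colon \hat\Omega_{t_0}\to \hat\Omega_t$ of Definition~\ref{defn: families of str pconvex domains} are defined. Pulling back the Bergman metrics and complex structures produces smooth families $\{(h_t,J_t)\}_{t\in\Tc_0}$ on the fixed real manifold $\Omega_{t_0}$ and $\{(\hat h_t,\hat J_t)\}_{t\in\Tc_0}$ on $\hat\Omega_{t_0}$. By hypothesis each K\"ahler manifold $(\Omega_{t_0},h_t,J_t)$ is biholomorphically isometric to the Bergman ball, and is therefore Cartan--Hadamard; in particular $\exp^{h_t}_p\colon T_p\Omega_{t_0}\to\Omega_{t_0}$ is a global diffeomorphism for every $p$ and $t$, and the same holds on $\hat\Omega_{t_0}$.

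Fix basepoints $p\in\Omega_{t_0}$ and $\hat p\in\hat\Omega_{t_0}$. Since $(h_t,J_t)$ varies smoothly in $t$, the bundle of $J_t$-complex $h_t$-orthonormal frames at $p$ is a smooth principal $U(d)$-bundle over $\Tc_0$ and admits a smooth local section $t\mapsto\beta_t$; pick such a section, and similarly pick $t\mapsto \hat\beta_t$ at $\hat p$. Let $L_t\colon T_p\Omega_{t_0}\to T_{\hat p}\hat\Omega_{t_0}$ be the unique $\Cb$-linear isometry sending $\beta_t$ to $\hat\beta_t$, and define
\[
\tilde F_t \;:=\; \exp^{\hat h_t}_{\hat p}\,\circ\, L_t\,\circ\, \bigl(\exp^{h_t}_p\bigr)^{-1}\colon \Omega_{t_0}\to\hat\Omega_{t_0}.
\]
The main step is to verify that $\tilde F_t$ is a biholomorphism with respect to $J_t$ and $\hat J_t$ and not merely a diffeomorphism. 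Here I would argue: because $\Aut(\Bb^d)$ acts simply transitively on unitary Bergman frames, the set of biholomorphisms $(\Omega_{t_0},J_t)\to(\hat\Omega_{t_0},\hat J_t)$ contains a unique element $F$ with $F(p)=\hat p$ and $dF|_p=L_t$; since biholomorphisms between strongly pseudoconvex domains preserve Bergman metrics, $F$ is a Riemannian isometry and thus satisfies the naturality identity $F\circ \exp^{h_t}_p=\exp^{\hat h_t}_{\hat p}\circ dF|_p$, forcing $F=\tilde F_t$. Setting $F_t:=\hat\Phi(\cdot,t)\circ\tilde F_t\circ\Phi(\cdot,t)^{-1}$ then gives the desired biholomorphism $\Omega_t\to\hat\Omega_t$.

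Smoothness is then immediate from Observation~\ref{obs:smoothness of exponential maps}: the maps $(v,t)\mapsto\exp^{h_t}_p(v)$ and $(w,t)\mapsto\exp^{\hat h_t}_{\hat p}(w)$ are smooth, and smoothness of the inverse of the former in the $v$ variable follows from the inverse function theorem applied to $(v,t)\mapsto(\exp^{h_t}_p(v),t)$; together with the smoothness of $L_t$ in $t$ this yields smoothness of $(q,t)\mapsto\tilde F_t(q)$, hence of $(p,t)\mapsto F_t(p)$ after composing with $\Phi$ and $\hat\Phi$. The principal obstacle is the biholomorphicity check in the previous paragraph, which genuinely uses that the model is the ball: without the simply transitive action of $\Aut(\Bb^d)$ on unitary Bergman frames, one cannot pin down a biholomorphism by a $1$-jet, and the explicit exponential-map formula loses its meaning.
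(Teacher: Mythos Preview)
Your argument is correct and is essentially the same as the paper's: both pick smoothly varying unitary Bergman frames at fixed basepoints, invoke the simply transitive action of $\Aut(\Bb^d)$ on such frames to pin down a unique biholomorphism with prescribed $1$-jet, and then recover that biholomorphism via the global exponential-map formula (using $\injrad=\infty$ for the ball's Bergman metric) to read off smoothness from Observation~\ref{obs:smoothness of exponential maps}. The only cosmetic difference is that you first pull everything back to the fixed domain $\Omega_{t_0}$ via $\Phi,\hat\Phi$, whereas the paper works directly on the moving domains with a common basepoint $p_0\in\bigcap_{t\in\Tc_0}\Omega_t$.
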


\begin{proof} See Subsection~\ref{sec:proof of ball example} below.
\end{proof}

\begin{remark}
When $\{\Omega_t\}_{t\in \Tc}$ is a holomorphic family of {\it compact} complex manifolds where each $\Omega_t$ is biholomorphic to $\Omega_{t_0}$,  Fischer and Grauert~\cite{local-trivial} showed that locally in $t$ there is a holomorphic family of biholomorphisms sending $\Omega_t$ onto $\Omega_{t_0}$.
\end{remark}

   It would be interesting to  identify
    topological conditions on $\Tc$ which imply the existence of a global smooth family of biholomorphisms in the setting of Proposition~\ref{prop:ball example}.

Question~\ref{main question} is stated for smooth families of   complex manifolds,
 but one could also consider the analogous question with other regularity assumptions and conclusions. For instance,  is the real analytic version of Theorem~\ref{thm:smoothness of family of biholomorphisms} true?

\begin{question}
Suppose $\{\Omega_t\}_{t\in\mathcal T}$ and $\{\hat\Omega_t\}_{t\in\mathcal T}$ are two real analytic families of real analytic strongly pseudoconvex domains in $\mathbb C^d$ where for each $t\in\mathcal T$ the domains $\Omega_t$ and $\hat \Omega_t$ are biholomorphic. If the groups $\mathsf{Aut}(\Omega_t)$ are all trivial, is the unique family  $\{ F_t : \Omega_t \rightarrow \hat \Omega_t\}_{t \in \Tc}$ of biholomorphisms real analytic in $t$?
\end{question}

In the context of the above question, it is worth noting that the methods used in the proof of Proposition~\ref{Omega 1/k} cannot be used to produce a real analytic family of domains.

It also seems natural to wonder if a positive answer to Question~\ref{main question} in the continuous category implies a positive answer in the smooth category.

\begin{question}
Suppose $\{\Omega_t\}_{t\in\mathcal T}$ and $\{\hat\Omega_t\}_{t\in\mathcal T}$ are two smooth families of strongly pseudoconvex domains in $\mathbb C^d$ where for each $t\in\mathcal T$ the domains $\Omega_t$ and $\hat \Omega_t$ are biholomorphic. If there is a continuous family of
 biholomorphisms  $\{ F_t : \Omega_t \rightarrow \hat \Omega_t\}_{t \in \Tc}$, then is there a smooth family of biholomorphisms?
\end{question}

\subsection{Proof of Proposition~\ref{Omega 1/k}} Suppose $D$ is a strongly pseudoconvex domain in $\mathbb C^d$ with $\Aut(D) \neq \{\id_D\}$. Fix a strongly plurisubharmonic defining function $\rho_0 : V \rightarrow \Rb$ of $\partial D$, where $V$ is a neighborhood of $\partial D$, $\nabla \rho_0 \neq 0$ on $\partial D$,   $D \cap V =\{\rho_0 < 0\}$  and $\partial D = \{ \rho_0 = 0\}$. Then fix $\delta > 0$ sufficiently small so that if
$$
U : = \{ -\delta < \rho_0 < \delta\},
$$
then $\nabla \rho_0 \neq 0$ on $\overline{U}$ and $\overline{U}$ is a smooth manifold with boundary inside of $V$.

Let $\Pc(\overline{U})$ denote the open set in $C^\infty(\overline{U})$ of strongly plurisubharmonic functions $\psi : \overline{U} \rightarrow \Rb$ where $\nabla \psi \neq 0$ on $\overline{U}$. Fix $\epsilon_0 > 0$ sufficiently small so that: if $\psi \in \Pc(\overline{U})$ and $\abs{\rho_0 - \psi}_{U,2} < \epsilon_0$, then  the set
$$
D_\psi : = \{ z \in U : \psi(z) < 0\} \cup (D \setminus U)
$$
is a strongly pseudoconvex domain. Then let
$$
\Nc : = \left\{ \psi \in \Pc(\overline{U}) : \abs{\rho_0 - \psi}_{U,2} < \epsilon_0 \right\}.
$$

We will use the following density theorem of Burns--Shnider--Wells.

\begin{theorem}[{\cite[Theorem 4.1]{BSW}}] For any $\epsilon > 0$, $k \in \mathbb{N}$, and $\psi \in \Nc$ there exists $\tilde{\psi} \in \Nc$ such that
$$
\abs{\tilde \psi - \psi}_{U,k} < \epsilon
$$
and $\mathsf{Aut}(D_{\tilde{\psi}})$ is trivial.
\end{theorem}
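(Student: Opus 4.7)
My plan is to combine Fefferman's boundary regularity theorem with Chern--Moser theory of CR invariants to turn the statement into a transversality problem for finite jets of the defining function. By Fefferman's theorem, every element of $\mathsf{Aut}(D_\psi)$ extends to a smooth CR diffeomorphism of $\partial D_\psi$, so it suffices to produce $\tilde{\psi}\in\Nc$ arbitrarily $C^k$-close to $\psi$ such that $\partial D_{\tilde{\psi}}$ admits no nontrivial CR self-diffeomorphism.

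The key rigidity input is Chern--Moser theory: a CR automorphism of a strongly pseudoconvex real hypersurface in $\mathbb{C}^d$ is uniquely determined by its $2$-jet at any one point, and the group of all CR automorphisms is a Lie group of dimension at most $d^2+2d$, with equality only in the locally spherical case. Moreover, if a CR automorphism sends $p$ to $p'$, then a canonical sequence of Chern--Moser normal-form invariants at $p$ and $p'$ must coincide; each such invariant is a polynomial in some finite-order jet of the defining function, valued in a fixed finite-dimensional vector space. Thus it is enough, after perturbation, to arrange that the map $p\mapsto(\text{Chern--Moser invariants at }p)$ is injective on $\partial D_{\tilde{\psi}}$ and avoids the locally spherical stratum; for such $\tilde\psi$ every CR automorphism must be the identity.

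I would realize the perturbation as a finite-dimensional family of bump-function modifications of $\psi$ supported strictly inside $U$, parametrized by coefficients $c\in\mathbb{R}^n$ with $\abs{c}$ small (so that $\tilde{\psi}:=\psi+\sum c_\alpha \phi_\alpha$ remains in $\Nc$ and $\abs{\tilde{\psi}-\psi}_{U,k}<\epsilon$). In coordinates adapted to the Chern--Moser normal form at a chosen point $p_0$ one checks that $c\mapsto(\text{invariants at }p)$ is a submersion at $c=0$ for each fixed $p$. Applying Sard's theorem to the separation map $(c,p,p')\mapsto\text{invariant}(c,p)-\text{invariant}(c,p')$ on $\partial D_{\tilde\psi}\times\partial D_{\tilde\psi}\setminus\Delta$, and to the map measuring distance to the spherical stratum, a generic $c$ arbitrarily close to $0$ yields the required $\tilde\psi$.

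The main technical obstacle is the surjectivity in the transversality step: one must verify that finite-dimensional families of perturbations of $\psi$ actually move the Chern--Moser invariants in all relevant directions of the invariant target space. This is checked by an explicit analysis of the Chern--Moser normalization algorithm, which iteratively extracts the invariants from the Taylor coefficients of the defining function and allows one to compute, degree by degree, the Jacobian of the invariants with respect to those coefficients; surjectivity in sufficiently high jet spaces is the quantitative content underlying the Burns--Shnider--Wells argument. Compactness of $\partial D_\psi$ then upgrades the pointwise genericity to the global statement, and smallness of $\abs{c}$ preserves strong plurisubharmonicity of $\tilde\psi$ and the nonvanishing of $\nabla\tilde\psi$ on $\overline{U}$, so that $\tilde\psi\in\Nc$ as required.
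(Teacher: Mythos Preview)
The paper does not prove this statement at all: it is quoted verbatim as \cite[Theorem 4.1]{BSW} and used as a black box in the construction of Proposition~\ref{Omega 1/k}. So there is no ``paper's own proof'' to compare against; your proposal is really a sketch of the original Burns--Shnider--Wells argument.

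Your outline is in the right spirit---BSW do indeed combine Fefferman's extension theorem with Chern--Moser theory and a transversality/genericity argument---but one step is stated too loosely to be correct as written. You assert that it suffices to make the map $p\mapsto(\text{Chern--Moser invariants at }p)$ injective on $\partial D_{\tilde\psi}$. The objects produced by the Chern--Moser normalization are not scalar invariants of the point alone; they are tensorial quantities living in representations of the isotropy group of the CR structure, and a CR automorphism sending $p$ to $p'$ matches them only up to that isotropy action. Injectivity of such a ``map'' therefore does not by itself force a CR automorphism to fix each point, nor does fixing each point automatically force the $2$-jet to be the identity. What BSW actually do is first arrange (generically) that the hypersurface has no umbilic points, so that at every boundary point the Chern--Moser curvature picks out a canonical reduction of the CR frame bundle; only then do higher-order normal-form coefficients become genuine pointwise invariants, and a further genericity argument makes the resulting invariant map an embedding. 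Your sketch collapses these two separate genericity steps into one and treats the normal-form coefficients as if they were already invariants, which is the substantive gap.

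The remaining ingredients you list---finite-dimensional bump perturbations to stay in $\Nc$, submersivity of the jet-to-invariant map verified degree by degree from the normalization algorithm, and Sard/Thom transversality over the compact boundary---are the right ones and match the structure of the BSW proof.
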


Fix $\epsilon > 0$ sufficiently small so that $\rho_0 + \epsilon t \in \Nc$ for all $t \in [0,1]$. Then for each $k \in \mathbb{N} \setminus \{0\}$, we can apply the Burns--Shnider--Wells theorem to $\rho_0 + \frac{\epsilon}{k}$,  to find $\tilde\rho_{k} \in \Nc$ such that   $\mathsf{Aut}(D_{\tilde\rho_k})$   is trivial and
\begin{equation}
\label{eqn:estimate on rho_1/k}
\abs{\tilde\rho_{k}-\rho_0 - \frac{\epsilon}{k}}_{U,\max\{k,2\}}<\frac{\epsilon}{k+2^k}.
\end{equation}

Fix a smooth function  $\chi\colon \Rb\to [0,1]$ with $\supp\chi\subset(-1,1)$ and $\chi(0)=1$. For each $k \in \Nb\setminus\{0\}$, define   $\chi_k : \Rb \rightarrow [0,1]$ by
$$
\chi_k(t) =\chi\left(2k(k+1)\left(t-\frac{1}{k}\right)\right).
$$
Then
$$
\chi_k\left(\frac{1}{k}\right)=1\quad \text{and} \quad \chi_k(t)=0 \quad \text{when} \quad \abs{t-\frac{1}{k}}>\frac{1}{2k(k+1)}.
$$
Further, there exists an increasing sequence of constants $\{C_j\}_{j \in \mathbb{N}}$ such that
\begin{equation}\label{eqn:estimate on chi derivatives}
\abs{ \chi_k^{(j)}
(t)
}
 \leq C_j k^{2j}
\end{equation}
for all $t\in\Rb$,  all $k \in \mathbb{N} \setminus \{0\}$, and all $j \in \Nb$.

Define $\rho : U \times [0,1] \rightarrow \Rb$ by
$$
\rho(\cdot,t) = \rho_0+\epsilon t - \sum_{k =1}^\infty \chi_k(t) \left( \tilde\rho_k - \rho_0-\frac{\epsilon}{k} \right).
$$
Notice that when $k \geq m$, Estimates ~\eqref{eqn:estimate on rho_1/k} and~\eqref{eqn:estimate on chi derivatives} imply that
\begin{align*}
\abs{\chi_k(t) \left( \tilde\rho_k - \rho_0-\frac{\epsilon}{k} \right)}_{U \times [0,1], m}& \leq \sum_{j=0}^m \begin{pmatrix} m \\ j \end{pmatrix} \abs{\chi_k}_{[0,1], j} \abs{  \tilde\rho_k - \rho_0-\frac{\epsilon}{k} }_{U, m-j} \\
& \leq 2^m C_m \frac{k^{2m}}{k+2^k}\epsilon.
\end{align*}
Then, since the $\{ \chi_k\}$ have disjoint support,
$$
\abs{\rho}_{U \times [0,1], m} \leq \abs{ \rho_0+\epsilon t - \sum_{k =1}^{m-1} \chi_k(t) \left( \tilde\rho_k - \rho_0-\frac{\epsilon}{k} \right)}_{U \times [0,1], m} +2^m C_m\epsilon\max_{k \geq m}  \frac{ k^{2m}}{k+2^k}
$$
for $m \in \Nb$. Hence $\rho \in C^\infty(U \times [0,1])$.

Also, notice that
\begin{equation}\label{eqn:C2 norm is bounded by epsilon}
\abs{\rho(\cdot,t) - \rho_0}_{U,2} \leq\epsilon \left(1 +  4C_2
\max_{k \geq 1}  \frac{  k^4
}{k+2^k}\right)
\end{equation}
for all $t \in [0,1]$.  Hence, by shrinking $\epsilon > 0$ we may further assume that $\rho(\cdot,t) \in \Nc$ for every $t \in [0,1]$.

Let $D_t : = D_{\rho(\cdot,t)}$. Then $D_0 = D$ and $D_{\frac{1}{k}} = D_{\tilde\rho_{k}}$ for $k \in \Nb \setminus \{0\}$.

\begin{lemma} $D_t \subset D$ for all $t \in [0,1]$.
\end{lemma}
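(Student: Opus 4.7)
The plan is to reduce the claim to the pointwise estimate $\rho(z,t) \geq \rho_0(z)$ on $U \times [0,1]$. This suffices: since $D_t = \{z \in U : \rho(z,t) < 0\} \cup (D \setminus U)$, the inequality gives $\{z \in U : \rho(z,t) < 0\} \subset \{z \in U : \rho_0(z) < 0\} \subset D$, while $D \setminus U \subset D$ trivially.

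First I would verify that the supports $\supp \chi_k \subset [\frac{1}{k} - \frac{1}{2k(k+1)},\, \frac{1}{k} + \frac{1}{2k(k+1)}]$ are pairwise disjoint. Indeed, the centers $\frac{1}{k}$ and $\frac{1}{k+1}$ are separated by $\frac{1}{k(k+1)}$, while the sum of the corresponding half-widths is
$$
\frac{1}{2k(k+1)} + \frac{1}{2(k+1)(k+2)} = \frac{1}{k(k+2)} < \frac{1}{k(k+1)}.
$$
Hence at each $t \in [0,1]$ at most one $\chi_k(t)$ is nonzero.

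Next I would split into two cases. If $t$ lies in no $\supp \chi_k$, then the perturbation series vanishes and $\rho(z,t) - \rho_0(z) = \epsilon t \geq 0$. Otherwise $t \in \supp \chi_k$ for a unique $k \geq 1$, so $t \geq \frac{1}{k} - \frac{1}{2k(k+1)} = \frac{2k+1}{2k(k+1)}$. Using \eqref{eqn:estimate on rho_1/k} in the weaker sup-norm form $\abs{\tilde\rho_k - \rho_0 - \epsilon/k}_{U,0} \leq \epsilon/(k+2^k)$ together with $0 \leq \chi_k \leq 1$, I obtain
$$
\rho(z,t) - \rho_0(z) \geq \epsilon t - \frac{\epsilon}{k+2^k} \geq \epsilon\left(\frac{2k+1}{2k(k+1)} - \frac{1}{k+2^k}\right).
$$

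Finally I would verify the numerical inequality $(2k+1)(k+2^k) \geq 2k(k+1)$ for $k \geq 1$, which follows from the elementary estimate $2^k \geq k+1$: namely $(2k+1)(k+2^k) \geq (2k+1) \cdot 2^k \geq 2k \cdot 2^k \geq 2k(k+1)$. This yields $\rho(z,t) \geq \rho_0(z)$, completing the proof. The argument is routine once the disjoint-support structure is noted; the calibration of the half-widths $\frac{1}{2k(k+1)}$ against the sup-norm bound $\frac{\epsilon}{k+2^k}$ was arranged precisely so this estimate closes, so there is no real obstacle to overcome.
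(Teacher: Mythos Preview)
Your proof is correct and follows essentially the same computation as the paper. The only difference is in the reduction step: the paper checks $\rho(z,t)\geq 0$ only for $z\in\partial D$ and then (implicitly) uses connectedness of $D_t$ to conclude $D_t\subset D$, whereas you establish the stronger pointwise inequality $\rho(z,t)\geq\rho_0(z)$ on all of $U$, which gives the inclusion directly without any topological argument. Your version is also more explicit in verifying the disjointness of the supports of the $\chi_k$ and the final numerical inequality, both of which the paper leaves to the reader.
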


\begin{proof} It suffices to fix $z \in \partial D$ and show that $\rho(t,z) \geq 0$. If $t \in {\rm supp}(\chi_k)$, then
\begin{align*}
\rho(t,z) & = \rho_0(z)+\epsilon t - \chi_k(t) \left(  \tilde\rho_{k}(z)-\rho_0(z)-\frac{\epsilon}{k} \right) \geq 0+\epsilon t - \frac{\epsilon}{k+2^k}  \\
& \geq
 \frac{\epsilon}{k}
 - \frac{\epsilon}{2k(k+1)} - \frac{\epsilon}{k+2^k} > 0.
\end{align*}
Otherwise, if $t \notin \cup_{k=1}^\infty {\rm supp}(\chi_k)$, then
$$
\rho(t,z) = \rho_0(z) + \epsilon t = 0 + \epsilon t \geq 0.
$$
Hence $D_t \subset D$ for all $t \in [0,1]$. \end{proof}

Now fix $f \in \Aut(D)$ non-trivial. Define families of domains
$$
\Omega_t : = \begin{cases} D_{e^{1-1/\abs{t}}} & \text{ if } t \in [-1,1] \setminus \{0\} \\ D & \text{ if } t = 0 \end{cases}
$$
and
$$
\hat{\Omega}_t : = \begin{cases} D_{e^{1-1/\abs{t}}} & \text{ if } 0 < t \leq 1 \\ D & \text{ if } t = 0 \\ f(D_{e^{1-1/\abs{t}}}) & \text{ if } -1 \leq t < 0\end{cases}.
$$

\begin{lemma} After possibly shrinking $\epsilon > 0$, the families $\{\Omega_t\}_{t\in(-1,1)}$ and $\{\hat \Omega_t\}_{t\in(-1,1)}$ are smooth.\end{lemma}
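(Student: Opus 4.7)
The plan is to exploit the infinite-order vanishing at $t=0$ of the auxiliary function $s(t) := e^{1-1/|t|}$, extended by $s(0) := 0$. Since $u \mapsto e^{-1/u}$ is smooth on $[0,\infty)$ with all derivatives vanishing at $0$, a brief chain-rule check gives $s \in C^\infty((-1,1))$ with $s^{(k)}(0)=0$ for every $k \geq 0$ and $s((-1,1)) \subset [0,1)$. After possibly shrinking $\epsilon > 0$, estimate~\eqref{eqn:C2 norm is bounded by epsilon} ensures that $\rho(\cdot, \lambda)$ has nonvanishing gradient on $\overline U$ for every $\lambda \in [0,1)$, so $(z,t) \mapsto \rho(z, s(|t|))$ is a smooth family of local defining functions of $\partial \Omega_t$ near $\partial \Omega_{t_0}$ for every $t_0$.

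For $\{\Omega_t\}$, I fix $t_0 \in (-1,1)$ together with an inward-pointing smooth vector field $N$ on a neighborhood of $\partial\Omega_{t_0}$ satisfying $N\rho_0 \neq 0$, and let $\phi_\sigma$ denote its flow. The implicit function theorem yields a unique smooth function $T(z,\lambda)$ defined near $\partial\Omega_{t_0}\times\{s(|t_0|)\}$, vanishing at $\lambda = s(|t_0|)$, solving $\rho(\phi_{T(z,\lambda)}(z), \lambda) = \rho(z, s(|t_0|))$. Setting $\tau(z,t) := T(z, s(|t|))$, multiplying by a cutoff $\chi$ supported near $\partial\Omega_{t_0}$, and letting $\Phi(z,t) := \phi_{\chi(z)\tau(z,t)}(z)$ produces the required smooth family of diffeomorphisms from $\overline{\Omega_{t_0}}$ onto $\overline{\Omega_t}$.

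For $\{\hat\Omega_t\}$ the only delicate point is $t_0 = 0$. By Fefferman's theorem, $f$ extends smoothly to $\overline D$; combining with a collar produces a smooth diffeomorphism $\tilde f$ of a neighborhood of $\overline D$. With $\phi_\sigma$ the flow of a fixed inward vector field $N$ near $\partial D$ satisfying $N\rho_0 \neq 0$, define $\Sigma_+(x,\lambda)$ by $\rho(\phi_{\Sigma_+}(x), \lambda) = 0$ and $\Sigma_-(x,\lambda)$ by $\rho(\tilde f^{-1}(\phi_{\Sigma_-}(x)), \lambda) = 0$ near $(\partial D, 0)$; both are smooth by the implicit function theorem, and at $\lambda=0$ the second equation forces $\phi_{\Sigma_-(x,0)}(x) \in f(\partial D) = \partial D$, hence $\Sigma_-(x,0)=0$. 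I then set
\[
\sigma(x,t) := \begin{cases} \Sigma_+(x, s(t)) & t \geq 0, \\ \Sigma_-(x, s(-t)) & t \leq 0. \end{cases}
\]
Each piece is $C^\infty$ with $\sigma(x,0)=0$; because $s$ is flat at $0$, Faà di Bruno gives $\partial_t^k \sigma(x,0) = 0$ for every $k\geq 1$, so the two pieces patch to a single $C^\infty$ function on $\partial D \times (-\delta,\delta)$. Extending $\sigma$ via a tubular coordinate and a cutoff to a smooth $\tilde\sigma$ on a neighborhood $\mathcal U$ of $\overline D$, the map $\hat\Phi(z,t) := \phi_{\tilde\sigma(z,t)}(z)$ is a smooth family of diffeomorphisms with $\hat\Phi(\cdot, t)(\overline D) = \overline{\hat\Omega_t}$.

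The main obstacle is precisely the gluing at $t=0$: the natural defining functions $\rho(\cdot, s(t))$ and $\rho(\tilde f^{-1}(\cdot), s(-t))$ limit to \emph{different} defining functions of $D$ as $t\to 0^+$ and $t\to 0^-$, so no single defining function of $\hat\Omega_t$ is continuous across $t=0$, and no naive one-sided deformation will match its mirror at $t=0$. What rescues the argument is that the flatness of $s$ at $0$ propagates through the implicit function theorem to both one-sided normal-flow parameters; two one-sided $C^\infty$ functions that are each flat at $0$ automatically patch to a single $C^\infty$ function, even when the deformations they implement are geometrically of different character.
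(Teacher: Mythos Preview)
Your argument is correct and reaches the goal by a somewhat different route from the paper. The paper first applies Lemma~\ref{rho-t} (a defining-function criterion proved via Ehresmann's fibration theorem) to an extended defining function $\rho^*$ to obtain a single smooth trivialization $\Phi^*$ of the family $\{D_\lambda\}_{\lambda\in[0,1]}$, and then reparametrizes by $\lambda=e^{1-1/|t|}$ to produce $\Phi$ for $\{\Omega_t\}$. For $\{\hat\Omega_t\}$ the paper simply conjugates: it sets $\hat\Phi(\cdot,t)=\Phi(\cdot,t)$ for $t\geq 0$ and $\hat\Phi(\cdot,t)=\tilde f\circ\Phi(\cdot,-t)\circ\Phi(\cdot,0)^{-1}\circ\tilde f^{-1}\circ\Phi(\cdot,0)$ for $t\leq 0$, where $\tilde f$ is Fefferman's smooth extension of $f$; the flatness of $s$ at $0$ makes $\Phi$ flat at $t=0$, so the two branches patch smoothly. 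You instead rebuild the trivializing diffeomorphisms by hand at each stage via normal flow and the implicit function theorem, and for $\{\hat\Omega_t\}$ you glue two normal-flow parameters (one computed from $\rho$, one from $\rho\circ\tilde f^{-1}$) directly, rather than composing a single $\Phi$ with $\tilde f$. Your approach is more self-contained and avoids appealing to the appendix lemma; the paper's is shorter once Lemma~\ref{rho-t} is in hand and makes the role of the automorphism $f$ more transparent as a conjugation.
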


\begin{proof}
     Choose  a smooth bump function $\chi\colon\Cb^d\to[0,1]$ with
  $\supp \chi\subset D_0$ and $\chi=1$ on  $D_0\setminus U$.
     Set $\tilde U=D_0\cup U$. On $\tilde U\times[0,1]$, define
\begin{equation}\label{def-ext}
\rho^*(z,t)=-\chi+(1-\chi) \rho(z,t).
\end{equation}

By Equation~\eqref{eqn:C2 norm is bounded by epsilon}, we may shrink $\epsilon > 0$ so that the function $\rho^*$ satisfies the hypothesis of Lemma~\ref{rho-t}  and hence $\{D_t\}_{t\in [0,1]}$ is a smooth family of domains. Then there are a neighborhood $\mathcal U_0$ of $\overline{D}$ and a   smooth map $\Phi^* : \mathcal U_0 \times [0,1] \rightarrow \mathbb C^d$ such that for every $t \in  [0,1]$ the map $\Phi^*(\cdot,t ) : \mathcal U \rightarrow \Cb^d$ is a diffeomorphism onto its image and $\Phi^*(\cdot,t)(\overline{D_0}) = \overline{D_t}$.

Define $\Phi: \mathcal U_0 \times (-1,1) \rightarrow \mathbb C^d$ by
$$
\Phi(\cdot, t)  = \begin{cases} \Phi^*(\cdot, e^{1-1/\abs{t}}) & \text{ if } t \in (-1,1) \setminus \{0\} \\ \Phi^*(\cdot, 0) & \text{ if } t = 0 \end{cases}.
$$
Then $\Phi$ is smooth and $\Phi(\cdot,t)(\overline{\Omega_0})=\overline{\Omega_t}$ for all $t \in (-1,1)$. Hence $\{\Omega_t\}_{t\in(-1,1)}$ is a smooth family.

By Fefferman's theorem~\cite{fefferman} $f$ extends smoothly to $\overline{D}$. Then, after possibly shrinking $\mathcal U_0$, we can assume that $f$ extends to a smooth map $\tilde f : \mathcal U_0 \rightarrow \Cb^d$, which is a diffeomorphism onto its image.

Next, we can find a smaller neighborhood $\mathcal U_1 \subset \mathcal U_0$ of $\overline{D}$ such that  the map
$$
(z, t) \mapsto\Big( \tilde f\circ \Phi(\cdot,t)\circ(\Phi(\cdot,0))^{-1}\circ \tilde f^{-1}\circ\Phi(\cdot,0)\Big)(z,t)
$$
is well defined and smooth on $\mathcal U_1 \times (-1,0]$. Then define $\hat\Phi: \mathcal U_1 \times (-1,1) \rightarrow \mathbb C^d$ by
$$
\hat\Phi(\cdot,t)=  \begin{cases} \Phi(\cdot, t) & \text{ if } 0 \leq t < 1 \\ \tilde f\circ \Phi(\cdot,-t)\circ(\Phi(\cdot,0))^{-1}\circ \tilde f^{-1}\circ\Phi(\cdot,0) & \text{ if } -1 < t \leq 0 \end{cases}.
$$
Notice that $\hat \Phi$ is smooth since $t \mapsto e^{-1/\abs{t}}$ vanishes to infinite order at $t = 0$. Further, $\hat\Phi(\cdot,t)(\overline{\hat\Omega_0})=\overline{\hat\Omega_t}$. Hence $\{\hat\Omega_t\}_{t\in(-1,1)}$ is a smooth family. \end{proof}

Now suppose that $\{F_t : \Omega_t \rightarrow \hat{\Omega}_t\}_{t \in (-1, 1)}$ is a family of biholomorphisms.  By construction $\Omega_{\pm 1/(1+\log(k))}=D_{1/k}$ has trivial automorphism group for all $k =1,2, \dots$. Hence we must have
$$
 F_{1/(1+\log(k))} = \id \quad \text{and} \quad F_{-1/(1+\log(k))} = f.
$$
So $\{F_t : \Omega_t \rightarrow \hat{\Omega}_t\}_{t \in (-1, 1)}$ is discontinuous at $t=0$.

\subsection{Proof of Proposition~\ref{prop:ball example}}\label{sec:proof of ball example} Let $g_t$ and $\hat{g}_t$ be the Bergman metrics on $\Omega_t$ and $\hat\Omega_t$, respectively. These metrics are smooth in $t$ by work of Hamilton, see Corollary~\ref{Bergman-kernel}. Then we can find a neighborhood $\Tc_0$ of $t_0$ in $\Tc$, points $p_0,q_0 \in \Cb^d$, smooth maps $v_1,\dots, v_d : \Tc_0 \rightarrow T_{p_0} \Cb^d$, and smooth maps $w_1,\dots, w_d : \Tc_0 \rightarrow T_{q_0} \Cb^d$ such that: if $t \in \Tc_0$, then
\begin{enumerate}\renewcommand{\labelenumi}{(\roman{enumi})}
\item $p_0\in \Omega_t$ and $v_1(t), \dots, v_d(t)$ is an unitary basis of $T_{p_0} \Cb^d$ with respect to $g_t$,
\item $q_0\in \hat{\Omega}_t$ and $w_1(t), \dots, w_d(t)$ is an unitary basis of $T_{q_0} \Cb^d$ with respect to $\hat{g}_t$.
\end{enumerate}
Since $\Omega_t$ and $\hat \Omega_t$ are both biholomorphic to the unit ball, there exists a holomorphic isometry $F_t : (\Omega_t, g_t) \rightarrow (\hat\Omega_t, \hat g_t)$ such that $F_t(p_0) = q_0$ and
$$
d(F_t)_{p_0} v_j(t) = w_j(t)
$$
for $j=1,\dots, d$. Clearly, $d(F_t)_{p_0}$  is smooth in $t$.
Since $\injrad_{{g}_{t}}(p_0)=\infty$,  Formula \eqref{efp} and Observation~\ref{obs:smoothness of exponential maps}  imply that
$$
F_t=\exp^{\hat{g}_t}_{q_0} \circ d(F_t)_{p_{0}} \circ (\exp^{g_{t}}_{p_{0}})^{-1}
$$
 is a smooth family of biholomorphisms.

\appendix

\section{Smooth families of  domains}\label{sec:appendix}

In this Appendix we prove a technical result about smooth families of domains. In particular, we will show that one could equivalently define a smooth family of strongly  pseudoconvex  domains in terms of smooth families of defining functions. This was used in Section~\ref{sec: examples and conjecture}.

\begin{lemma}\label{rho-t} Suppose $U \subset \Cb^d$ is an open relatively compact subset. Assume $\rho: U\times\mathcal T \rightarrow \Rb$ is a smooth function where for each $t \in \Tc$:
\begin{enumerate}\renewcommand{\labelenumi}{$(\alph{enumi})$}
\item $\Omega_{t}:=\{z\in U\colon \rho(   z
,t)<0\}$ is a strongly pseudoconvex domain,
\item $\overline{\Omega_t} \subset U$,
\item $\nabla_z\rho(\cdot, t) \neq0$ on the zero set of $\rho(\cdot, t)$, and
\item $\rho(\cdot,t)$ is strongly plurisubharmonic in a neighborhood of $\partial \Omega_t$.
\end{enumerate}
Then $\{\Omega_t\}_{t \in \Tc}$ is a smooth family of strongly pseudoconvex domains.
\end{lemma}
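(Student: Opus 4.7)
Plan: Fix $t_0 \in \Tc$. The strategy is to realize the required $\Phi$ as the flow (in the $t$-direction) of a suitably truncated time-dependent vector field $V(z,t)$ which is tangent to the level sets $\{\rho(\cdot, t) = 0\}$ near $\partial\Omega_{t_0}$; this automatically carries $\partial\Omega_{t_0}$ onto $\partial\Omega_t$ while leaving everything far from the boundary fixed.

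First, using hypothesis (c) and the compactness of $\partial\Omega_{t_0} \subset U$, I would produce an open neighborhood $W$ of $\partial\Omega_{t_0}$ with $\overline W \subset U$ and a neighborhood $\Tc_0 \subset \Tc$ of $t_0$ such that $\nabla_z \rho(z,t) \neq 0$ on $\overline W \times \Tc_0$ (where $\nabla_z$ denotes the real gradient in $\Rb^{2d}$). Choose $W_0$ open with $\partial\Omega_{t_0} \subset W_0 \Subset W$ and a cutoff $\chi \in C^\infty(\Cb^d)$ with $\supp \chi \subset W$ and $\chi \equiv 1$ on $W_0$. Then set
$$
V(z,t) := -\,\chi(z)\,\frac{\partial_t \rho(z,t)}{\abs{\nabla_z \rho(z,t)}^2}\,\nabla_z \rho(z,t)
$$
on $W \times \Tc_0$, extended by $0$ where $\chi = 0$; this is a smooth time-dependent vector field on $\Cb^d \times \Tc_0$ with $z$-support in $W$.

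Next, let $\Phi(z,t)$ be the solution of the nonautonomous ODE $\dot\Phi = V(\Phi,t)$ with $\Phi(z, t_0) = z$. Standard ODE theory gives, after possibly shrinking $\Tc_0$, a smooth map $\Phi$ on $\mathcal U \times \Tc_0$ for some open neighborhood $\mathcal U$ of $\overline{\Omega_{t_0}}$, with each $\Phi(\cdot, t)$ a diffeomorphism onto its image (being $C^\infty$-close to $\id$). On $W_0$, where $\chi \equiv 1$, a direct computation yields
$$
\frac{d}{dt}\rho\bigl(\Phi(z,t), t\bigr) = \nabla_z \rho \cdot V + \partial_t \rho = 0
$$
along the trajectory, so $\rho(\Phi(z,t),t) = \rho(z,t_0)$ as long as the trajectory stays in $W_0$. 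Shrinking $\Tc_0$ once more ensures this for every $z \in \partial\Omega_{t_0}$, whence $\Phi(\cdot, t)(\partial\Omega_{t_0}) \subset \partial\Omega_t$; a continuity argument in $t$ starting from $\Phi(\cdot, t_0) = \id$ then upgrades this to $\Phi(\cdot, t)(\overline{\Omega_{t_0}}) = \overline{\Omega_t}$, since $\Phi(\cdot, t)(\Omega_{t_0})$ must lie on the same side of $\partial\Omega_t$ as $\Omega_{t_0}$.

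The main obstacle lies in the last paragraph: keeping the trajectory of a point on $\partial\Omega_{t_0}$ inside $\{\chi \equiv 1\}$ so that the conservation identity actually applies, and then correctly identifying the image as $\overline{\Omega_t}$ rather than the other component cut out by $\partial\Omega_t$. Both issues are resolved by further shrinking $\Tc_0$ via compactness of $\overline{\Omega_{t_0}}$ and continuity of $\Phi$ in $t$; the remaining ingredients---smoothness of $\Phi$, the diffeomorphism property, and the conservation computation itself---are routine consequences of nonautonomous ODE theory.
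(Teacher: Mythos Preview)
Your flow-based argument is correct and takes a genuinely different route from the paper's. The paper first applies Ehresmann's fibration theorem to the projection $\pi:\{(z,t):\rho(z,t)=0\}\to\Tc$ (which is a proper submersion since $\nabla_z\rho\neq 0$) to obtain a smooth trivialization $F:\partial\Omega_{t_0}\times\Tc_0\to\bigcup_{t}\partial\Omega_t\times\{t\}$, and only then extends $F$ off the boundary by pushing along the inward normals $\mathbf n_t$ and blending with the identity via a bump function. Your construction instead produces $\Phi$ in one stroke as the flow of the truncated normal field $V=-\chi\,(\partial_t\rho/|\nabla_z\rho|^2)\nabla_z\rho$; this is more elementary (no black-box Ehresmann) and is in effect a direct proof of the relevant instance of that theorem, with the pleasant side effect that $\Phi(\cdot,t)$ preserves \emph{every} nearby level set of $\rho$, not just the zero set. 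One caveat worth recording: as written, the ODE $\dot\Phi=V(\Phi,t)$ tacitly assumes $\dim\Tc=1$, whereas the paper allows $\Tc$ to be an open set in an arbitrary smooth manifold. The fix is routine---work in a coordinate ball around $t_0$ and, for each $t$, integrate your vector field along the radial segment $s\mapsto (1-s)t_0+st$; smooth dependence of ODE solutions on parameters then yields the joint smoothness in $(z,t)$.
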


\begin{proof} Smoothness is a local property and so we may assume that $\Tc$ is an open connected set in $\Rb^k$ for some $k \geq 1$. Let
$$
M : = \{ (z,t) \in U \times \Tc : \rho(z,t) = 0\}.
$$
Since $\nabla_{z,t} \rho \neq 0$ in a neighborhood of $M$, the implicit function theorem implies that $M$ is a smooth submanifold of  $\Cb^d \times \Rb^k$ with
$$
M \cap (\Cb^d \times \{t\}) = \partial \Omega_t \times \{t\}
$$
for all $t \in \Tc$. Further, since $\nabla_z\rho \neq0$ on $M$, the projection $\pi : M \rightarrow \Tc$ is a proper submersion. So by Ehresmann's fibration theorem, $\pi$ is a locally trivial fibration.

Now fix $t_0 \in \Tc$. Since $\pi$ is a locally trivial fibration,  there is an open neighborhood $\Tc_0$ of $t_0$ in $\Tc$ and there is a diffeomorphism $F : \partial\Omega_{t_0} \times \Tc_0 \rightarrow \pi^{-1}(\Tc_0)$ with $F(\cdot, t_0) = \id_{\partial\Omega_{t_0}}$ and
$$
F\left(\partial \Omega_{t_0} \times\{ t\}\right) = \partial \Omega_t \times \{t\}
$$
for each $t \in \Tc$.

For each $t \in \Tc$ and $z_0 \in \partial\Omega_t$, let
$$
\textbf{n}_t(z_0) = \frac{ - \nabla_z \rho(z_0, t)}{\norm{\nabla_z \rho(z_0, t)}}
$$
denote the inward pointing unit normal vector of $\partial\Omega_t$ at $z_0$. Since $\partial\Omega_{t_0}$ is a smooth hypersurface, we can find a neighborhood $\Vc$ of $\partial\Omega_{t_0}$ such that the  closest point projection $\pi_{t_0} : \Vc \rightarrow \partial\Omega_{t_0}$ is smooth. Then let $\delta_{t_0} : \Vc \rightarrow \Rb$ be the signed distance to $\partial\Omega_{t_0}$, that is the unique map where
$$
x = \pi_{t_0}(x) +\delta_{t_0}(x) \textbf{n}_{t_0}(\pi_{t_0}(x))
$$
for all $x \in \Vc$.

Fix a smooth bump function $\chi : \Cb^d \rightarrow [0,1]$ such that ${\rm supp}(\chi) \subset\Vc$ and $\chi \equiv 1$ on a relatively compact open neighborhood $\Vc_0 \subset \Vc$ of $\partial\Omega_{t_0}$. By shrinking $\Tc_0$, we may assume that $\partial\Omega_t \subset \Vc_0$ for all $t \in \Tc_0$. Then define $\Phi : \Cb^d \times \Tc_0 \rightarrow \Cb^d$ by
$$
\Phi(z,t) = (1-\chi(z))z + \chi(z)\Big( F(\pi_{t_0}(z), t) + \delta_{t_0}(z) \textbf{n}_t\big( F(\pi_{t_0}(z),  t)\big) \Big).
$$
Then $\Phi$ is smooth and $\Phi(\cdot, t_0) = \id_{\Cb^d}$. So by shrinking $\Tc_0$ further, we may assume that $\Phi(\cdot, t)|_{\Vc_0}$ is a diffeomorphism onto its image for all $t \in \Tc_0$. Further, since $\Phi(\cdot, t)|_{\partial \Omega_{t_0}} = F(\cdot, t)$, we have $\Phi(\cdot, t)(\overline{\Omega_{t_0}}) = \overline{\Omega_t}$. Thus $\{\Omega_t\}_{t \in \Tc}$ is a smooth family.
\end{proof}

\section{Smoothness of Bergman kernels in families of domains}\label{sec:smoothness of Bergman kernels}

In this Appendix we explain how to use a result of Hamilton to deduce that the Bergman kernel varies smoothly in a smooth family of strongly pseudoconvex domains. This consequence of Hamilton's work is well known, see for instance the discussion in~\cite{Komatsu} or ~\cite{wang2014variation}, but for the reader's convenience we provide a detailed exposition of this application.

\subsection{Hamilton's abstract result}\label{appendix:Hamilton's abstract result}

In this section we recall a result of Hamilton. In the discussion that follows, given a vector bundle $E \rightarrow X$, we let $\mathscr{C}(X;E)$ denote the  Fr\'echet space of smooth sections.

Now fix  $r \in \Nb$, a compact manifold $X$ with non-empty boundary, vector bundles $F,M$ over $X$, and vector bundles $P,Q$ over $\partial X$ where
\begin{enumerate}\setcounter{enumi}{-1}
\item ${\rm rank} \, P + {\rm rank} \, Q = {\rm rank} \, F$.
\end{enumerate}
We consider a family of operators
$$
(\mathcal{E}(m))_{m \in \Oc}=  (E(m), p(m), q(m))_{m \in \Oc}:  \mathscr{C}(X;F) \rightarrow  \mathscr{C}(X;F)\times  \mathscr{C}(\partial X;P) \times  \mathscr{C}(\partial X;Q)
$$
indexed by an open neighborhood $\Oc \subset \mathscr{C}(X;M)$ of the zero section satisfying the eight properties listed below.

We suppose that
\begin{enumerate}
\item $E(m)$ is a linear partial differential operator of degree 2,
\item $p(m)$ is a linear partial differential operator of degree 0,
\item $q(m)$ is a linear partial differential operator of degree  1, and
\item the coefficients of these operators in any local trivialization depend smoothly on $m$ and its derivatives up to degree $r$.
\end{enumerate}
We also suppose that each of these vector bundles is endowed with a family of Hermitian metrics, all denoted by $(\ip{\cdot, \cdot}_m)_{m \in \Oc}$, whose coefficients in any local trivialization depend smoothly on $m$
   and its derivatives up to degree $r$. We further suppose that with respect to these inner products the principal
symbols $\sigma_{E(m)}$, $\sigma_{p(m)}$, and $\sigma_{q(m)}$ of $E(m)$, $p(m)$, and $q(m)$ satisfy the following:
\begin{itemize}
\item When $m \in \Oc$, $z \in X$, $f,g \in F_z$, and $\xi \in T^*_z X$
\begin{enumerate}\setcounter{enumi}{4}
\item $\ip{ \sigma_{E(m)}(\xi) f, f}_m > 0$ with equality if and only if $\xi = 0$ or $f = 0$,
\item $\ip{ \sigma_{E(m)}(\xi) f, g}_m = \ip{ f, \sigma_{E(m)}(\xi) g}_m$.
\end{enumerate}
\item There exists a normal positive covector field $ \nu_m
 \in \mathscr{C}(\partial X, T^* X|_{\partial X})$   where $\nu_m$ depends on $m$ and its derivatives up to degree $r$ such that: when $m \in \Oc$, $z \in \partial X$, $f,g \in F_z$, and $\xi \in T^*_z X$
\begin{enumerate}\setcounter{enumi}{6}
\item $\ip{ \sigma_{E(m)}(\nu_m) f, g}_m = \ip{ \sigma_{p(m)} f, \sigma_{p(m)} g}_m+\ip{ \sigma_{q(m)}(\nu_m) f, \sigma_{q(m)}(\nu_m) g}_m$,
\item if $\sigma_{p(m)} f = \sigma_{q(m)} g = 0$, then
$$
\ip{ D\sigma_{E(m)}(\nu_m; \eta) f, g}_m = \ip{ \sigma_{q(m)}(\eta) f, \sigma_{q(m)}(\nu_m) g}_m+\ip{ \sigma_{q(m)}(\nu_m) f, \sigma_{q(m)}(\eta) g}_m.
$$
\end{enumerate}
\end{itemize}
Recall that when $z \in \partial X$, a cotangent vector $f \in T_z^* X$ is \emph{normal} if $f|_{T_z \partial X} \equiv 0$ and is \emph{positive} if $f(v)  > 0$ when $v \in T_z X$ is inward pointing.

\begin{remark} When $X = \overline{\Omega}$ where $\Omega$ is a smoothly bounded domain in a larger manifold (e.g. $\Cb^d$) and $\rho$ is a defining function for $\Omega$, then  $\nu_m=-a_md\rho$ for some positive function $a_m : \partial X \rightarrow (0,\infty)$. Further, we may assume that $a_m=1$ by scaling the metrics on $P$ without changing the metrics on $F,Q$.
\end{remark}

We also suppose that $X$ has a family of volume forms  $(dV_m)_{m \in \Oc}$
 which also depends smoothly on $m$  and its derivatives up to degree $r$. Then we can define a family of inner products on $\mathscr{C}(X;F)$ by
$$
\langle\langle f,g \rangle\rangle_m = \int_X \ip{f,g}_m dV_m.
$$
Let $\norm{\cdot}_{0}$ denote the norm on $\mathscr{C}(X;F)$ associated  to $\langle\langle \cdot, \cdot \rangle\rangle_0$.

Let  $dS_0$
 be a volume form on $\partial X$ and then given $f \in\mathscr{C}(X;F|_{\partial X})$, define
$$
\abs{f}_0 : = \sqrt{  \int_{\partial X} \ip{f,f}_0 dS_0}
$$
(notice that changing the volume form $dS_0$ produces an equivalent norm).

The family $\mathcal{E}$ satisfies an \emph{uniform persuasive estimate} if there exists $C > 0$ such that
$$
\abs{f}_0^2 \leq C\left( {\rm Re} \langle\langle E(m)f,f \rangle\rangle_m + \norm{f}_{0}^2\right)
$$
for all $m \in \Oc$ and $f \in \mathscr{C}(X;F)$ satisfying $p(m)f = 0$ and $q(m)f = 0$.

Let
$$
H(m) : = \{ f \in \mathscr{C}(X;F)  : E(m)f = 0, \, p(m)f =0, \, q(m) f = 0\}.
$$

\begin{theorem}[\cite{Hamilton1}, p.~438] \label{Hamilton-result}
Suppose that $\mathcal  E$ satisfies a uniform persuasive estimate, and $H(0)=0$. Then
 $H(m)=0$ for all $m$ in a possibly smaller neighborhood $\mathcal O_0 \subset\mathcal O$ of the zero section in $\mathscr{C}(X;M)$ and hence for each $m \in \mathcal O_0$,
  the linear operator $\mathcal  E_m$ is invertible; moreover if we define
$$
\mathcal  E^{-1}\colon (
\mathcal O_0\subset \mathscr{C}(X;M)
)\times (\mathscr{C}(X;F)\oplus \mathscr{C}(\partial X;P)\oplus \mathscr{C}(\partial X;Q))\to \mathscr{C}(X;F)
$$
by letting $\mathcal  E^{-1}(m)(g,h,k)=f$ be the solutions of $\mathcal  E(m)f=(g,h,k)$, then the family of inverse $\mathcal  E^{-1}$ is a smooth tame map.
\end{theorem}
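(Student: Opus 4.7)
The plan is to view $\mathcal{E}(m) = (E(m), p(m), q(m))$ as a family of elliptic boundary value problems of Lopatinski--Shapiro type and apply Hamilton's tame Fr\'echet inverse function theorem machinery. The conditions (5)--(8) should be read as precisely the Lopatinski--Shapiro coercivity package: (5) makes $E(m)$ interior elliptic with positive-definite principal symbol; (6) makes $E(m)$ formally self-adjoint in leading order; and (7)--(8) are the covering (Shapiro--Lopatinski) conditions for $(p(m), q(m))$ adapted to the conormal $\nu_m$. The uniform persuasive estimate is the quantitative coercivity input that renders all subsequent estimates uniform in $m$.

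First, I would bootstrap the uniform persuasive estimate, via standard elliptic regularity, to a full Sobolev a priori bound: for every integer $s \geq 0$ there is a constant $C_s$, depending tamely on a bounded number of derivatives of $m$, such that
$$\|f\|_{s+2} \leq C_s\left(\|E(m)f\|_s + \|p(m)f\|_{s+2} + \|q(m)f\|_{s+1} + \|f\|_0\right)$$
for every $f \in \mathscr{C}(X;F)$ and every $m \in \mathcal{O}$. This is routine once Lopatinski--Shapiro has been verified, but must be tracked uniformly in $m$, which is where hypothesis (4) on the smooth dependence of the coefficients enters.

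Second, I would use the estimate at $s=0$ together with $H(0)=0$ to show that $H(m)=0$ on a smaller neighborhood $\mathcal{O}_0$. If not, pick $f_n \in H(m_n)$ with $\|f_n\|_0 = 1$ and $m_n \to 0$. The a priori estimate bounds $f_n$ in every Sobolev norm, so by compact embedding a subsequence converges in $C^\infty$ to a nonzero element of $H(0)$, a contradiction. The formal self-adjointness from (6) ensures that the adjoint problem satisfies the same hypotheses, so its kernel (i.e.\ the cokernel of $\mathcal{E}(m)$) vanishes as well, giving that $\mathcal{E}(m)$ is bijective on smooth sections for $m \in \mathcal{O}_0$.

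The main obstacle is proving that $\mathcal{E}^{-1}$ is a \emph{smooth tame} map of Fr\'echet spaces in Hamilton's Nash--Moser sense. Differentiating $\mathcal{E}(m) \mathcal{E}^{-1}(m) = \mathrm{Id}$ in $m$ writes each partial $D^j_m \mathcal{E}^{-1}$ as the solution of the same elliptic boundary value problem with source terms polynomial in the derivatives of $m$ and in lower-order partials $D^i_m \mathcal{E}^{-1}$. Inducting on $j$ and invoking the tame a priori estimate at each step produces tame bounds on all derivatives of $\mathcal{E}^{-1}$, which is exactly smooth tameness. The uniformity in $m$ provided by the persuasive estimate is what makes this bookkeeping work, and it is exactly this argument that is carried out in Hamilton's original paper.
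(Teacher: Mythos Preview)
The paper does not prove this theorem: it is quoted verbatim from Hamilton's paper \cite{Hamilton1}, p.~438, and invoked as a black box. So there is no proof in the paper to compare against; your proposal is in effect a sketch of Hamilton's own argument rather than a reconstruction of anything the authors wrote.

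As a sketch of Hamilton's proof, your outline is broadly on target. The interpretation of (5)--(8) as Lopatinski--Shapiro data, the uniform elliptic a priori estimates, the compactness argument to propagate $H(0)=0$ to nearby $m$, and the inductive differentiation of $\mathcal{E}(m)\mathcal{E}^{-1}(m)=\mathrm{Id}$ to get tame bounds on all $m$-derivatives of $\mathcal{E}^{-1}$ are all the right ingredients. One point deserves more care: you invoke ``formal self-adjointness from (6)'' to kill the cokernel, but (6) only says the \emph{principal symbol} of $E(m)$ is pointwise Hermitian---it does not make the operator $\mathcal{E}(m)$ formally self-adjoint, and the boundary operators $p(m),q(m)$ are not symmetric in any obvious sense. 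Hamilton handles surjectivity by showing the problem has index zero (via a homotopy to a model problem) and then using injectivity, or equivalently by verifying that the adjoint boundary problem also satisfies the Lopatinski--Shapiro condition; either way, the step is more delicate than a one-line appeal to self-adjointness.
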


\subsection{The $\bar{\partial}$-Neumann operator on a strongly pseudoconvex domain}\label{appendix:dbar Neumann}

 In this section we recall the definition of the $\bar{\partial}$-Neumann operator and then place this operator into Hamilton's framework. For more details on the $L^2$ theory of the $\bar{\partial}$-Neumann operator, see ~\cite{Chen-Shaw}.

For the rest of the section fix a strongly pseudoconvex domain $\Omega \subset \Cb^d$ and let $\rho : \Cb^d \rightarrow \Rb$ be a defining function for $\Omega$ with $\norm{\nabla \rho} = 1$ on $\partial \Omega$.
  Thus    $-d\rho$  is the unique normal covector satisfying $d\rho(\nabla\rho)=1$ on $\partial\Omega$.

Given a smooth $(p,q)$-form $f=\sum^\prime_{\abs{I}=p,\abs{J}=q} f_{I,J} dz^I \wedge d\bar{z}^J$, define
$$
\bar{\partial} f = \sideset{}{'}\sum_{\abs{I}=p,\abs{J}=q} \sum_j \frac{\partial f_{I,J}}{\partial \bar{z}^j} d\bar{z}^j \wedge dz^I \wedge d\bar{z}^J.
$$
One can view $\bar{\partial}=\bar{\partial}_{p,q} : L^2_{(p,q)}(\Omega) \rightarrow L^2_{(p,q+1)}(\Omega)$ as a densely defined operator from the space of $L^2$-integrable $(p,q)$-forms on $\Omega$ to the space of $L^2$-integrable $(p,q+1)$-forms on $\Omega$. Then let $\bar{\partial}^* :  L^2_{(p,q+1)}(\Omega) \rightarrow  L^2_{(p,q)}(\Omega)$ denote the $L^2$-adjoint. The associated Laplacian
$$
\square= \bar{\partial}\bar{\partial}^*+\bar{\partial}^*\bar{\partial}: L^2_{(p,q)}(\Omega) \rightarrow L^2_{(p,q)}(\Omega)
$$
has domain
$$
\left\{ f \in L^2_{(p,q)}(\Omega) : f \in {\rm dom}(\bar{\partial}) \cap {\rm dom}(\bar{\partial}^*), \, \bar\partial f \in  {\rm dom}(\bar{\partial}^*),  \bar\partial^* f \in  {\rm dom} (\bar{\partial})\right\}.
$$
This Laplacian has a bounded inverse $N_{p,q}  : L^2_{(p,q)}(\Omega)  \rightarrow L^2_{(p,q)}(\Omega)$ which is called the \emph{$\bar\partial$-Neumann operator}, see ~\cite[Section 4.4]{Chen-Shaw}.

Remarkably, the Bergman kernel can be recovered from this operator. Fix a smooth function $\chi : \Rb \rightarrow [0,1]$ such that $\chi \equiv 1$ near 0, ${\rm supp}(\chi) \subset [-1,1]$, and
$$
\int_{\Cb^d} \chi(\abs{z}) dV = 1.
$$
Then fix $w \in \Omega$ and $\epsilon > 0$ such that the Euclidean ball $B_{\Cb^d}(w,\epsilon)$ is contained in $\Omega$. Then
\begin{equation}\label{eqn:Bergman kernel in terms of dbar Neumann}
K_\Omega(\cdot, w) = f_w - \bar\partial^* N_{0,1}\bar\partial f_w,
\end{equation}
where $f_w(z) = \frac{1}{\epsilon^{2d}}\chi(\abs{z-w}/\epsilon)$.  For a proof of the above formula of Kerzman and Bell~\cite{kerzman, bell},
see ~\cite[Theorem 4.4.5 and p.~147]{Chen-Shaw}.

Next we place the $\bar\partial$-Neumann operator into Hamilton's framework. To match the notation in the previous section, we let $X : = \overline{\Omega}$.   Let $ \Lambda^{1}(X) =(T^* X)\otimes\mathbb C$.
Then consider the vector bundles
\begin{align*}
F & := \Lambda^{(0,1)}(X) = (  \Lambda^{1}(X))^{(0,1)}, \quad P  := \partial X \times \Cb, \text{ and} \\
Q & := \left\{ \sum c_j d\bar{z}^j  \in \Lambda^{(0,1)}(X)|_{\partial X} : \sum c_j  \frac{\partial \rho}{\partial z^j} = 0 \right\}
\end{align*}
over $X$, $\partial X$, and $\partial X$ respectively. Notice that ${\rm rank} \, P + {\rm rank} \, Q = {\rm rank} \, F$.

Given a $(p,q)$-form $f=\sum^\prime_{\abs{I}=p,\abs{J}=q} f_{I,J} dz^I \wedge d\bar{z}^J$, define
$$
\vartheta f=-\sum_j \sideset{}{^\prime}\sum_{|K|=q-1}  \frac{\partial f_{jK}}{\partial z^j}d\bar z^K.
$$
Then define operators $E :  \mathscr{C}(X;F) \rightarrow \mathscr{C}(X;F)$,  $p : \mathscr{C}(X;F) \rightarrow \mathscr{C}(X;P)$, and $q : \mathscr{C}(X;F) \rightarrow \mathscr{C}(X;Q)$ by
\begin{align*}
\tilde{E}(f) & = \left( \bar{\partial}\vartheta +\vartheta\bar{\partial}\right)f = \sum_j -\frac{1}{4}\Delta f_j d\bar z^j, \quad
\tilde{p}(f)  = \sum_{j} f_j \frac{\partial \rho}{\partial z^j}, \text{ and} \\
\tilde{q}(f) & =  2\sum_{k,j} \left( \frac{\partial f_k}{\partial \bar{z}^j} - \frac{\partial f_j}{\partial \bar{z}^k} \right)  \frac{\partial \rho}{\partial z^j} d\bar{z}^k
\end{align*}
when $f = \sum_j f_j d\bar z^j$.

The vector bundle $\Lambda^1(\Cb^d) \rightarrow \Cb^d$ has a standard Hermitian inner product where $dz^1, \dots, dz^d, d\bar{z}^1, \dots, d\bar{z}^d$ is a unitary basis of each fiber.  This induces Hermitian inner products  on the vector bundles $F$ and $Q$.
 We can also endow vector bundle $P = \partial X \times \Cb$ with a standard inner product: $\ip{(z,v), (z,w)} = v\bar{w}$.

With respect to these inner products, the symbols  $\sigma_{\tilde E}$, $\sigma_{\tilde{p}}$, and $\sigma_{\tilde{q}}$ of $\tilde{E}$, $\tilde{p}$, and $\tilde{q}$ satisfy
\begin{itemize}
\item When $z \in X$, $f,g \in F_z$, and $\xi \in T^*_z X$
\begin{enumerate}[label={(\arabic*$'$)}]\setcounter{enumi}{4}
\item $\ip{ \sigma_{\tilde E}(\xi) f, f}  \geq 0$ with equality if and only if $\xi = 0$ or $f = 0$,
\item $\ip{ \sigma_{\tilde E}(\xi) f, g} = \ip{ f, \sigma_{\tilde E}(\xi) g}$,
\end{enumerate}
\item When $z \in \partial X$, $f,g \in F_z$, and $\nu \in T^*_z X$ is the cotangent vector with $\nu(T_z \partial X) = 0$ and $\nu(\nabla \rho(z)) = -1$
\begin{enumerate}[label={(\arabic*$'$)}]\setcounter{enumi}{6}
\item $\ip{ \sigma_{\tilde E}(\nu) f, g} = \ip{ \sigma_{\tilde p} f, \sigma_{\tilde p} g}+\ip{ \sigma_{\tilde q}(\nu) f, \sigma_{\tilde q}(\nu) g}$,
\item if $\eta \in T^*_z X$ and $\sigma_{\tilde p} f = \sigma_{\tilde p} g = 0$, then
$$
\ip{ D\sigma_{\tilde E}(\nu; \eta) f, g} = \ip{ \sigma_{\tilde q}(\eta) f, \sigma_{\tilde q}(\nu) g}+\ip{ \sigma_{\tilde q}(\nu) f, \sigma_{\tilde q}(\eta) g}.
$$
\end{enumerate}
\end{itemize}

The next result explains the relationship between $\tilde E$ and $\square$.

\begin{proposition}\label{prop:E versus Square} Suppose $f \in  \mathscr{C}(X;F)$. Then we have:
\begin{enumerate}\renewcommand{\labelenumi}{$(\alph{enumi})$}
\item $f \in {\rm dom}(\bar\partial^*)$ if and only if $\tilde p(f) = 0$. Further, in this case $\bar\partial^* f = \vartheta f$.
\item $\bar{\partial} f \in {\rm dom}(\bar\partial^*)$ if and only if $\tilde q(f) = 0$. Further, in this case $\bar\partial^* \bar \partial f = \vartheta \bar \partial f$.
\end{enumerate}
Hence, if $\tilde p(f) = 0$ and $\tilde q(f) = 0$, then $\square f = \tilde E(f)$.
\end{proposition}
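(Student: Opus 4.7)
The plan is to follow the standard pattern from the $L^2$-theory of the $\bar\partial$-Neumann problem (see, e.g., \cite[Chapter 4]{Chen-Shaw}): identify the formal adjoint of $\bar\partial$ via integration by parts, and read off the boundary condition that characterizes membership in $\text{dom}(\bar\partial^*)$ on smooth up-to-the-boundary forms. Parts (a) and (b) are then two parallel applications of this principle, and the final identity $\square f = \tilde E(f)$ drops out of the definition of $\tilde E$ already recorded in the excerpt.

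For part (a), I would fix an arbitrary smooth test function $u \in \mathscr{C}(X, \Cb)$ and compute $\langle\langle \bar\partial u, f \rangle\rangle$. Writing $\bar\partial u = \sum_j (\partial u/\partial \bar z^j)\, d\bar z^j$ and applying the divergence theorem on $\Omega$, using that $-d\rho$ is the unit outer covector at $\partial\Omega$ (since $\|\nabla\rho\| = 1$), one obtains
\begin{equation*}
\langle\langle \bar\partial u, f \rangle\rangle = \int_\Omega u\, \overline{\vartheta f}\, dV + \int_{\partial\Omega} u\, \overline{\tilde p(f)}\, dS.
\end{equation*}
The boundary integral vanishes for every smooth $u$ if and only if $\tilde p(f) = 0$ on $\partial\Omega$; this is precisely the classical characterization of $\text{dom}(\bar\partial^*)$ on smooth forms, and in that case $\bar\partial^* f = \vartheta f$.

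For part (b), I would apply the same reasoning to the smooth $(0,2)$-form $\bar\partial f = \sum_{j<k} A_{jk}\, d\bar z^j \wedge d\bar z^k$, where $A_{jk} = \partial f_k/\partial \bar z^j - \partial f_j/\partial \bar z^k$, extended antisymmetrically to all ordered pairs $(j,k)$. Pairing $\bar\partial f$ against a test $(0,1)$-form $v = \sum_k u_k\, d\bar z^k$ and integrating by parts, the resulting boundary term vanishes for every choice of $v$ if and only if $\sum_j A_{jk}\,(\partial\rho/\partial z^j) = 0$ on $\partial\Omega$ for each $k$. Reassembling these scalar conditions against $d\bar z^k$ and absorbing the normalization factor $2$ in the definition of $\tilde q$ (which reflects the passage from the antisymmetrically-extended $A_{jk}$ to the expression written in the excerpt), one recovers exactly $\tilde q(f) = 0$. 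Hence $\bar\partial f \in \text{dom}(\bar\partial^*)$ iff $\tilde q(f) = 0$, and in that case $\bar\partial^* \bar\partial f = \vartheta \bar\partial f$.

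The final statement is then immediate: when $\tilde p(f) = \tilde q(f) = 0$,
\begin{equation*}
\square f = \bar\partial\bar\partial^* f + \bar\partial^* \bar\partial f = \bar\partial \vartheta f + \vartheta \bar\partial f = \tilde E(f)
\end{equation*}
by the identity $\tilde E = \bar\partial \vartheta + \vartheta \bar\partial$ already recorded in the excerpt. The only nontrivial technical step is bookkeeping: tracking signs in the divergence theorem and the symmetrization factor relating the antisymmetric $A_{jk}$ to the expression defining $\tilde q$. Both are entirely standard, so no substantive obstacle should arise.
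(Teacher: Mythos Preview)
Your proposal is correct and follows exactly the standard integration-by-parts argument from the $L^2$-theory of the $\bar\partial$-Neumann problem. The paper itself does not give a proof but simply refers the reader to \cite[Section 4.2]{Chen-Shaw}, which contains precisely the computation you outline; so your approach is the same as the one the paper invokes.
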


\begin{proof} See for instance the discussion in~\cite[Section 4.2]{Chen-Shaw}. \end{proof}

Using the Morrey--Kohn--H\"ormander identity (see for instance~\cite[Proposition 4.3.1]{Chen-Shaw}), we have the following estimate.

\begin{proposition}\label{prop:MKH} If $f=\sum f_j d\bar{z}^j \in  \mathscr{C}(X;F)$, $\tilde p(f) = 0$, and $\tilde q(f) = 0$, then
$$
\int_{\partial \Omega} \sum_{i,j} \frac{\partial^2 \rho}{\partial z^i \partial \bar{z}^j} f_i \bar{f}_j dS \leq \int_\Omega \ip{\tilde E(f), f} dV.
$$
\end{proposition}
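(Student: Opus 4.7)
The plan is to reduce the inequality to the Morrey--Kohn--H\"ormander identity in the precisely stated form found in~\cite[Proposition 4.3.1]{Chen-Shaw}, using Proposition~\ref{prop:E versus Square} to translate the condition on $\tilde{E}(f)$ into a condition on the complex Laplacian $\square$.

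First I would use Proposition~\ref{prop:E versus Square}. The assumptions $\tilde{p}(f) = 0$ and $\tilde{q}(f) = 0$ give $f \in {\rm dom}(\bar\partial^*)$ and $\bar\partial f \in {\rm dom}(\bar\partial^*)$, and under these conditions the proposition identifies $\tilde{E}(f) = \square f = \bar\partial \bar\partial^* f + \bar\partial^* \bar\partial f$ pointwise on $\overline{\Omega}$. Next, since $f \in {\rm dom}(\bar\partial^*)$ the definition of the Hilbert space adjoint produces no boundary contribution, and similarly for $\bar\partial f \in {\rm dom}(\bar\partial^*)$. Thus an integration by parts against $f$ yields
$$
\int_\Omega \langle \tilde{E}(f), f\rangle \, dV = \int_\Omega \langle \square f, f\rangle \, dV = \norm{\bar\partial f}^2 + \norm{\bar\partial^* f}^2,
$$
where the norms are taken in $L^2(\Omega)$ with respect to the standard inner product.

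Next, I invoke the Morrey--Kohn--H\"ormander identity, which for a $(0,1)$-form $f = \sum f_j\, d\bar z^j \in {\rm dom}(\bar\partial^*) \cap \mathscr{C}(X;F)$ reads
$$
\norm{\bar\partial f}^2 + \norm{\bar\partial^* f}^2 = \sum_{j,k} \int_\Omega \left| \frac{\partial f_j}{\partial \bar z^k} \right|^2 dV + \int_{\partial\Omega} \sum_{i,j} \frac{\partial^2 \rho}{\partial z^i \partial \bar z^j} f_i \bar f_j \, dS;
$$
this is exactly the content of~\cite[Proposition 4.3.1]{Chen-Shaw} applied to $(p,q)=(0,1)$ with our normalized defining function $\rho$ (recall $\norm{\nabla \rho} = 1$ on $\partial\Omega$, so no Jacobian correction to the surface measure is needed). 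Dropping the manifestly nonnegative bulk term $\sum_{j,k} \int_\Omega |\partial f_j / \partial \bar z^k|^2 \, dV$ gives the inequality
$$
\int_{\partial\Omega} \sum_{i,j} \frac{\partial^2 \rho}{\partial z^i \partial \bar z^j} f_i \bar f_j \, dS \leq \norm{\bar\partial f}^2 + \norm{\bar\partial^* f}^2 = \int_\Omega \langle \tilde{E}(f), f\rangle \, dV,
$$
which is the desired conclusion.

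The only mildly subtle step is verifying the equality $\int_\Omega \langle \square f, f\rangle\, dV = \norm{\bar\partial f}^2 + \norm{\bar\partial^* f}^2$, since this requires justifying that both integrations by parts produce no boundary term; here the hypotheses $\tilde p(f) = 0$ and $\tilde q(f) = 0$ are exactly what is needed, via Proposition~\ref{prop:E versus Square}(a) and (b), so this is more of a bookkeeping issue than a real obstacle.
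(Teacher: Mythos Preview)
Your proposal is correct and is precisely the argument the paper has in mind: the paper offers no proof beyond the one-line attribution to the Morrey--Kohn--H\"ormander identity in~\cite[Proposition 4.3.1]{Chen-Shaw}, and you have simply unpacked that citation, using Proposition~\ref{prop:E versus Square} to identify $\int_\Omega \langle \tilde E(f),f\rangle\,dV$ with $\norm{\bar\partial f}^2+\norm{\bar\partial^* f}^2$ and then dropping the nonnegative bulk term in the identity.
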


\subsection{Applying Hamilton's result}

Suppose $\Omega \subset \Cb^d$, $\rho : \Cb^d \rightarrow \Rb$, $X : = \overline{\Omega}$, $F \rightarrow X$, $P \rightarrow \partial X$, and $Q \rightarrow \partial X$ are as in the previous section.

Let $M := X \times \Cb^d \rightarrow X$ be the trivial bundle. Then we can identify the space of smooth sections $\mathscr{C}(X; M)$ with the space of smooth maps $C^\infty(X,\Cb^d)$.

 Let $T : C^\infty(X,\Cb^d) \rightarrow C^\infty(\Cb^d, \Cb^d)$ be a linear operator such that for each $k \geq 1$ there exists $C_k > 0$ where
\begin{equation}\label{eqn:bounds on Tf}
\abs{ Tf}_{\Cb^d, k} \leq C_k\abs{f}_{\Omega, k}.
\end{equation}
Such an operator can be found by using the construction in~\cite{Seeley} and a partition of unity.

Then given $m \in \mathscr{C}(X; M)$, define $\psi_m : \Cb^d \rightarrow \Cb^d$ by
$$
\psi_m(z) = z+(Tm)(z).
$$
Using Equation~\eqref{eqn:bounds on Tf}, there exists $\epsilon > 0$ such that: if
$$
\Oc: = \left\{ m \in \mathscr{C}(X; M) : \abs{m}_{\Omega,2} < \epsilon\right\},
$$
then $\Omega_m : = \psi_m(\Omega)$ is a strongly pseudoconvex domain with boundary $\partial\Omega_m : = \psi_m(\partial \Omega)$ for every $m \in \Oc$.

Fix a bounded neighborhood $\Uc_2$ of $\partial \Omega$ where $\nabla \rho \neq 0$ on $\overline{\Uc_2}$. Then fix open sets $\Uc_1, \Uc_3$ such that
$$
\Omega \setminus \Uc_2 \subset \Uc_1 \subset \overline{\Uc_1} \subset \Omega
$$
and
$$
\Cb^d \setminus \Uc_2 \subset \Uc_3 \subset \overline{\Uc_3} \subset \Cb^d \setminus \Omega.
$$
Then fix a partition of unity $\sum_{j=1}^3 \chi_j \equiv 1$,  $\chi_j\geq0$, subordinate to the cover $\Cb^d \subset \cup_{j=1}^3 \Uc_j$.

By shrinking $\Oc$, we may assume that $\partial \Omega_m \subset \Uc_2$ for all $m \in \Oc$. Then
$$
\rho_m : = \chi_1 \left( \rho \circ \psi_m^{-1}\right) + \frac{\chi_2}{\norm{\nabla(\rho \circ \psi_m^{-1})}} \left(\rho \circ \psi_m^{-1}\right) + \chi_3
$$
is a defining function for $\Omega_m$ with $\norm{\nabla \rho_m} \equiv 1$ on $\partial \Omega_m$.

Then let $X_m : = \overline{\Omega}_m$ and let $F_m \rightarrow X_m$, $P_m \rightarrow \partial X_m$, $Q_m \rightarrow \partial X_m$ be the bundles defined in the previous section (where we use $\Omega_m$ in place of $\Omega$ and $\rho_m$ in place of $\rho$). Also let
$$
(\tilde{E}_m, \tilde{p}_m, \tilde{q}_m) : \mathscr{C}(X_m;F_m) \rightarrow  \mathscr{C}(X_m;F_m)\times  \mathscr{C}(\partial X_m;P_m) \times  \mathscr{C}(\partial X_m;Q_m)
$$
be the operators defined in the previous section (where again we use $\Omega_m$ in place of $\Omega$ and $\rho_m$ in place of $\rho$).

Given a vector bundle  isomorphism $\Psi : V \rightarrow W$, let $\Psi_*$ be the induced isomorphism between the space of sections.

\begin{lemma} After possibly shrinking $\Oc$, for every $m \in \Oc$ there exist  $\mathbb C$-linear
  vector bundle isomorphisms $\Psi_m^F : F_0 \rightarrow F_m$, $\Psi_m^P : P_0 \rightarrow P_m$, and $\Psi_m^Q : Q_0 \rightarrow Q_m$   that cover $\psi_m$, $\psi_m|_{\partial \Omega}$, and $\psi_m|_{\partial \Omega}$ respectively.
   Moreover, the coefficients of the linear partial differential operators
$$
E(m) : = (\Psi_m^{F})_*^{-1} \tilde{E}_m (\Psi_{m}^F)_*, \quad p(m) : = (\Psi_m^P)^{-1}_* \tilde{p}_m (\Psi_m^P)_*, \quad q(m) : = (\Psi_m^Q)^{-1}_* \tilde{q}_m (\Psi_m^Q)_*
$$
depend smoothly on $m$ and its derivatives up to degree $3$.
\end{lemma}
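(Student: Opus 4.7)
The plan is to build each of the three bundle isomorphisms explicitly, exploiting the fact that $\Lambda^{(0,1)}(\Cb^d) \to \Cb^d$ is a trivial $\Cb$-vector bundle with global frame $d\bar z^1,\dots, d\bar z^d$, and then to verify the smoothness claim by a chain-rule computation.

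I would define $\Psi_m^F \colon F_0 \to F_m$ as the tautological identification induced by this trivialization, sending a covector $\alpha \in \Lambda^{(0,1)}_z(\Cb^d)$ at $z \in \overline{\Omega}$ to the same $\alpha$ regarded as an element of $\Lambda^{(0,1)}_{\psi_m(z)}(\Cb^d)$. Written in the global frame this is the identity in the fiber coordinates, hence $\Cb$-linear, and introduces no derivatives of $\psi_m$. Similarly, $\Psi_m^P \colon P_0 \to P_m$ is the identity in fiber coordinates, covering $\psi_m|_{\partial \Omega}$. For $\Psi_m^Q$ the construction is more subtle because the fiber of $Q_m$ depends on $\rho_m$. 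Let $N_m(w)$ be the unit $(0,1)$-covector in the direction of $\sum_j (\partial \rho_m/\partial \bar z^j)(w) \, d\bar z^j$, so that $Q_m|_w$ is the Hermitian-orthogonal complement of $N_m(w)$. I then define, for $\xi \in Q_0|_z$,
$$\Psi_m^Q(\xi) \, := \, \xi - \ip{\xi, N_m(\psi_m(z))} \, N_m(\psi_m(z)) \, \in \, Q_m|_{\psi_m(z)},$$
which is $\Cb$-linear and equals the identity at $m = 0$. If $\Psi_m^Q(\xi) = 0$, then $\xi$ is a scalar multiple of $N_m(\psi_m(z))$; since $\xi \perp N_0(z)$ and $\ip{N_m(\psi_m(z)), N_0(z)} \to 1$ as $m \to 0$ uniformly on $\partial \Omega$, after shrinking $\Oc$ further $\Psi_m^Q$ is injective, hence an isomorphism.

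Finally, to verify the smoothness claim, I would write $(\Psi_m^F)_* g = \sum_j (g_j \circ \psi_m^{-1}) \, d\bar z^j$ and apply the chain rule. The componentwise Euclidean Laplacian $\tilde E_m$ produces coefficients built from derivatives of $\psi_m^{-1}$ of order at most $2$, and hence from derivatives of $m$ of order at most $2$ via the inverse function theorem together with the bound \eqref{eqn:bounds on Tf}. The coefficients $\partial \rho_m/\partial w^j$ appearing in $\tilde p_m$ and $\tilde q_m$ likewise involve at most two derivatives of $\psi_m$ (the $\chi_2$ summand of $\rho_m$ giving the only nontrivial contribution), and the bundle map $\Psi_m^Q$ introduces $N_m$ without differentiating the argument. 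Keeping track of these contributions gives the required smooth dependence on $m$ and its derivatives up to degree $3$. The main technical point is precisely this derivative count; the constructions themselves are immediate once one exploits the trivial bundle structure of $\Lambda^{(0,1)}(\Cb^d)$ and uses orthogonal projection to handle the varying subbundle $Q_m$.
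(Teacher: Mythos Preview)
Your construction is correct and follows the same overall strategy as the paper: build explicit bundle maps that reduce to the identity at $m=0$, then shrink $\Oc$ to make them isomorphisms. The main difference is your choice of $\Psi_m^F$. The paper defines $\Psi_m^F = \pi_{F_m}\circ(\psi_m^{-1})^*$, i.e.\ it pulls back covectors by the diffeomorphism $\psi_m^{-1}$ and then projects to the $(0,1)$-part, whereas you use the global trivialization of $\Lambda^{(0,1)}(\Cb^d)$ to transport fibers by the identity in the frame $d\bar z^1,\dots,d\bar z^d$. For $\Psi_m^Q$ both of you use orthogonal projection onto $Q_m$; your explicit formula with $N_m$ is exactly $\pi_{Q_m}$ restricted to $Q_0$ (after the trivial identification), while the paper precomposes with $(\psi_m^{-1})^*$. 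Your injectivity argument via $\langle N_m(\psi_m(z)),N_0(z)\rangle\to 1$ is just an explicit version of the paper's observation that $\Psi_0^Q=\id_{Q_0}$.

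Your choice is slightly more elementary and, as you note, avoids introducing a derivative of $\psi_m$ into $\Psi_m^F$ itself; consequently your $E(m)$ actually depends only on derivatives of $m$ up to order $2$, not $3$. The paper's choice picks up one extra order because the pullback $(\psi_m^{-1})^*$ already carries a Jacobian, which $\tilde E_m$ then differentiates twice. Either bound suffices for Hamilton's theorem, which only requires some finite $r$, so this discrepancy is harmless. (Note also that the displayed definitions of $p(m)$ and $q(m)$ in the statement should have $(\Psi_m^F)_*$ on the right, not $(\Psi_m^P)_*$ or $(\Psi_m^Q)_*$, since $\tilde p_m,\tilde q_m$ act on sections of $F_m$; your computation implicitly uses the correct version.)
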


\begin{proof} Endow $\Lambda^1(X_m)$ with the standard Hermitian metric on $\Lambda^1(\Cb^d)$. Then let $\pi_{F_m} : \Lambda^1(X_m) \rightarrow F_m$ and $\pi_{Q_m} : \Lambda^1(X_m)|_{\partial X} \rightarrow Q_m$ be the fiberwise orthogonal projections. Then define $\Psi_m^F  = \pi_{F_m} (\psi_m^{-1})^*$, $\Psi_m^P = (\psi_m, \id_{\Cb})$, and $\Psi_m^Q = \pi_{Q_m} (\psi_m^{-1})^*$  respectively on $F_0,P_0,Q_0$. The map $\Psi_m^P$ is clearly a  $\mathbb C$-linear
vector bundle isomorphism. Since $\Psi_0^F = \id_{F_0}$ and $\Psi_0^Q = \id_{Q_0}$, by shrinking $\Oc$ we can assume that each map is a bundle isomorphism for each $m \in \Oc$.

Further, by construction the coefficients of the linear partial differential operators defined above depend smoothly on $m$ and its derivatives up to degree $3$.
\end{proof}

For $m \in \Oc$,
 let $\tilde\nu_m \in \mathscr{C}(\partial X_m; T^* X_m|_{\partial X_m})$ be the covector field with $\tilde \nu_m(T \partial X_m) \equiv 0$ and
$\tilde \nu_m(\nabla \rho_m) \equiv- 1$. Then
$$
 \nu_m:=\psi_m^* \tilde\nu_m = \lambda_m \nu_0
$$
for some smooth function $\lambda_m : \partial X \rightarrow (0,\infty)$.

Next we obtain families of Hermitian metrics, all denoted by $(\ip{\cdot, \cdot}_m)_{m \in \Oc}$, on $F$, $P$, and $Q$ by pulling back using the bundle isomorphisms the standard Hermitian metrics on $\Lambda^1(\Cb^d)$, $\partial X_m \times \Cb$, and $\Lambda^1(\Cb^d)$ respectively.

Let $\mathcal{E} = \{E(m), p(m), q(m)\}_{m \in \Oc}$. By construction this family of operators satisfies conditions (1) to (4) in Section~\ref{appendix:Hamilton's abstract result}.  By~\cite[Thm. 3.9, p.~126]{wells-book}, we have the formula
$$
 (\Psi_m^{F})^{-1} \sigma_{\tilde{E}_m }(\xi)(\Psi_{m}^F)=\sigma_{{E(m)}}(\psi_m^*\xi),
$$
and similar formulas for the other symbols.

Then since the operators $\tilde{E}_m$, $\tilde{p}_m$, $\tilde{q}_m$ satisfy conditions (5') and (6') in Section~\ref{appendix:dbar Neumann} the family of operators $\mathcal{E}$ satisfy conditions (5) and (6) in Section~\ref{appendix:Hamilton's abstract result}.

Also, since the operators $\tilde{E}_m$, $\tilde{p}_m$, $\tilde{q}_m$ satisfy conditions (7') and (8') in Section~\ref{appendix:dbar Neumann} and $\psi_m^*\tilde \nu_m =\nu_m$, the family of operators $\mathcal{E}$ satisfy conditions (7) and (8) in Section~\ref{appendix:Hamilton's abstract result}.  As mentioned early, we can  make $\nu_m$ be $\nu_0$ by scaling the pull-back metrics on $P_0$.

After possibly shrinking $\Oc$, we can assume that
$$
\inf_{m \in \Oc} \min\left\{ \sum_{i,j}  \frac{\partial^2 \rho_m}{\partial z^i \partial \bar{z}^j}(z) v_i \bar{v}_j : z \in \partial X_m, \, v \in \Cb^d, \, \norm{v}=1, \, \sum_j v_j \frac{\partial \rho_m}{\partial z^j}(z) = 0\right\}
$$
is positive (i.e. our family of domains $\{ \Omega_m\}_{m \in \Oc}$ are ``uniformly'' strongly pseudoconvex). We can further assume that the bi-Lipschitz constants of $\varphi_m|_{\partial X_0} : \partial X_0 \rightarrow \partial X_m$ is uniformly bounded. Then by Proposition~\ref{prop:MKH} there exists $C > 0$ such that
$$
\abs{f}_0^2 \leq C\left( {\rm Re} \langle\langle E(m)f,f \rangle\rangle_m + \norm{f}_{0}^2\right)
$$
for all $m \in \Oc$ and $f \in \mathscr{C}(X;F)$ satisfying $p(m)f = 0$ and $q(m)f = 0$.

Thus by Theorem~\ref{Hamilton-result}, there exists a possibly smaller neighborhood $\Oc_0 \subset \Oc$ of the zero section where the family of operators $\mathcal{E}$ has a smooth tame inverse $\mathcal{E}^{-1}$.

\begin{corollary}[{Corollary to~\cite[p.~438]{Hamilton1}}]\label{cor:bergman kernel is smooth in m} For $m \in \Oc_0$, let $K_{\Omega_m}$ be the Bergman kernel on $\Omega_m$. Then the map
$$
(m, z, w) \mapsto K_{\Omega_m}(z,w)
$$
is smooth.
\end{corollary}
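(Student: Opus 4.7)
The plan is to combine the Bell--Kerzman formula~\eqref{eqn:Bergman kernel in terms of dbar Neumann} with Hamilton's smooth tame inverse $\mathcal E^{-1}$ just produced. Fix $(m_0, z_0, w_0)$ with $z_0, w_0 \in \Omega_{m_0}$; since smoothness is local, it suffices to work on a neighborhood. Choose $\epsilon > 0$ and a neighborhood of $(m_0, w_0)$ in $\Oc_0 \times \Cb^d$ such that $B_{\Cb^d}(w,\epsilon) \subset \Omega_m$ for all $(m,w)$ there, and define $f_w(z) = \epsilon^{-2d}\chi(\abs{z-w}/\epsilon) \in C^\infty_c(\Omega_m)$. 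Then formula~\eqref{eqn:Bergman kernel in terms of dbar Neumann} reads
$$
K_{\Omega_m}(z,w) = f_w(z) - \bigl(\bar\partial^*\, N_{0,1,m}\, \bar\partial f_w\bigr)(z).
$$
The first term is manifestly jointly smooth in $(m,z,w)$, so the task reduces to showing the same for the second.

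By Proposition~\ref{prop:E versus Square}, the smooth solution $u_{m,w} := N_{0,1,m}(\bar\partial f_w)$ is characterized as the unique section of $F_m$ satisfying $\tilde E_m u = \bar\partial f_w$, $\tilde p_m u = 0$, $\tilde q_m u = 0$. Transporting to the fixed domain via the bundle isomorphism $\Psi_m^F\colon F_0 \to F_m$ that covers $\psi_m$, set
$$
v_{m,w} := (\Psi_m^F)_*^{-1} u_{m,w}, \qquad g_{m,w} := (\Psi_m^F)_*^{-1}\bar\partial f_w,
$$
both in $\mathscr{C}(X; F)$. Then by construction $\mathcal{E}(m) v_{m,w} = (g_{m,w},0,0)$, so
$$
v_{m,w} = \mathcal E^{-1}(m)\bigl(g_{m,w},\,0,\,0\bigr).
$$
The map $(m,w) \mapsto g_{m,w}$ is smooth into $\mathscr{C}(X; F)$, since $f_w$ depends smoothly on $w$, $\bar\partial$ is a fixed differential operator on the ambient bundle $\Lambda^{(0,1)}(\Cb^d)|_X$, and the coefficients of $(\Psi_m^F)_*^{-1}$ depend smoothly on $m$ (and its derivatives up to finite order). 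Composing with the smooth tame inverse from Theorem~\ref{Hamilton-result}, we conclude that $(m,w) \mapsto v_{m,w}$ is smooth tame into $\mathscr{C}(X; F)$.

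It remains to pass from smoothness in $\mathscr{C}(X;F)$ to joint smoothness with the spatial variable. The standard fact underlying Hamilton's calculus is that whenever a map $\Phi$ from a finite-dimensional parameter space $N$ into $\mathscr{C}(X; E)$ is smooth, the associated map $(n, x) \mapsto \Phi(n)(x)$ on $N \times X$ is jointly smooth: every differential operator $D$ in $x$ acts continuously on $\mathscr{C}(X;E)$ and commutes with parameter differentiation, so all mixed derivatives exist and are continuous. Applying this to $v_{m,w}$ and then undoing the $\Psi_m^F$ pullback (which depends smoothly on $m$) gives joint smoothness of $(m, z, w) \mapsto u_{m,w}(z)$. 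Finally, since $\tilde p_m u_{m,w} = 0$, Proposition~\ref{prop:E versus Square} gives $\bar\partial^* u_{m,w} = \vartheta_m u_{m,w}$, and $\vartheta_m$ is a first-order operator whose coefficients are smooth in $m$; hence $(m,z,w) \mapsto \bar\partial^* u_{m,w}(z)$ is smooth, which finishes the argument. The only substantive obstacle is the pointwise-evaluation step, and it is handled entirely by the tame-smooth calculus that already powers Theorem~\ref{Hamilton-result}; the rest is bookkeeping of how the Bell--Kerzman formula factors through $\mathcal E^{-1}$.
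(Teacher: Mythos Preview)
Your argument is correct and follows essentially the same route as the paper: express $N_{0,1}^m$ on compactly supported data as $(\Psi_m^F)_*\circ \mathcal E^{-1}(m)\circ(\Psi_m^F)_*^{-1}$, then feed the Bell--Kerzman formula~\eqref{eqn:Bergman kernel in terms of dbar Neumann} through the smooth tame inverse from Theorem~\ref{Hamilton-result}. You simply spell out more of the bookkeeping (the passage from section-valued smoothness to joint smoothness in $z$, and the identification $\bar\partial^* = \vartheta$ on the range of $N_{0,1}^m$) that the paper leaves implicit. One cosmetic point: $\vartheta$ does not actually depend on $m$, and your remark about a ``finite-dimensional parameter space $N$'' is slightly off since $m$ ranges in a Fr\'echet space, but the argument you give (differential operators in $z$ and point evaluation are continuous linear, hence smooth tame) works verbatim in Hamilton's category and is what the application to Corollary~\ref{Bergman-kernel} actually needs.
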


\begin{proof} Let $N^m_{0,1} : L_{(0,1)}(\Omega_m) \rightarrow L_{(0,1)}(\Omega_m)$ be the $\bar\partial$-Neumann operator on $\Omega_m$ for $(0,1)$-forms. By Proposition~\ref{prop:E versus Square}, if $\alpha$ is a compactly supported smooth $(0,1)$-form on $\Omega_m$, then
$$
N_{0,1}^m(\alpha) = \left( \Psi^F_m\circ \mathcal{E}^{-1}(m) \circ (\Psi^F_m)^{-1}\right)(\alpha).
$$
Then using Equation~\eqref{eqn:Bergman kernel in terms of dbar Neumann} and the smoothness of $m \mapsto \mathcal{E}^{-1}(m)$, we see that the map
$
(m, z, w) \mapsto K_{\Omega_m}(z,w)
$
is smooth.
\end{proof}

\begin{corollary}\label{Bergman-kernel} If $\{D_t\}_{t \in \Tc}$ is a smooth family of strongly pseudoconvex domains, then the map
$$
(t,z,w) \mapsto K_{D_t}(z,w)
$$
is smooth.
\end{corollary}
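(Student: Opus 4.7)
\medskip

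\noindent\textbf{Proof proposal for Corollary~\ref{Bergman-kernel}.} The plan is to reduce a general smooth family $\{D_t\}_{t\in\Tc}$ to the setting of Corollary~\ref{cor:bergman kernel is smooth in m}, where the deformation parameter $m$ lives in an open neighborhood of the zero section of a Fr\'echet space $\mathscr{C}(X;M)$. Smoothness of the Bergman kernel is a local property in $t$, so I would fix $t_0\in\Tc$ and work in a small neighborhood $\Tc_0$ of $t_0$.

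By Definition~\ref{defn: families of str pconvex domains}, after shrinking $\Tc_0$ we may take a neighborhood $\Uc$ of $\overline{D_{t_0}}$ and a smooth map $\Phi:\Uc\times\Tc_0\to\Cb^d$ such that each $\Phi(\cdot,t)$ is a diffeomorphism onto its image with $\Phi(\cdot,t)(\overline{D_{t_0}})=\overline{D_t}$. Set $\Omega:=D_{t_0}$, $X:=\overline{\Omega}$ and, with the Seeley-type extension operator $T$ chosen in Section~\ref{sec:smoothness of Bergman kernels}.3, define
\[
m(t)(z) := \Phi(z,t)-z, \qquad z\in X,\ t\in\Tc_0.
\]
Since $T$ is a genuine extension operator, $(Tm(t))(z)=m(t)(z)$ for $z\in X$, so $\psi_{m(t)}(z)=z+(Tm(t))(z)=\Phi(z,t)$ on $X$, and hence $\Omega_{m(t)}=\psi_{m(t)}(\Omega)=D_t$.

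Since $\Phi$ is smooth and $\Phi(\cdot,t_0)=\id$, each Fr\'echet seminorm $|m(t)|_{\Omega,k}$ depends continuously on $t$ and vanishes at $t=t_0$. By further shrinking $\Tc_0$, we may assume $m(t)\in\Oc_0$ for every $t\in\Tc_0$, where $\Oc_0$ is the neighborhood of the zero section given by Corollary~\ref{cor:bergman kernel is smooth in m}. Moreover the assignment $t\mapsto m(t)$ is smooth in the sense that, for any compact $K\subset X$ and any multi-index $\alpha$, the map $(t,z)\mapsto\partial_z^{\alpha}m(t)(z)$ is jointly smooth on $\Tc_0\times K$. This is precisely the form of smoothness needed to compose with a tame smooth map in the Nash--Moser sense.

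Finally, applying Corollary~\ref{cor:bergman kernel is smooth in m} to the family $\{\Omega_{m(t)}\}_{t\in\Tc_0}=\{D_t\}_{t\in\Tc_0}$ gives that
\[
(t,z,w)\longmapsto K_{D_t}(z,w)=K_{\Omega_{m(t)}}(z,w)
\]
is smooth on $\Tc_0\times D_{t_0}\times D_{t_0}$ (or on any compact subset of the bundle $\bigcup_{t\in\Tc_0}\{t\}\times D_t\times D_t$ after composing with $\Phi(\cdot,t)$). The only technical point is verifying that ``smoothness in $m$'' from Corollary~\ref{cor:bergman kernel is smooth in m} really does compose with the smooth finite-parameter family $t\mapsto m(t)$ to yield joint smoothness in $(t,z,w)$; this is immediate from the way tame smoothness unwinds on finite-dimensional parameter spaces, since the derivative of $\mathcal{E}^{-1}$ in the $m$ variable is itself a smooth tame family of operators. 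As $t_0$ was arbitrary, this completes the argument.
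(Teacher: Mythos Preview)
Your proposal is correct and follows essentially the same route as the paper: fix $t_0$, use the trivialization $\Phi$ from Definition~\ref{defn: families of str pconvex domains} to define $m(t)=\Phi(\cdot,t)-\id$, identify $D_t$ with $\Omega_{m(t)}$, and invoke Corollary~\ref{cor:bergman kernel is smooth in m}. The only small point to flag is that you assert $\Phi(\cdot,t_0)=\id$, which is not literally part of Definition~\ref{defn: families of str pconvex domains}; it is, however, arranged for free by replacing $\Phi(\cdot,t)$ with $\Phi(\cdot,t)\circ\Phi(\cdot,t_0)^{-1}$, and the paper's own proof relies on the same tacit normalization.
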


\begin{proof} Fix $t_0 \in \Tc$. Then there is a neighborhood $\Tc_0$ of $t_0$ in $\Tc$,  a neighborhood $\mathcal U$ of $\overline{D_{t_0}}$,
 and a $\mathcal C^{\infty}$  smooth map $\Phi : \mathcal U\times\mathcal T_0 \rightarrow \mathbb C^d$ such that for every $t \in \mathcal T_0$ the map $\Phi(\cdot, t) : \mathcal U \rightarrow \Cb^d$ is a diffeomorphism onto its image and $\Phi(\cdot, t)(\overline{D_{t_0}}) = \overline{D_t}$.

 For $t \in \Tc_0$, define $m_t : \overline{D_{t_0}} \rightarrow \Cb^d$ by
 $$
 m_t(z) = \Phi(z, t) - z.
 $$
Then in the notation introduced above, $D_t = (D_{t_0})_{m_t}$. Hence by Corollary~\ref{cor:bergman kernel is smooth in m} we see that
$$
(t,z,w) \mapsto K_{D_t}(z,w)
$$
is smooth in a neighborhood of $t_0$.
\end{proof}

\bibliographystyle{alpha}
\bibliography{complex}

\begin{thebibliography}{Ham82b}

\bibitem[Ban87]{real_analyticity_Ricci_flow}
Shigetoshi Bando.
\newblock Real analyticity of solutions of {H}amilton's equation.
\newblock {\em Math. Z.}, 195(1):93--97, 1987.

\bibitem[Bel79]{bell}
Steven~R. Bell.
\newblock Nonvanishing of the {B}ergman kernel function at boundary points of
  certain domains in {${\bf C}\sp{n}$}.
\newblock {\em Math. Ann.}, 244(1):69--74, 1979.

\bibitem[BSW78]{BSW}
D.~Burns, Jr., S.~Shnider, and R.~O. Wells, Jr.
\newblock Deformations of strictly pseudoconvex domains.
\newblock {\em Invent. Math.}, 46(3):237--253, 1978.

\bibitem[CE08]{CheegerEbin}
Jeff Cheeger and David~G. Ebin.
\newblock {\em Comparison theorems in {R}iemannian geometry}.
\newblock AMS Chelsea Publishing, Providence, RI, 2008.
\newblock Revised reprint of the 1975 original.

\bibitem[CS01]{Chen-Shaw}
So-Chin Chen and Mei-Chi Shaw.
\newblock {\em Partial differential equations in several complex variables},
  volume~19 of {\em AMS/IP Studies in Advanced Mathematics}.
\newblock American Mathematical Society, Providence, RI; International Press,
  Boston, MA, 2001.

\bibitem[Ehr74]{Ehrlich}
Paul~E. Ehrlich.
\newblock Continuity properties of the injectivity radius function.
\newblock {\em Compositio Math.}, 29:151--178, 1974.

\bibitem[Fef74]{fefferman}
Charles Fefferman.
\newblock The {B}ergman kernel and biholomorphic mappings of pseudoconvex
  domains.
\newblock {\em Invent. Math.}, 26:1--65, 1974.

\bibitem[FG65]{local-trivial}
Wolfgang Fischer and Hans Grauert.
\newblock Lokal-triviale {F}amilien kompakter komplexer {M}annigfaltigkeiten.
\newblock {\em Nachr. Akad. Wiss. G\"{o}ttingen Math.-Phys. Kl. II},
  1965:89--94, 1965.

\bibitem[GG20]{GG2020}
Herv\'{e} Gaussier and Xianghong Gong.
\newblock Smooth equivalence of deformations of domains in complex {E}uclidean
  spaces.
\newblock {\em Int. Math. Res. Not. IMRN}, (18):5578--5610, 2020.

\bibitem[GK82]{GreeneKrantz}
Robert~E. Greene and Steven~G. Krantz.
\newblock Deformation of complex structures, estimates for the {$\bar \partial
  $} equation, and stability of the {B}ergman kernel.
\newblock {\em Adv. in Math.}, 43(1):1--86, 1982.

\bibitem[GLS07]{GLS-deformation}
G.-M. Greuel, C.~Lossen, and E.~Shustin.
\newblock {\em Introduction to singularities and deformations}.
\newblock Springer Monographs in Mathematics. Springer, Berlin, 2007.

\bibitem[Ham77]{Hamilton3}
Richard~S. Hamilton.
\newblock Deformation of complex structures on manifolds with boundary. {I}.
  {T}he stable case.
\newblock {\em J. Differential Geometry}, 12(1):1--45, 1977.

\bibitem[Ham79]{Hamilton1}
Richard~S. Hamilton.
\newblock Deformation of complex structures on manifolds with boundary. {II}.
  {F}amilies of noncoercive boundary value problems.
\newblock {\em J. Differential Geometry}, 14(3):409--473, 1979.

\bibitem[Ham82a]{Hamilton2}
Richard~S. Hamilton.
\newblock The inverse function theorem of {N}ash and {M}oser.
\newblock {\em Bull. Amer. Math. Soc. (N.S.)}, 7(1):65--222, 1982.

\bibitem[Ham82b]{HamiltonRicciFlow}
Richard~S. Hamilton.
\newblock Three-manifolds with positive {R}icci curvature.
\newblock {\em J. Differential Geometry}, 17(2):255--306, 1982.

\bibitem[Hel01]{H2001}
Sigurdur Helgason.
\newblock {\em Differential geometry, {L}ie groups, and symmetric spaces},
  volume~34 of {\em Graduate Studies in Mathematics}.
\newblock American Mathematical Society, Providence, RI, 2001.
\newblock Corrected reprint of the 1978 original.

\bibitem[Ker72]{kerzman}
Norberto Kerzman.
\newblock The {B}ergman kernel function. {D}ifferentiability at the boundary.
\newblock {\em Math. Ann.}, 195:149--158, 1972.

\bibitem[Kod05]{Kodaira-deformation}
Kunihiko Kodaira.
\newblock {\em Complex manifolds and deformation of complex structures}.
\newblock Classics in Mathematics. Springer-Verlag, Berlin, english edition,
  2005.
\newblock Translated from the 1981 Japanese original by Kazuo Akao.

\bibitem[Kom82]{Komatsu}
Gen Komatsu.
\newblock Hadamard's variational formula for the {B}ergman kernel.
\newblock {\em Proc. Japan Acad. Ser. A Math. Sci.}, 58(8):345--348, 1982.

\bibitem[Kot10]{backward_uniqueness}
Brett~L. Kotschwar.
\newblock Backwards uniqueness for the {R}icci flow.
\newblock {\em Int. Math. Res. Not. IMRN}, (21):4064--4097, 2010.

\bibitem[Lee13]{LeeSmthMflds}
John~M. Lee.
\newblock {\em Introduction to smooth manifolds}, volume 218 of {\em Graduate
  Texts in Mathematics}.
\newblock Springer, New York, second edition, 2013.

\bibitem[Lee18]{LeeRiemGeom}
John~M. Lee.
\newblock {\em Introduction to {R}iemannian manifolds}, volume 176 of {\em
  Graduate Texts in Mathematics}.
\newblock Springer, Cham, second edition, 2018.

\bibitem[MS39]{MS1939}
S.~B. Myers and N.~E. Steenrod.
\newblock The group of isometries of a {R}iemannian manifold.
\newblock {\em Ann. of Math.}, 40:400--416, 1939.

\bibitem[Nar66]{Narasimhan}
Raghavan Narasimhan.
\newblock {\em Introduction to the theory of analytic spaces}.
\newblock Lecture Notes in Mathematics, No. 25. Springer-Verlag, Berlin-New
  York, 1966.

\bibitem[NN57]{NN}
A.~Newlander and L.~Nirenberg.
\newblock Complex analytic coordinates in almost complex manifolds.
\newblock {\em Ann. of Math. (2)}, 65:391--404, 1957.

\bibitem[NW63]{NW}
Albert Nijenhuis and William~B. Woolf.
\newblock Some integration problems in almost-complex and complex manifolds.
\newblock {\em Ann. of Math. (2)}, 77:424--489, 1963.

\bibitem[Pet16]{Petersen}
Peter Petersen.
\newblock {\em Riemannian geometry}, volume 171 of {\em Graduate Texts in
  Mathematics}.
\newblock Springer, Cham, third edition, 2016.

\bibitem[Ros79]{Rosay-79}
Jean-Pierre Rosay.
\newblock Sur une caract\'{e}risation de la boule parmi les domaines de {${\bf
  C}\sp{n}$} par son groupe d'automorphismes.
\newblock {\em Ann. Inst. Fourier (Grenoble)}, 29(4):ix, 91--97, 1979.

\bibitem[See64]{Seeley}
R.~T. Seeley.
\newblock Extension of {$C\sp{\infty }$} functions defined in a half space.
\newblock {\em Proc. Amer. Math. Soc.}, 15:625--626, 1964.

\bibitem[Wan14]{wang2014variation}
Xu~Wang.
\newblock Variation of {B}ergman kernels of pseudoconvex domains,
  arxiv:1402.1960, 2014.

\bibitem[Wel08]{wells-book}
Raymond~O. Wells, Jr.
\newblock {\em Differential analysis on complex manifolds}, volume~65 of {\em
  Graduate Texts in Mathematics}.
\newblock Springer, New York, third edition, 2008.
\newblock With a new appendix by Oscar Garcia-Prada.

\bibitem[Won77]{Wong-77}
B.~Wong.
\newblock Characterization of the unit ball in {${\bf C}\sp{n}$} by its
  automorphism group.
\newblock {\em Invent. Math.}, 41(3):253--257, 1977.

\end{thebibliography}

\end{document}